\documentclass{hha}



\usepackage{graphicx, paralist, indentfirst, amscd} 


\theoremstyle{plain}
\newtheorem{prop}[theorem]{Proposition}
\newtheorem{thm}[theorem]{Theorem}
\newtheorem{lem}[theorem]{Lemma}

\theoremstyle{remark}
\newtheorem*{rem*}{Remark}

\begin{document}

\newcommand{\bek}{\quad \\[-6pt] \indent}

\newcommand{\Ker}{\mathop{\mathrm{Ker}}\nolimits}
\newcommand{\rk}{\mathop{\mathrm{rk}}\nolimits}
\newcommand{\id}{\mathop{\mathrm{id}}\nolimits}
\newcommand{\pr}{\mathop{\mathrm{pr}}\nolimits}
\newcommand{\DIV}{\mathop{\mathrm{div}}\nolimits}
\newcommand{\interior}{\mathop{\mathrm{int}}\nolimits}
\newcommand{\Tor}{\mathop{\mathrm{Tor}}\nolimits}
\newcommand{\Tang}{\mathcal{T}}
\newcommand{\Homeo}{\mathop{\mathrm{Homeo}}\nolimits}
\newcommand{\Z}{\mathbb{Z}}
\newcommand{\C}{\mathbb{C}}
\newcommand{\R}{\mathbb{R}}
\newcommand{\T}{\mathbf{T}}
\renewcommand{\S}{\mathbf{S}}

\title{Sum of embedded submanifolds} 


\author{Csaba Nagy}             

\email{cnagy@student.unimelb.edu.au}

%
%
\address{School of Mathematics and Statistics,
         The University of Melbourne,
         Parkville, VIC, 3000,
         Australia}





\classification{57R40, 57R95.}

\keywords{embedded submanifolds.}

\begin{abstract}
In an $n$-manifold $X$ each element of $H_{n-1}(X; \Z_2)$ can be represented by an embedded codimension-1 submanifold. Hence for any two such submanifolds there is a third one that represents the sum of their homology classes. We construct such a representative explicitly. We describe the analogous construction for codimension-2 co-oriented submanifolds, and examine the special case of oriented and/or co-oriented submanifolds. We also give a lower bound for the number of connected components of the intersection of two oriented codimension-1 submanifolds in terms of the homology classes they represent.
\end{abstract}


\maketitle


\section{Introduction}

Let $X$ be a smooth ($C^{\infty}$) closed $n$-dimensional manifold, and let $Y_1$ and $Y_2$ be two smooth closed embedded submanifolds in $X$ that intersect each other transversally. We want to construct an embedded submanifold $Y$ that is the ``sum" of $Y_1$ and $Y_2$. By this we mean that if $Y_1$ and $Y_2$ represent homology classes $[Y_1]$ and $[Y_2]$, then $Y$ should represent $[Y_1]+[Y_2]$. More specifically, we want $Y$ to be an embedded approximation of $Y_1 \cup Y_2$, in the sense that it coincides with $Y_1 \cup Y_2$ outside a neighbourhood of $Y_1 \cap Y_2$ (this implies that $[Y] = [Y_1]+[Y_2]$, see Lemma \ref{lem:w} below).

\bek
Recall that a submanifold is co-oriented if its normal bundle is oriented. We will consider two cases: \\
\textbf{Case 1.} $Y_1$ and $Y_2$ are codimension-1 submanifolds \\
\textbf{Case 2.} $Y_1$ and $Y_2$ are codimension-2 co-oriented submanifolds 

\bek
In these cases the existence of a $Y$ with $[Y] = [Y_1]+[Y_2]$ follows immediately from a classical result of Thom ([3]). For Case 1 we use that all homology classes in $H_{n-1}(X; \Z_2)$ can be represented by submanifolds: By the Pontryagin-Thom construction there is a bijection between the set of codimension-$1$ embedded submanifolds (up to cobordism) and $[X, {\R}P^{\infty}]$, the set of homotopy classes of maps $X \rightarrow {\R}P^{\infty}$. Since ${\R}P^{\infty}$ is a $K(\Z_2, 1)$ Eilenberg-MacLane-space, this set is a group, and it is isomorphic to $H^1(X; \Z_2)$. Finally by Poincar\'e-duality $H^1(X; \Z_2) \cong H_{n-1}(X; \Z_2)$. The composition of these bijections maps (the cobordism class of) an embedded submanifold to the homology class it represents.

The situation is similar for codimension-2 co-oriented submanifolds: their cobordism classes are in bijection with $[X, {\C}P^{\infty}] \cong H^2(X; \Z) \cong H_{n-2}(X; \Z_w)$ ($\Z_w$ de\-notes local coefficients in the orientation $\Z$-bundle of $X$).

In fact, even the existence of an embedded approximation $Y$ of $Y_1 \cup Y_2$ can be shown easily. Clearly, if $Y_1$ and $Y_2$ are disjoint, then $Y$ can be chosen to be $Y_1 \cup Y_2$. If they intersect each other, then a transversality argument is needed. There are $\R^1$-bundles ($\C^1$-bundles in Case 2) $\eta_1$, $\eta_2$ over $X$, and sections $s_1$, $s_2$ thereof such that $s_i$ is zero exactly in the points of $Y_i$. Then $s_1 \otimes s_2$ is a section of $\eta_1 \otimes \eta_2$, and if we make it transversal to the zero-section, then its zeros will form a suitable $Y$.

So it is known that an embedded approximation $Y$ exists, but the proof of this is not constructive. It was the question of Matthias Kreck whether a $Y$ could be constructed explicitly from $Y_1$ and $Y_2$. In this paper we answer this question by describing such a construction.

We also consider additional requirements of orientability or co-orientability. In the case of oriented codimension-1 submanifolds this construction, combined with the results of Meeks--Patrusky ([1]) and Meeks ([2]), allows us to give a lower bound for the number of connected components of $Y_1 \cap Y_2$ in terms of the homology classes $[Y_1],[Y_2]$ (see Theorems \ref{thm:lb1} and \ref{thm:lb2}).

\subsection*{Overview of the construction}

We will formulate most of our statements for Case 2, the appropriate statements for Case 1 can be obtained by replacing $U(1)$, $\C$, $D^4$, $D^2$ and $S^1$ with $O(1)$, $\R$, $D^2$, $D^1$ and $S^0$ respectively (and sometimes a few other changes are needed, these will be indicated in \big[ \big] brackets).

Let $M = Y_1 \cap Y_2$, it is a codimension-4 \big[codimension-2\big] submanifold in $X$. Let $T$ be a tubular neighbourhood of $M$ in $X$. Then $T$ is diffeomorphic to the total space of a smooth $\left( U(1) \times U(1) \right)$-bundle over $M$ with fiber $D^4 \approx D^2 \times D^2$, and the action of the structure group on the fiber is given by $(\alpha, \beta)(x,y)=(\alpha(x), \beta(y))$, for all $(\alpha, \beta) \in U(1) \times U(1)$, $(x,y) \in D^2 \times D^2$. (The normal bundle of $M$ is the Whitney sum of the normal budles of $Y_1$ and $Y_2$, restricted to $M$.) This bundle $T \rightarrow M$ will be denoted by $\T$. 

The submanifolds $Y_1 \cap T \subset T$ and $Y_2 \cap T \subset T$ are the total spaces of subbundles of $\T$ with fiber $D^2 \times \{ 0 \}$ and  $\{ 0 \} \times D^2$ respectively. Their boundaries, $Y_i \cap \partial T$, will be denoted by $B_i$, these are subbundles with fiber $S^1 \times \{ 0 \}$ and $\{ 0 \} \times S^1$. Let $B = B_1 \bigsqcup B_2$. 

\begin{lem} \label{lem:w}
If $W \subset T$ is a co-oriented embedded submanifold with boundary $B$ (and its co-orientation agrees with that of $Y_1 \cup Y_2$ over $B$), and $Y = ((Y_1 \cup Y_2) \setminus T) \cup W$, then $[Y] = [Y_1]+[Y_2]$.
\end{lem}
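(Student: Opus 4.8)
The plan is to show that $Y$ and $Y_1 \cup Y_2$ are homologous by exhibiting an explicit chain whose boundary is their difference, working with $\Z_2$-coefficients in Case 1 and with (orientation-twisted) $\Z$-coefficients in Case 2. The key observation is that $Y$ and $Y_1 \cup Y_2$ agree outside the tubular neighbourhood $T$, so the difference of their fundamental cycles is supported in $T$; inside $T$ we have the two pieces $(Y_1 \cup Y_2) \cap T = (Y_1 \cap T) \cup (Y_2 \cap T)$ and $W$, which share the same boundary $B = B_1 \sqcup B_2$ (with matching co-orientations by hypothesis). So the first step is to make precise that $[Y]$ and $[Y_1 \cup Y_2]$ can be computed as sums of the fundamental classes of their components (using the co-orientations to get the twisted fundamental classes in Case 2, and noting $[Y_1 \cup Y_2] = [Y_1] + [Y_2]$ when $Y_1, Y_2$ are transverse so that the cycle $[Y_1] + [Y_2]$ is represented by the genuine subcomplex $Y_1 \cup Y_2$ after a small perturbation, or directly by general position).

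The second step is the geometric heart: inside $T$, the submanifold $(Y_1 \cap T) \cup (Y_2 \cap T)$ together with $-W$ (reversed co-orientation) forms a closed co-oriented ``cycle'' with boundary zero, since the boundaries $B_1 \sqcup B_2$ cancel. More usefully, $Z := (Y_1 \cap T) \cup_B (-W)$ is a closed co-oriented submanifold (or pseudo-manifold, since there may be corners along $B$ where $Y_1 \cap T$ and $Y_2 \cap T$ meet, but these can be smoothed) sitting inside the contractible-fiber bundle $T$. Since $T$ deformation retracts onto $M$, which has dimension $n-4$ (Case 2) or $n-2$ (Case 1), strictly less than $\dim Z = n-1$, we get $H_{n-1}(T; \Z_w) \cong H_{n-1}(M; \Z_w) = 0$ \big[$H_{n-1}(T; \Z_2) \cong H_{n-1}(M; \Z_2) = 0$\big]. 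Hence the class of $Z$ in $H_{n-1}(T; -)$ vanishes, so $Z$ bounds a chain $C$ in $T$. Pushing forward via $T \hookrightarrow X$ and adding the common exterior part $(Y_1 \cup Y_2) \setminus T$, this chain $C$ witnesses $[Y] - [Y_1 \cup Y_2] = 0$ in $H_{n-1}(X; -)$, which combined with $[Y_1 \cup Y_2] = [Y_1] + [Y_2]$ gives the claim.

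I would organise the write-up as: (i) reduce to showing $[Y] = [Y_1 \cup Y_2]$ in the appropriate homology; (ii) observe both cycles restrict to the same cycle on $X \setminus \interior T$; (iii) form the relative cycle $Z$ in $(T, \partial T)$ given by $Y \cap T = W$ versus $(Y_1 \cup Y_2) \cap T$ and note its image in absolute homology $H_{n-1}(T)$ is the class of a closed submanifold obtained by gluing along $B$; (iv) kill that class using $\dim M < n-1$ and the retraction $T \simeq M$; (v) map back to $X$ using the Mayer--Vietoris / excision comparison of the decompositions $X = \interior T \cup (X \setminus M)$ for $Y$ and for $Y_1 \cup Y_2$. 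Step (iv) is the conceptually clean part; the main obstacle I expect is step (v) — being careful with the relative-to-absolute bookkeeping and with co-orientations (the twisted coefficients $\Z_w$ in Case 2), i.e. verifying that the two relative cycles $W$ and $(Y_1 \cup Y_2)\cap T$ in $H_{n-1}(T, \partial T; \Z_w)$ differing by something that dies in $H_{n-1}(T;\Z_w)$ really does force the two absolute cycles $Y$ and $Y_1\cup Y_2$ to agree in $H_{n-1}(X;\Z_w)$, which is exactly where the hypothesis that the co-orientations agree over $B$ is used.
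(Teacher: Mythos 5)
Your argument matches the paper's: both observe that the difference of the fundamental chains of $W$ and $(Y_1\cup Y_2)\cap T$ is a cycle in $T$ whose class vanishes because $T$ deformation retracts to $M$, which has dimension strictly less than that of $Y$. One small slip worth fixing: in Case 2 the relevant cycle has dimension $n-2$, not $n-1$, so the vanishing you need is $H_{n-2}(T;\Z_w)=H_{n-2}(M;\Z_w)=0$; the inequality $\dim M < \dim Y$ that drives the argument of course holds in both cases.
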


This will be proved in Section \ref{ssec:pre}. Note that $((Y_1 \cup Y_2) \setminus T) \cup W$ is not necessarily a smoothly embedded submanifold in $X$, but it can be turned into one by smoothing its corners at $B$. So from a smooth embedded submanifold $W \subset T$ with boundary $B$ we can construct a suitable $Y$. 

\begin{thm} \label{thm:top}
We can construct an embedded topological submanifold $W \subset T$ with boundary $B$.
\end{thm}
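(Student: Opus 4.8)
The plan is to build $W$ fibrewise over $M$ from an explicit model, and then let the model spread out over $M$ via the bundle structure of $\T$. For the model, inside the fibre $D^2\times D^2$ take
$$A=\bigl\{\,((1-r)\zeta,\;r\zeta)\ :\ r\in[0,1],\ \zeta\in S^1\,\bigr\}.$$
A direct computation shows that $A$ is a properly embedded annulus $S^1\times[0,1]$ with $A\cap\partial(D^2\times D^2)=\partial A=(S^1\times\{0\})\sqcup(\{0\}\times S^1)$, i.e.\ $\partial A$ is exactly the intersection of the model $Y_1\cup Y_2$ with $\partial(D^2\times D^2)$, and that $A$ is co-oriented by the complex structure of the fibre compatibly with the co-orientations of $Y_1,Y_2$ near $B$. (Equivalently, after pushing $A$ onto $S^3=\partial(D^2\times D^2)$ it is the preimage under the Hopf map $S^3\to S^2$ of an embedded arc joining the two Hopf fibres $S^1\times\{0\}$ and $\{0\}\times S^1$.) In Case~1 one uses instead the two arcs $\{\pm((1-r),r):r\in[0,1]\}\subset D^1\times D^1$. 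If $\T$ were trivial we would simply set $W=M\times A$ and be done, even smoothly; so the whole issue is the nontriviality of $\T$.

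The subgroup of the structure group $U(1)\times U(1)$ that preserves $A$ is exactly the diagonal circle: $(\alpha,\beta)$ carries $A$ to the annulus with the same boundary obtained by rotating the ``phase'' by $\alpha\beta^{-1}$. Hence a subbundle of $\T$ with fibre $A$ exists precisely when the structure group of $\T$ reduces to this diagonal, i.e.\ when $L_1\cong L_2$ — equivalently $c_1(L_1)=c_1(L_2)$ in $H^2(M;\Z)$, and in Case~1 when $w_1(\nu_1)|_M=w_1(\nu_2)|_M$ in $H^1(M;\Z_2)$. In general this fails, so the model annulus must be allowed to vary over $M$. I would fix a CW decomposition of $M$ and a trivialisation of $\T$ over each cell, place a copy of $A$ in each fibre over the $0$-skeleton, and extend over the skeleta inductively: over a cell $e^k$ the transition data of $\T$ give, over its attaching sphere, a family of embedded annuli in $D^2\times D^2$ with the fixed boundary $\partial A$, which must be extended over $e^k$. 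Over $1$-cells this is automatic: the structure-group element along a $1$-cell provides an explicit interpolation through embedded annuli with boundary $\partial A$; so $W$ is defined over the $1$-skeleton, with $\partial W=B$ and the correct co-orientation holding so far by construction.

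The main obstacle is the extension over cells of dimension $\ge2$, where one must fill a sphere's worth of embedded annuli and the natural filling need not be smooth; this is exactly the step at which the construction must leave the smooth category. The remedy is to perform the interpolation in the topological category: where the model annuli over the boundary of a higher cell cannot be joined through a smooth family of embedded annuli, one joins them through a family of \emph{topologically} (locally flatly) embedded surfaces, letting the fibre picture degenerate in a controlled way over a lower-dimensional subset of $M$; the flexibility of locally flat embeddings makes this possible. The outcome is a locally flatly embedded topological submanifold $W\subset\T$, meeting generic fibres in the model annulus. One finishes by pushing $W$ slightly inward along a collar of $\partial T$ so that $\interior W\subset\interior T$ and $\partial W=B$ with the required co-orientation, after which Lemma~\ref{lem:w} applies. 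The work, and the reason the statement produces only a \emph{topological} $W$, is entirely in justifying this topological interpolation over the higher cells and verifying that it yields a locally flat submanifold.
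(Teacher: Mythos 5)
Your setup matches the paper's in spirit: you pick an explicit model annulus in the fibre $D^2\times D^2$ with boundary $(S^1\times\{0\})\sqcup(\{0\}\times S^1)$, observe it is invariant under a circle subgroup of $U(1)\times U(1)$, and note that a global subbundle exists when the structure group of $\T$ reduces to that circle. (Minor discrepancy: your annulus $A=\{((1-r)\zeta,r\zeta)\}$ is the anti-diagonal-invariant model up to a conjugation, so the reduction condition is $L_1\cong\bar L_2$ rather than $L_1\cong L_2$; the paper's $V=\{2xy=1-|x|^2-|y|^2\}$ is parametrized by $((1-r)\zeta,r\bar\zeta)$, and its invariance group is $\{(\alpha,\alpha^{-1})\}$. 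This is a convention issue, not an error.)

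The genuine gap is in the inductive extension over the skeleta of $M$, which is where the entire content of the theorem lives and where you explicitly hand off to "the flexibility of locally flat embeddings." To extend over a $k$-cell ($k\geq 2$) you must fill a $(k-1)$-sphere's worth of $U(1)$-orbits of model annuli with a family of (topologically embedded) surfaces with fixed boundary, and prove that the total space is a topological manifold. Nothing in your argument says why such a filling exists, what it looks like, or why the result is locally flat --- the phrase "letting the fibre picture degenerate in a controlled way over a lower-dimensional subset" is a description of the desired outcome, not a construction. You have in effect reduced the theorem to itself. The paper resolves this by an explicit device that your sketch never reaches: it locates a codimension-$2$ submanifold $N\subset M$ (the pullback of ${\C}P^\infty\times{\C}P^{\infty-1}$) outside which $\T$ reduces, proves a local-triviality lemma showing that as one approaches $N$ from direction $\theta$ the fibre of $W'$ is $\{2x\theta y=1-|x|^2-|y|^2\}$ and hence the closure over $N$ is exactly the torus $\{|x|+|y|=1\}$, then fills that torus with the solid square $\tilde V=\{|x|+|y|\leq 1\}$, and finally gives explicit chart maps (the paper's $j'$ and $j''$) to verify that inserting $\tilde V$ produces a manifold with boundary $B$. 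That chain --- $N$, the local description near $N$, the choice of $\tilde V$, and the chart-level manifoldness check --- is precisely the missing content, and without it the induction over cells cannot be carried out.

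A secondary imprecision: you speak of "pushing $W$ slightly inward along a collar of $\partial T$." That move is unnecessary here (one wants $\partial W=B\subset\partial T$ precisely so that $W$ glues to $(Y_1\cup Y_2)\setminus T$ along $B$), and the paper does no such push; $W$ meets $\partial T$ exactly in $B$, by construction.
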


This will be proved in Section \ref{sec:top}. The construction goes as follows. 

First consider Case 1. Let $F$ be a fiber of $\T$, we can fix an identification between $F$ and $D^2$ (via any local trivialization of $\T$). Then $M \cap F$ is the origin, and $(Y_1 \cup Y_2) \cap F$ corresponds to $D^1 \times \{ 0 \} \cup \{ 0 \} \times D^1$. This can be replaced by two line segments, to get an embedded manifold with the same boundary $S^0 \times \{ 0 \} \cup \{ 0 \} \times S^0$ (corresponding to $B \cap F$), see Figure 1. The union of these two lines is the solution set of the equation 
\[
(x+y-1)(x+y+1)=0 \iff 2xy = 1 - x^2 - y^2 \iff 2xy = 1 - |x|^2 - |y|^2 \tag{$*$}
\] 
We will use the third form of this equation, because it works in Case 2 as well: the solution set of this complex equation is an embedded submanifold of $D^4$ with boundary $S^1 \times \{ 0 \} \cup \{ 0 \} \times S^1$ (see Proposition \ref{prop:v}). 

\begin{rem*}
Note that the other two forms of ($*$) can not work in the complex case. The solution set of the first equation does not contain $S^1 \times \{ 0 \} \cup \{ 0 \} \times S^1$, and the second equation is equivalent to the first one (but not the third one) over $\C$.
\end{rem*}

\begin{figure}[h]
\begin{center}
\includegraphics[scale=0.5]{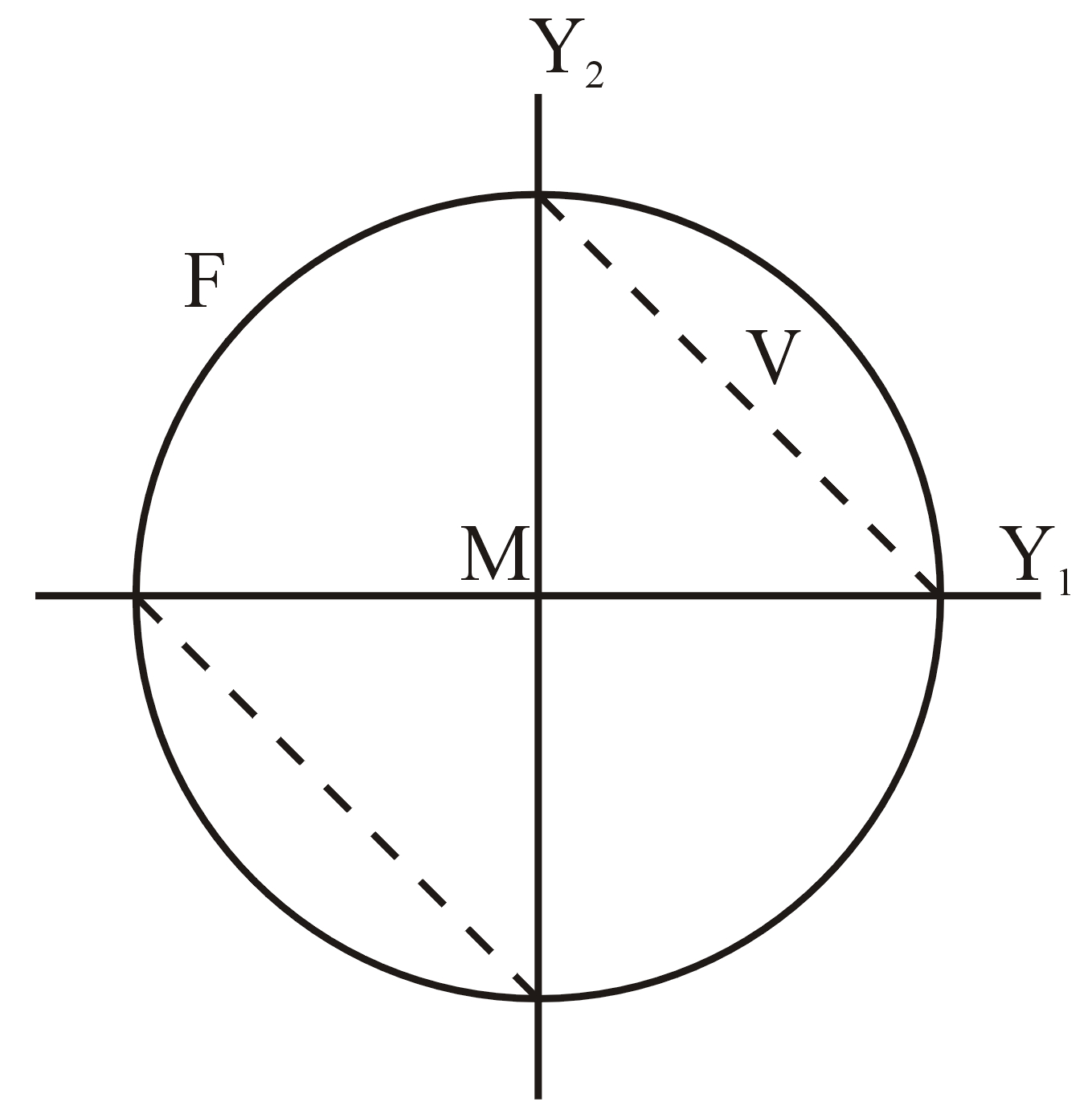}
\end{center}
\begin{center}
Figure 1.
\end{center}
\end{figure}

Let 
\[
V = \left\{ (x,y) \in D^4 \mid 2xy = 1 - |x|^2 - |y|^2 \right\} \text{\,.}
\]

If $T \approx M \times D^4$, ie.\ $\T$ is a trivial bundle, then we can replace $Y_1 \cup Y_2$ with $V$ in every fiber to get $W = M \times V$, which will be a suitable submanfold of $T$. This construction works in a more general situation. There is an action of $U(1)$ on $D^4$ given by $\alpha(x,y) = (\alpha(x), \alpha^{-1}(y))$, and $V$ is invariant under this action. Therefore if the normal bundles of $M$ in $Y_1$ and $Y_2$ are the complex conjugates of each other \big[isomorphic\big], ie.\ $\T$ has a $U(1)$-structure, then $V$ determines a well-defined subset in each fiber of $\T$, and these together form (the total space of) a subbundle $W \subset T$. 

In general, there is a codimension-2 \big[codimension-1\big] submanifold $N$ in $M$ such that $\T$ has a $U(1)$-structure over $M \setminus N$. So we can define the subset $W' \subset T \big| _{M \setminus N}$ as above, it is the union of the subsets $V$ in every fiber. We can also define $W'' \subset T \big| _N$, which contains the subset
\[
\tilde{V} = \left\{ (x,y) \in D^4 \mid 2|xy| \leq 1-|x|^2 - |y|^2 \right\} = \left\{ (x,y) \in D^4 \mid |x| + |y| \leq 1 \right\} 
\]
in every fiber. Note that $\tilde{V}$ is $(U(1) \times U(1))$-invariant, so $W''$ is well-defined. We then define $W = W' \cup W''$. 

We need to check that this $W$ really is a manifold. The proof of this is based on Lemma \ref{lem:loctriv}, which describes the structure of $W'$ around $N$. The statement of Lemma \ref{lem:loctriv} can be interpreted as follows: 

In Case 1 $V$ can be in two possible ``configurations", by which we mean that its image under the action of an element of $O(1) \times O(1)$ is either itself or the subset $\left\{ (x,y) \in D^2 \mid -2xy = 1 - |x|^2 - |y|^2 \right\}$. Any fiber $D^1$ of the normal bundle of the codimension-$1$ submanifold $N \subset M$ over a point $p \in N$ is divided into two parts by $p$ (which corresponds to $0 \in D^1$). If we choose an appropriate local trivialization of $\T$ over this $D^1$ to identify the fibers of $\T$ with $D^2$, then the fibers of $W'$ (that correspond to $V$ when viewed from an appropriate local trivialization of $\T \big| _{M \setminus N}$) will be in one configuration on one side of $p$, and in the other configuration on the other side. This is why we get a manifold when we insert the square $\tilde{V}$ in the fiber over $p$ (see Figure 2).

\begin{figure}[h]
\begin{center}
\includegraphics[scale=0.8]{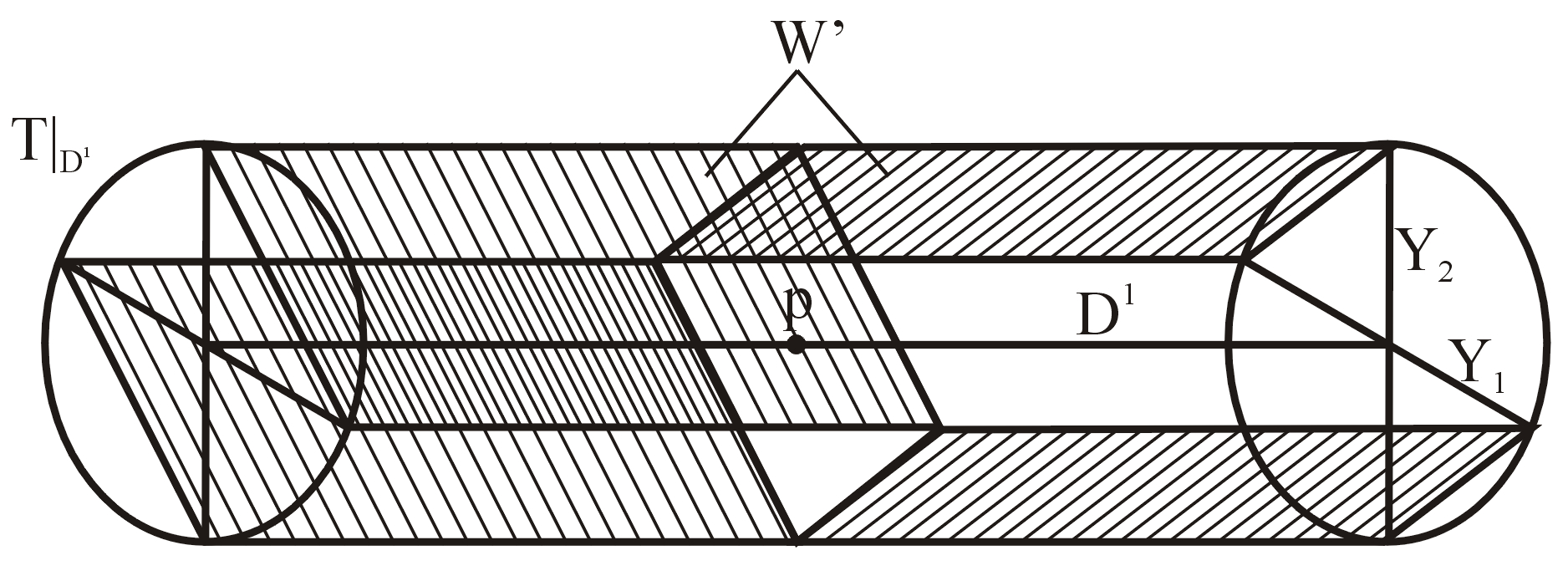}
\end{center}
\begin{center}
Figure 2.
\end{center}
\end{figure}

The situation in Case 2 is analogous. There is an $S^1$-family of possible configurations of $V$, given by $\left\{ (x,y) \in D^4 \mid 2x\theta y = 1 - |x|^2 - |y|^2 \right\}$, where $\theta \in S^1$ is a parameter. A fiber $D^2$ of the normal bundle of the codimension-$2$ submanifold $N \subset M$ can be parametrized by the distance from the origin $t \in [0,1]$ and direction $\theta \in S^1$. We can choose local trivializations (of $\T$, and of the normal bundle of $N \subset M$) such that the fibers of $W'$ over the points whose direction is a given $\theta \in S^1$ will be in the configuration given by $\theta$.

The proof of Lemma \ref{lem:loctriv} is based on the fact that the bundle $\T$ can be pulled back from a universal $(U(1) \times U(1))$-bundle by a map $f : M \rightarrow {\C}P^{\infty} \times {\C}P^{\infty}$ (such that $N = f^{-1}({\C}P^{\infty} \times {\C}P^{\infty-1})$) and on a description of the universal bundle around ${\C}P^{\infty} \times {\C}P^{\infty-1}$.

We will also prove that $W$ has a smooth approximation, more precisely: 

\begin{thm} \label{thm:smooth}
The $W$ constructed above has a smooth structure, and there is a smooth embedding $W \hookrightarrow T$ that is identical on $\partial W = B$, and is homotopic to the identity of $W$ (which can be viewed as a continuous map $W \rightarrow T$). In Case 2 the image $W_2$ of this embedding has a co-orientation that coincides with that of $Y_1 \cup Y_2$ over $B$. 
\end{thm}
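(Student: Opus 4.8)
The plan is to upgrade the topological submanifold $W = W' \cup W''$ of Theorem \ref{thm:top} to a smooth one by working locally and patching. The only place where $W$ fails to be obviously smooth is near $N$, where $W'$ limits onto $W''$; away from $N$ the set $W'$ is the total space of a fiber subbundle of $\T$ modelled on the real-analytic set $V \subset D^4$, and one checks directly from the defining equation $2xy = 1-|x|^2-|y|^2$ that $V$ is a smooth submanifold-with-boundary of $D^4$ (its boundary being $S^1 \times \{0\} \cup \{0\} \times S^1$, matching $B$), so $W'$ is already smooth there, $U(1)$-equivariantly. Thus the whole problem is a collar/neighbourhood analysis around $N$, and this is exactly what Lemma \ref{lem:loctriv} is designed to supply: it gives preferred local trivializations of $\T$ and of the normal bundle $\nu$ of $N \subset M$ in which the fibers of $W'$ over a point of $\nu$ of direction $\theta \in S^1$ are the configuration $\{2x\theta y = 1-|x|^2-|y|^2\}$ (Case 2; in Case 1 the two configurations $\pm$).

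The key steps, in order, are as follows. First, record that $V$ (and each of its rotated copies $V_\theta = \{2x\theta y = 1-|x|^2-|y|^2\}$) is a smooth compact submanifold-with-boundary of $D^4$, that $\tilde V = \{|x|+|y| \le 1\}$ is a smooth manifold-with-corners, and that the family $\{V_\theta\}_{\theta \in S^1}$ together with $\tilde V$ assembles, over the local model $\nu|_U \times D^4$ coming from Lemma \ref{lem:loctriv}, into a set which is a smooth submanifold after corner-smoothing — the essential point being the elementary normal form computation in a neighbourhood of a point of $N$ where $t \to 0$. Second, use a partition of unity on $N$ to see that this local smooth structure is well-defined globally: the transition functions between the preferred trivializations of Lemma \ref{lem:loctriv} lie in (a subgroup of) the structure group and preserve the family $\{V_\theta\}$ and $\tilde V$, so they carry the local smooth structures into each other, making $W$ into a smooth manifold with $\partial W = B$. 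Third, produce the embedding $W \hookrightarrow T$: start from the tautological continuous inclusion $\iota_0 \colon W \to T$, which is a topological embedding identical on $B$; since $W$ and $T$ are smooth and $\iota_0$ is already smooth away from a neighbourhood of $N$ and is an embedding, approximate $\iota_0$ rel $B$ and rel the smooth part by a smooth embedding $\iota_1$ (using that smooth embeddings are open and dense among maps of a manifold-with-boundary into a higher-dimensional manifold, with the approximation taken $C^0$-small so that $\iota_1$ is isotopic, hence homotopic, to $\iota_0$). Finally, in Case 2, transport the co-orientation of $W'$ — which it inherits from the co-orientations of $Y_1$ and $Y_2$ via its description as a fiber subbundle with oriented normal directions — across the embedding; it extends over the thin part $W''$ because $\tilde V$ sits in $D^4$ with a consistently oriented normal, and by construction it agrees with the co-orientation of $Y_1 \cup Y_2$ along $B$, as required.

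I expect the main obstacle to be the second step, the \emph{global} well-definedness of the smooth structure near $N$: one must check that the local smoothings coming from different choices in Lemma \ref{lem:loctriv} are mutually compatible (i.e.\ related by diffeomorphisms of the local models fixing the appropriate strata), so that they glue to an honest smooth atlas rather than merely a topological one. Concretely this amounts to verifying that the relevant bundle transition functions act on the model $D^4 \supset \bigcup_\theta V_\theta \cup \tilde V$ by maps that are smooth in the base and preserve the whole configuration, and then invoking a patching lemma for smooth structures on mapping cylinders/collars. The corner-smoothing at $B$ (needed anyway for Lemma \ref{lem:w}) and the approximation argument in step three are standard, and the co-orientation bookkeeping in step four is routine once the smooth structure is in place.
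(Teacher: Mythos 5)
Your proposal diverges substantially from the paper, and the divergence hides two genuine gaps. The paper does not try to smooth $W$ in place: it builds a \emph{different} smooth submanifold $W_2 \subset T$, defined by replacing the fiber equation $2x\theta y = 1-|x|^2-|y|^2$ with $2x\theta y = \ell(t)(1-|x|^2-|y|^2)$ for a cutoff $\ell$ that vanishes linearly at $t=0$ (and $\{xy=0\}$ over $N$); it checks $W_2$ is a smooth submanifold by the regular-value theorem; and it then constructs an \emph{explicit} homeomorphism $W \to W_2$ from formulas $i', i''$. The smooth structure on $W$ is pulled back along this homeomorphism, and the homeomorphism itself is the asserted smooth embedding. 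Your plan instead (i) assembles local smoothings of $W$ using Lemma~\ref{lem:loctriv}, and (ii) approximates the tautological inclusion $\iota_0 : W \hookrightarrow T$ by a nearby smooth embedding.

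Step (i) is not merely incomplete but rests on a false picture of the geometry. The failure of smoothness at $W''$ is not a corner. Over a boundary point $(x_0,y_0) \in \partial\tilde V$ with $x_0y_0 \neq 0$, the sheet coming from $\overline{W'}$ has tangent space containing a nonzero $g$-direction (the radial direction $t \mapsto t\,\overline{x_0y_0}/|x_0y_0|$ in the normal disk of $N$), while the sheet $\{0\} \times \tilde V \subset T\big|_N$ has tangent space entirely inside the fiber direction. These $(n-2)$-planes disagree, so $W' \cup W''$ is genuinely non-smooth as a subset of $T$ along $\partial\tilde V$; no corner-smoothing of $\tilde V$ or choice of local atlas can make the \emph{embedded} set smooth. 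You have to move the set, which is exactly what the cutoff $\ell$ and the transition to $\{xy=0\}$ over $N$ achieve in the paper. Step (ii) then invokes density of smooth embeddings among continuous maps, but this is not available here: $W$ has codimension $2$ in $T$, which is outside the range of the Whitney density theorem ($n \geq 2\dim W + 1$ would need $n \leq 3$), and outside the range of the Haefliger--Zeeman type results for approximating topological embeddings by smooth ones (codimension $\geq 3$). In codimension $2$ topological embeddings need not be approximable by smooth ones; you would have to argue from the specific structure of $\iota_0$. The paper avoids both issues by producing $W_2$ and the homeomorphism $W \to W_2$ explicitly, which is the real content of Section~\ref{sec:smooth}, and which your proposal replaces with a citation that does not apply.
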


In the special case, when $\T$ is a $U(1)$-bundle, the subbundle $W$ is already a smooth submanifold, so we only need to prove that it has an appropriate co-orientation (see Proposition \ref{prop:co-or}).

The general case will be proved in Section \ref{sec:smooth}. We will construct a smooth submanifold $W_2 \subset T$ and a homeomorphism between $W$ and $W_2$. Then the smooth structure of $W_2$ determines a smooth structure of $W$, and the map $W \rightarrow W_2$ will be a smooth embedding. We define $W_2$ by modifying the construction of $W$. Outside of a neighbourhood of $N$ it coincides with $W'$, in the fibers of $\T$ over $N$ it contains the subset $\left\{ (x,y) \in D^4 \mid xy = 0 \right\}$, and in a fiber of $\T$ over a point which is at distance $t \leq 1$ from $N$ we replace $V$ (the fiber of $W'$) with $\left\{ (x,y) \in D^4 \mid 2xy = \ell(t)(1-|x|^2 - |y|^2) \right\}$ for an appropriate $\ell : [0, 1] \rightarrow [0, 1]$. A homeomorphism $W \rightarrow W_2$ will be constructed in Section \ref{ssec:homeo}.

\begin{prop} \label{prop:or1}
(Only in Case 1.) If $Y_1$ and $Y_2$ are both oriented, or both of them are co-oriented, then we can construct a $Y$ with the same property.
\end{prop}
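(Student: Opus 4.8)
The plan is to track orientations (or co-orientations) through the construction of $W$ and the resulting $Y$, working fiberwise in $T$ and then globalizing. Recall that in Case 1 a submanifold is co-oriented iff its normal bundle is orientable; for codimension-1 submanifolds ``oriented'' and ``co-oriented'' are equivalent data up to the fixed orientation of ambient $X$ only when $X$ itself is oriented, so the two halves of the statement really do need to be handled in parallel but by the same mechanism. The key observation is that $V \subset D^2$ (the union of the two tilted line segments, solution of $(*)$) inherits a co-orientation in $D^2$ from the co-orientations of $D^1\times\{0\}$ and $\{0\}\times D^1$ that agrees with them on the boundary $S^0\times\{0\}\cup\{0\}\times S^0$: concretely, on each of the two segments the co-orientation of $V$ is the one matching whichever of $Y_1\cap F$, $Y_2\cap F$ it is a perturbation of near the two ends, and one checks this is consistent along each segment (each segment runs from a point of $B_1$ to a point of $B_2$, but since the two arcs are disjoint and each is a perturbation of ``half of $Y_1$ together with half of $Y_2$'', the co-orientation is forced and consistent). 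One then checks directly that this fiberwise co-orientation is invariant under the relevant structure group actions (the $O(1)$-action $\alpha(x,y)=(\alpha x,\alpha^{-1} y)$ used over $M\setminus N$, and the $O(1)\times O(1)$-action over $N$ on $\tilde V$), exactly as $V$ and $\tilde V$ themselves are, so the co-orientations fit together into a co-orientation of the bundle $W\subset T$; over $B$ it agrees with that of $Y_1\cup Y_2$ by construction. Gluing with $(Y_1\cup Y_2)\setminus T$ (whose normal bundle is already co-oriented, being a perturbation-free piece of the co-oriented $Y_1\cup Y_2$) and smoothing the corners at $B$ — which preserves the co-orientation since the corner-smoothing happens in a collar where the two co-orientations already match — yields a co-oriented $Y$.

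For the oriented case I would argue similarly but carry an honest orientation of $W$ rather than of its normal bundle. First orient $V\subset D^2$: it is a disjoint union of two arcs, each diffeomorphic to an interval, so orienting it amounts to choosing a direction on each arc; choose the one that restricts on a neighbourhood of each boundary point to the given orientation of whichever of $Y_1\cap F$, $Y_2\cap F$ that end came from. As in the co-oriented case one verifies this is consistent along each arc and is preserved by the $O(1)$- and $O(1)\times O(1)$-actions appearing in the construction; it also extends over the inserted squares $\tilde V$ (which are $2$-disks, manifestly orientable) compatibly, because the square is glued to the two arcs along their boundary circles and one can choose the disk's orientation to match. Hence $W$ acquires an orientation agreeing with that of $Y_1\cup Y_2$ over $B$, and gluing to the oriented piece $(Y_1\cup Y_2)\setminus T$ and smoothing corners produces an oriented $Y$. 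The existence of the oriented/co-oriented smooth embedding $W\hookrightarrow T$ follows from Theorem \ref{thm:smooth} together with the fact that orientations/co-orientations are transported along ambient isotopies and small perturbations.

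The step I expect to be the main obstacle is checking consistency of the fiberwise orientation (resp.\ co-orientation) along each arc of $V$ and its compatibility with the inserted squares near $N$ — in other words, ruling out a sign ambiguity that would obstruct gluing. The subtlety is that each arc of $V$ connects a point of $B_1$ to a point of $B_2$, so one must confirm that the orientation ``coming from $Y_1$'' at one end and the orientation ``coming from $Y_2$'' at the other end are the same orientation of the arc; Figure 1 makes this plausible (the arc is a graph over a coordinate axis and the two given orientations both point in the direction of increasing that coordinate), but it must be verified from the formula $(x+y-1)(x+y+1)=0$, i.e.\ from the defining equations of the two arcs $x+y=\pm1$, and then checked to survive the local-triviality description of $W'$ near $N$ given by Lemma \ref{lem:loctriv}, which is precisely where the two configurations of $V$ meet. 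Once that local picture is pinned down, globalization is formal: the orientation (resp.\ co-orientation) data are defined on the fibers in a structure-group-invariant way, hence descend to $W$, and agree with $Y_1\cup Y_2$ on the overlap, so they patch to give the desired $Y$.
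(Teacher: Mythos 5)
Your proposal has a genuine gap at exactly the point you flag as the ``main obstacle'': the claim that the two arcs of $V$ (i.e.\ $V_1 = \{x+y=\pm 1\}$) always admit an orientation (resp.\ co-orientation) restricting at both boundary points to the given orientations of $D^1\times\{0\}$ and $\{0\}\times D^1$ is \emph{false}. Take the arc $L_1=\{x+y=1\}$ running from $(1,0)$ to $(0,1)$, and suppose in the fiber $F$ the orientation of $Y_1\cap F$ is $+x$ and that of $Y_2\cap F$ is $+y$. At $(1,0)$ the orientation of $Y_1$ points out of $T$, so compatibility after smoothing forces $L_1$ to be oriented from $(0,1)$ toward $(1,0)$; at $(0,1)$ the orientation of $Y_2$ points out of $T$, forcing $L_1$ to be oriented from $(1,0)$ toward $(0,1)$ --- a contradiction. (In your phrasing: at the $(1,0)$ end the orientation points toward increasing $x$, but at the $(0,1)$ end it points toward increasing $y$, which on the graph $y=1-x$ means \emph{decreasing} $x$, so the two do not point the same way.) The same failure occurs in the co-oriented sub-case for co-orientations $(+y,-x)$, say. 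So no orientation/co-orientation of $V_1$ alone can be consistent with all choices of orientations of $Y_1,Y_2$.

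The paper resolves this by \emph{changing the construction of $W$} rather than trying to orient the $W$ from Theorem~\ref{thm:top}: for each fiber it picks one of the two $O(1)$-invariant configurations $V_1=\{2xy=1-x^2-y^2\}$ and $V_2=\{-2xy=1-x^2-y^2\}$, namely whichever one admits an orientation (resp.\ co-orientation) compatible with those of $D^1\times\{0\}$ and $\{0\}\times D^1$ in that fiber (exactly one of them does). This choice is what Figures~3--4 illustrate, and it is precisely the degree of freedom your argument is missing. A secondary point: the paper also observes that the hypotheses of Proposition~\ref{prop:or1} force $\T$ to reduce to the diagonal $O(1)\leq O(1)\times O(1)$ (co-oriented $\Rightarrow$ $\T$ trivial; oriented $\Rightarrow$ $\T$ orientable), hence $N=\emptyset$ and one never needs the squares $\tilde V$ or Lemma~\ref{lem:loctriv} --- so the part of your argument devoted to extending orientations over $W''$ near $N$ is not needed, and indeed could not be carried out for the general topological $W$ which may fail to be orientable.
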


\begin{prop} \label{prop:or2}
If both of $Y_1$ and $Y_2$ are both oriented and co-oriented, then we can construct an oriented and co-oriented $Y$ iff $Y_1$ and $Y_2$ define the same local orientation of $X$ at each point of $M$.
\end{prop}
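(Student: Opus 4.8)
The plan is to repackage ``oriented and co-oriented'' in a form that is transparent to track across the surgery defining $W$, and then read off the obstruction from the fibre of $\T$ over $M$. Recall that for an embedded submanifold $Z\subset X$ a co-orientation is an orientation of $\nu(Z)$, and since $TX|_Z=TZ\oplus\nu(Z)$, the data of an orientation together with a co-orientation of $Z$ is the same as that of an orientation of $Z$ together with a \emph{local orientation of $X$ along $Z$}, meaning a section over $Z$ of the orientation double cover of $X$. This second description is invariant under isotopy, hence insensitive to the corner-smoothing that turns $((Y_1\cup Y_2)\setminus T)\cup W$ into $Y$. An oriented and co-oriented $Y_i$ thus carries a local orientation $\Omega_i$ of $X$ along $Y_i$, and the hypothesis of the Proposition is precisely that $\Omega_1$ and $\Omega_2$ restrict to the same local orientation of $X$ along $M$. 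Let $\Omega$ denote the local orientation of $X$ along $(Y_1\cup Y_2)\setminus T$ that equals $\Omega_i$ on $Y_i\setminus T$. I will argue that producing an oriented and co-oriented $Y$ whose two structures agree with those of $Y_1\cup Y_2$ over $B$ amounts to extending $\Omega$ over $W$ to a local orientation of $X$ along all of $Y$, and that such an extension exists if and only if $\Omega_1|_M=\Omega_2|_M$. (I treat Case~1 in detail; the necessity direction is identical in Case~2.)

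\emph{Necessity.} Suppose $Y$ is oriented and co-oriented with both structures agreeing with those of $Y_1\cup Y_2$ over $B$, and let $\Omega$ be the induced local orientation of $X$ along $Y$, so that $\Omega=\Omega_i$ along $B_i$. Fix $p\in M$ and work inside a contractible neighbourhood $X_p\subset X$ of the fibre $F_p$ of $\T$ over $p$: there local orientations of $X$ form a trivial double cover, so $\Omega$ is a single sign and each $\Omega_i$ is constant on the connected set $Y_i\cap X_p$. The fibre of $W$ over $p$ lies in $F_p\subset X_p$, and each of its connected components meets both $B_1\cap F_p$ and $B_2\cap F_p$ (in Case~1, each of the two segments of $V$ runs from a point of $B_1$ to a point of $B_2$; in Case~2 the annulus $V$, or $\tilde{V}$ if $p\in N$, has boundary circles $B_1\cap F_p$ and $B_2\cap F_p$). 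On such a component $\Omega$ is constant, equal to $\Omega_1(p)$ at its $B_1$-end and to $\Omega_2(p)$ at its $B_2$-end; hence $\Omega_1(p)=\Omega_2(p)$.

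\emph{Sufficiency.} Assume $\Omega_1|_M=\Omega_2|_M$. In Case~1, since $Y_1$ and $Y_2$ are co-oriented their normal bundles are trivial, hence so is $\nu(Y_i)|_M$, and therefore $\T$ (their Whitney sum) is a trivial $D^1\times D^1$-bundle; in particular $\T$ has an $O(1)$-structure over all of $M$, the submanifold $N$ is empty, and $W=W'=M\times V$ with $V$ a union of two disjoint segments. By the argument behind Proposition~\ref{prop:or1} we may take the resulting $Y$ to be oriented, compatibly with the given orientations of $Y_1$ and $Y_2$ over $(Y_1\cup Y_2)\setminus T$; it then remains to co-orient $Y$ compatibly over $B$, which by the reformulation means extending $\Omega$ to a local orientation of $X$ along all of $Y$ (the induced co-orientation will restrict over $B_i$ to $\Omega_i$ combined with the orientation of $Y_i$, i.e.\ to the co-orientation of $Y_i$). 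One must extend $\Omega$ only across each component $M_0\times\sigma$ of $W$, where $M_0$ is a component of $M$ and $\sigma$ a segment of $V$. Since $\sigma$ is an interval, such a component deformation retracts onto $M_0$, so the orientation cover of $X$ pulled back to it is pulled back from $M_0$; a section of it with prescribed restrictions to the two boundary copies $M_0\times\partial\sigma$ extends, uniquely, if and only if those two restrictions agree under the retraction, and the fibrewise picture identifies this condition with $\Omega_1|_{M_0}=\Omega_2|_{M_0}$, which holds. Lemma~\ref{lem:w} then gives $[Y]=[Y_1]+[Y_2]$, so this $Y$ is as required.

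The step I expect to be the crux is the fibrewise computation identifying ``$\Omega$ extends across $\sigma$'' with ``$\Omega_1|_M=\Omega_2|_M$'': one has to choose compatible local trivializations of $\T$ and of $TY_1,TY_2,\nu(Y_1),\nu(Y_2)$ near a point of $M$, keep careful track of the order in which $T_pM$, $\nu_p(Y_1)$ and $\nu_p(Y_2)$ are assembled into $T_pX$ (interchanging the two normal lines reverses orientation — this is exactly why the criterion is the \emph{equality} $\Omega_1=\Omega_2$ and not a vacuous statement), and check that parallel transport along $\sigma$ inside the fibre $D^2$ is the identity in those coordinates, so that no spurious sign creeps in. Once the conventions are pinned down this is short. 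The only extra work in Case~2 is that $\T$ need not reduce to a $U(1)$-bundle and $N$ may be nonempty, so on the sufficiency side the extension of $\Omega$ must also be performed over $W''$ and across the region interpolating between $W'$ and $W''$, using the descriptions of $W$ from Sections~\ref{sec:top} and \ref{sec:smooth} (and co-orientability of $Y$ is then supplied by Theorem~\ref{thm:smooth} in place of Proposition~\ref{prop:or1}); the necessity direction already applies uniformly.
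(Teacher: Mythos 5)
Your proof is correct and takes essentially the same approach as the paper. Both arguments repackage the oriented-and-co-oriented data as an orientation together with a local orientation of $X$ along the submanifold (the paper phrases the latter as an orientation of $T$, via the retraction $T\to M$), deduce necessity from the fact that the fibre of $W$ over a point of $M$ meets both $B_1$ and $B_2$ so the two local orientations must agree, and obtain sufficiency by combining the construction of Proposition~\ref{prop:or1}/Theorem~\ref{thm:smooth} with the now well-defined local orientation along $M$ to supply the remaining structure.
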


These propositions will be proved at the end of Section \ref{ssec:sp} and Section \ref{sec:smooth} respectively.

\subsection*{Acknowledgements}

I am grateful to Professor Matthias Kreck for suggesting the problem and to Professor Andr\'as Sz\H{u}cs for his helpful remarks.

\section{The construction of $W$} \label{sec:top}

\subsection{Preliminaries} \label{ssec:pre}

\begin{proof}[Proof of Lemma \ref{lem:w}]
Let $c(W), c((Y_1 \cup Y_2) \cap T) \in C_{n-2}(T; \Z_w)$ and $c((Y_1 \cup Y_2) \setminus T) \in C_{n-2}(X; \Z_w)$ be fundamental chains of the manifolds (with boundary) $W$, etc. For a cycle $c$ let $[c]$ denote its homology class. Let $i : T \hookrightarrow X$ be the inclusion, $i_{\cdot} : C_{n-2}(T; \Z_w) \rightarrow C_{n-2}(X; \Z_w)$, $i_* : H_{n-2}(T; \Z_w) \rightarrow H_{n-2}(X; \Z_w)$. Then $i_*$ is the zero map, because $M$ is a deformation retract of $T$, hence $H_{n-2}(T; \Z_w) = H_{n-2}(M; \Z_w) = 0$. \big[In Case 1 we use $H_{n-1}(T; \Z_2) = H_{n-1}(M; \Z_2) = 0$.\big] Then we have
\[
\begin{aligned}
\ [Y] &= [c((Y_1 \cup Y_2) \setminus T) + i_{\cdot}(c(W))] \\
&= [c((Y_1 \cup Y_2) \setminus T) + i_{\cdot}(c((Y_1 \cup Y_2) \cap T)) + \break i_{\cdot}(c(W) - c((Y_1 \cup Y_2) \cap T))] \\
&= [c((Y_1 \cup Y_2) \setminus T) + i_{\cdot}(c((Y_1 \cup Y_2) \cap T))] + i_*([c(W) - \break c((Y_1 \cup Y_2) \cap T)]) \\
&= [Y_1 \cup Y_2] + 0 = [Y_1] + [Y_2] \text{\,.}
\end{aligned}
\]
\end{proof}

We have claimed that the structure group of $\T$ is $U(1) \times U(1)$, now we shall make this statement more precise. The structure group of $\T$ is, in fact, a certain subgroup of $\Homeo(D^4)$. It is isomorphic to $U(1) \times U(1)$ via the isomorphism specified by the action $(\alpha, \beta)(x,y)=(\alpha(x), \beta(y))$ (recall that an action of a group $G$ on a space $F$ is just a homomorphism $G \rightarrow \Homeo(F)$). 

But we can define another isomorphism between $U(1) \times U(1)$ and the structure group of $\T$, it is given by the action $(\alpha, \beta)(x,y)=(\alpha(x), \alpha^{-1}\beta(y))$ (this is an isomorphism, because it is the composition of the previous action with the automorphism of $U(1) \times U(1)$, $(\alpha, \beta) \mapsto (\alpha, \alpha^{-1}\beta)$). The reason for using this isomorphism instead of the first one is that we want to apply the following lemma to it.

\begin{lem} \label{lem:gr}
Let $F$ be a space, and  $G_1, G_2 \leq G_{12} \leq \Homeo(F)$ such that $G_{12} = G_1 \times G_2$. Then $BG_{12} = BG_1 \times BG_2$ (up to homotopy equivalence). There exist universal bundles $p_i : E_i \rightarrow BG_i$ with fiber $F$ and structure group $G_i$ for $i = 1, 2, 12$. Since $G_1 \leq G_{12}$, $p_1$ is also a $G_{12}$-bundle. We claim that the inclusion $BG_1 \approx BG_1 \times \ast \subseteq BG_1 \times BG_2$ induces $p_1$ from $p_{12}$. (And an analogous statement holds for $G_2$.)
\end{lem}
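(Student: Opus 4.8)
The plan is to build explicit models for the three classifying spaces and deduce everything from the Borel construction together with the classification theorem for fibre bundles with prescribed structure group; no deep input is needed.

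First I would fix contractible spaces $EG_1$, $EG_2$ carrying free $G_i$-actions, with $BG_i := EG_i/G_i$. Since $G_{12} = G_1 \times G_2$ acts freely and coordinatewise on the contractible space $EG_1 \times EG_2$, this product is a legitimate model for $EG_{12}$, and $BG_{12} = (EG_1 \times EG_2)/(G_1 \times G_2) = BG_1 \times BG_2$; this is the first assertion. The universal $F$-bundles are then the associated (``Borel'') bundles $p_i \colon E_i := EG_i \times_{G_i} F \to BG_i$ --- for $i = 12$ using the product model, so that $E_{12} = (EG_1 \times EG_2) \times_{G_1 \times G_2} F$ --- and their universality (every $F$-bundle with structure group $G_i$ over a paracompact base is a pullback of $p_i$, uniquely up to homotopy) is inherited from the corresponding statement for principal $G_i$-bundles via the associated-bundle correspondence.

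Next, because $G_1 \subseteq G_{12} \subseteq \Homeo(F)$, any bundle with structure group $G_1$ has transition functions that also take values in $G_{12}$, so $p_1$ is in particular a $G_{12}$-bundle. To prove the remaining claim, namely that $j^* p_{12} \cong p_1$ as $G_{12}$-bundles, where $j \colon BG_1 \approx BG_1 \times \ast \hookrightarrow BG_1 \times BG_2$, I would compute the pullback directly in the product model: fix $\xi_2^0 \in EG_2$ and set $\ast = [\xi_2^0] \in BG_2$. A point of $E_{12}$ lying over $j(b) = (b, \ast)$ is a class $[(\xi_1, \xi_2, f)]$ with $[\xi_1] = b$ and $[\xi_2] = \ast$; using the $G_2$-factor of the action one may normalise $\xi_2 = \xi_2^0$, and since the $G_2$-action on $EG_2$ is free the only surviving identifications are $(\xi_1, \xi_2^0, f) \sim (\xi_1 g_1, \xi_2^0, g_1^{-1}f)$ with $g_1 \in G_1$ acting on $F$ through $G_1 \subseteq \Homeo(F)$. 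Hence $j^* E_{12} \cong EG_1 \times_{G_1} F = E_1$ over $BG_1$, which is exactly $p_1$, as required. (Alternatively: $i \colon G_1 \hookrightarrow G_{12}$ equals $(\id_{G_1}, c)$ with $c \colon G_1 \to G_2$ the trivial homomorphism, so the classifying map $Bi$ of $p_1$ viewed as a $G_{12}$-bundle is homotopic to $(\id_{BG_1}, Bc)$, and $Bc$ is nullhomotopic since it factors through $B\{e\} = \ast$; thus $Bi \simeq j$ and the classification theorem concludes.)

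I do not expect a genuine obstacle here: the content is entirely bookkeeping about the Borel construction. The only points demanding care are ensuring that the base spaces in play are paracompact (or of CW homotopy type) so that the classification theorem applies, and respecting the distinction between actual equality and homotopy equivalence of classifying spaces --- which is why the statement is phrased ``up to homotopy equivalence''. The claim for $G_2$ follows by interchanging the roles of the two factors.
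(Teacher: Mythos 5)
Your argument matches the paper's: both build the product model $EG_{12}=EG_1\times EG_2$, pass to the associated Borel bundles $E_i = EG_i\times_{G_i}F$, and then compute the restriction over $BG_1\times\ast$ explicitly — the paper observes that the principal bundle restricts to $EG_1\times G_2 \to BG_1$ and uses $(EG_1\times G_2)\times_{G_1\times G_2}F \cong EG_1\times_{G_1}F$, while you equivalently normalize $\xi_2=\xi_2^0$ inside the Borel construction before quotienting; these are the same computation presented in two ways. Your parenthetical alternative via $Bi\simeq j$ is a mild restatement, and your remark on paracompactness is a reasonable technical caveat the paper leaves implicit.
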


\begin{proof}
Let $\bar{p}_i : EG_i \rightarrow BG_i$ be the universal principal $G_i$-bundle, then $EG_{12} = EG_1 \times EG_2$, and $E_i = EG_i \times_{G_i} F$. It is easy to check that the restriction of the bundle $\bar{p}_{12}$ to $BG_1 \times \ast$ is the composition $EG_1 \times G_2 \rightarrow EG_1 \rightarrow BG_1$. And $E_1 = EG_1 \times_{G_1} F = (EG_1 \times G_2) \times_{G_1 \times G_2} F$, so $p_1$ is the restriction of $p_{12}$ to $BG_1 \times \ast$.
\end{proof}

Let $\xi$ be the universal bundle with fiber $D^4$ and structure group $U(1) \times U(1)$ (identified with a subgroup of $\Homeo(D^4)$ via the action $(\alpha, \beta)(x,y)=(\alpha(x), \alpha^{-1}\beta(y))$). Its base space is $BU(1)\times BU(1) = {\C}P^{\infty} \times {\C}P^{\infty}$. The bundle $\T$ can be induced from $\xi$ by a homotopically unique map $f : M \rightarrow {\C}P^{\infty} \times {\C}P^{\infty}$.

\subsection{Special case} \label{ssec:sp}

First consider the special case when $f$ goes into ${\C}P^{\infty} \times \ast$. By Lemma \ref{lem:gr} $\xi \big| _{{\C}P^{\infty} \times \ast}$ is the universal bundle with fiber $D^4$, structure group $U(1)$ and action $\alpha(x,y)=(\alpha(x), \alpha^{-1}(y))$. So in this case $\T$ also has a bundle structure of this type. Note that $\T$ has a $U(1)$-structure ($f$ is homotopic to a map into ${\C}P^{\infty} \times \ast$) iff the normal bundles of $M$ in $Y_1$ and $Y_2$ are the complex conjugates of each other \big[isomorphic\big]. 

Let $F$ be a fiber of $\T$, and choose an identification of $F$ with $D^4$ (coming from a local trivialization of $\T$, so this identification can be changed by the action of $U(1)$). Let 
\[
V = \left\{ (x,y) \in F \approx D^4 \subset \C^2 \mid 2xy = 1 - |x|^2 - |y|^2 \right\}.
\]

This $V$ is invariant under the action of $U(1)$, hence it is well-defined (independent of the choice of the identification of $F$ with $D^4$). Note that if we define $V$ in all fibers of $\T$, they will form a locally trivial bundle $W$ over $M$.

\begin{prop} \label{prop:v}
The subset $V$ defined above is a smooth submanifold of $F$.
\end{prop}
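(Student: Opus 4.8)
The plan is to exhibit $V$ directly as the image of an explicit embedding of the cylinder $S^1 \times [0,1]$ into $D^4 \approx F$. Define
\[
\psi : S^1 \times [0,1] \longrightarrow \C^2, \qquad \psi(\phi, t) = \left( t e^{i\phi},\, (1-t) e^{-i\phi} \right).
\]
I would first show that the image of $\psi$ is exactly $V$. The inclusion $\psi(S^1 \times [0,1]) \subseteq V$ is a one-line substitution: $|te^{i\phi}|^2 + |(1-t)e^{-i\phi}|^2 = t^2 + (1-t)^2 \le 1$, and both sides of ($*$) evaluate to $2t(1-t)$. For the reverse inclusion I would use that on $D^4$ the right-hand side $1 - |x|^2 - |y|^2$ is a nonnegative real number, so if $(x,y) \in V$ then $2xy \in \R_{\ge 0}$. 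Hence either $xy = 0$ — which forces $|x|^2 + |y|^2 = 1$, so $(x,y)$ is $(0, e^{-i\phi})$ or $(e^{i\phi}, 0)$, i.e.\ $\psi(\phi,0)$ or $\psi(\phi,1)$ — or $x = r_1 e^{i\phi}$, $y = r_2 e^{-i\phi}$ with $r_1, r_2 > 0$ (since $\arg x + \arg y \equiv 0$ and $2xy = 2|x||y|$), and then ($*$) reads $(r_1 + r_2)^2 = 1$, so $r_1 + r_2 = 1$ and $(x,y) = \psi(\phi, r_1)$.

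Next I would verify that $\psi$ is a smooth embedding. Smoothness is immediate. Injectivity follows by reading off $t$ from $|x|$ and then recovering $\phi$ from $x$ (when $t \in (0,1]$) or from $y$ (when $t = 0$). For the immersion property I would compute $\partial_t \psi = (e^{i\phi}, -e^{-i\phi})$ and $\partial_\phi \psi = (it e^{i\phi}, -i(1-t) e^{-i\phi})$ and check that these two vectors are $\R$-linearly independent for every $(\phi, t)$, including the boundary values $t = 0$ and $t = 1$ (where $\partial_\phi\psi$ degenerates in one complex coordinate but not the other). Since $S^1 \times [0,1]$ is compact, an injective immersion into the Hausdorff space $\C^2$ is an embedding. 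It follows that $V = \psi(S^1 \times [0,1])$ is a smooth compact submanifold-with-boundary of $F \approx D^4$, with $\partial V = \psi(S^1 \times \{0,1\}) = (\{0\} \times S^1) \cup (S^1 \times \{0\})$, which is precisely $B \cap F$. I would also note that $\partial_t\psi$ is not tangent to $S^3 = \partial D^4$ along $t \in \{0,1\}$, so $V$ meets $\partial D^4$ transversally and is a neat submanifold.

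An alternative would be to write $V = g^{-1}(0)$ for $g(x,y) = 2xy + |x|^2 + |y|^2 - 1 \in \C$ and to check that $0$ is a regular value of $g|_{\interior D^4}$ together with transversality to $\partial D^4$; but $g$ is not holomorphic, and the behaviour at the boundary sphere — where the constraint set meets $\partial D^4$ — needs a separate argument, so the explicit parametrization is cleaner and simultaneously identifies the diffeomorphism type of $V$ (a cylinder) and its boundary. The only genuine content in the argument is the reverse inclusion $V \subseteq \psi(S^1 \times [0,1])$, i.e.\ the observation that nonnegativity of $1 - |x|^2 - |y|^2$ on $D^4$ pins down $\arg x = -\arg y$ and reduces ($*$) to $(r_1 + r_2)^2 = 1$; everything else is a routine linear-independence computation for $d\psi$, the one mild subtlety being at the two boundary circles.
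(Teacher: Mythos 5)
Your argument is correct, but it takes a genuinely different route from the paper. The paper proves Proposition~\ref{prop:v} by writing $V = D^4 \cap v^{-1}(0,0)$ for the real map $v : \R^4 \to \R^2$ coming from the real and imaginary parts of $2xy - (1 - |x|^2 - |y|^2)$, and shows $(0,0)$ is a regular value by a direct analysis of the $2\times 2$ minors of $\mathrm{d}v$; transversality of $V$ to $\partial F$ is then checked separately. You instead produce an explicit parametrization $\psi(\phi,t) = (te^{i\phi},(1-t)e^{-i\phi})$, verify $\psi(S^1\times[0,1]) = V$ using the positivity of $1-|x|^2-|y|^2$ on $D^4$, and check that $\psi$ is an injective immersion meeting $\partial D^4$ transversally. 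Your approach has the virtue that it also yields, in one stroke, the two propositions that immediately follow in the paper: $\partial V = B\cap F$, and $V \approx S^1 \times D^1$ -- both of which the paper proves by separate (short) arguments. What the paper's regular-value computation buys, which your parametrization does not directly give, is the explicit co-orienting frame consisting of the rows of $\mathrm{d}v$; that frame is reused essentially verbatim in the proof of Proposition~\ref{prop:co-or} to show the co-orientation of $W$ agrees with that of $Y_1 \cup Y_2$ along $B$. So in isolation your route is cleaner, but in the context of the paper the implicit-function-theorem calculation is doing double duty.
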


\begin{proof}[Proof in Case 2]
Consider the function $v : \R^4 \rightarrow \R^2$, 
\[
v(x_1,x_2,y_1,y_2) = (2x_1y_1-2x_2y_2-1+x_1^2+x_2^2+y_1^2+y_2^2 \, ,\: 2x_1y_2+2x_2y_1) \text{\,.}
\] 
Then $V = D^4 \cap v^{-1}(0,0)$ (if we identify $F$ with $D^4$), hence it is enough to prove that $(0,0)$ is a regular value of $v$. So we need to find the critical points of $v$.
\[
\mathrm{d}v = 2
\begin{bmatrix} 
y_1+x_1 & -y_2+x_2 & x_1+y_1 & -x_2+y_2 \\
y_2 & y_1 & x_2 & x_1
\end{bmatrix}
\]
$(x_1,x_2,y_1,y_2)$ is critical iff $\rk \mathrm{d}v <2$ at that point, that is, all $2 \times 2$ minors of the matrix vanish:
\begin{alignat}{4}
y_1^2 &\:+&\: x_1y_1 &\:-&\: y_2^2 &\:+&\: x_2y_2 &\:= 0 \\
y_1x_2 &\:+&\: x_1x_2 &\:+&\: x_1y_2 &\:+&\: y_1y_2 &\:= 0 \\
y_1x_1 &\:+&\: x_1^2 &\:-&\: x_2y_2 &\:+&\: y_2^2 &\:= 0 \\
-y_2x_2 &\:+&\: x_2^2 &\:+&\: x_1y_1 &\:+&\: y_1^2 &\:= 0 \\
-y_2x_1 &\:+&\: x_2x_1 &\:-&\: x_2y_1 &\:+&\: y_2y_1 &\:= 0 \\
x_1^2 &\:+&\: y_1x_1 &\:-&\: x_2^2 &\:+&\: y_2x_2 &\:= 0 
\end{alignat}
Substracting (4) from (3) we get $x_1^2 + y_2^2 - x_2^2 - y_1^2 = 0$, hence
\[
x_1^2 - y_1^2 = x_2^2 - y_2^2 \tag{7} 
\]
(2) means that
\[
(x_1 + y_1)(x_2 + y_2) = 0
\]
It follows that either $|x_1| = |y_1|$ or $|x_2| = |y_2|$. By (7) one of these implies the other, so we know that $|x_1| = |y_1|$ \textit{and} $|x_2| = |y_2|$. So there exist $\varepsilon_1 = \pm 1$ and $\varepsilon_2 = \pm 1$ such that $y_1 = \varepsilon_1x_1$ and $y_2 = \varepsilon_2x_2$. By (3) $\varepsilon_1x_1^2 + x_1^2 - \varepsilon_2x_2^2 + x_2^2 = 0$, hence
\[
(\varepsilon_1+1)x_1^2 + (-\varepsilon_2+1)x_2^2 = 0
\]
Both of the summands are non-negative, so both of them must be $0$. It follows that $(\varepsilon_1+1)x_1 = 0$ and $(-\varepsilon_2+1)x_2 = 0$, that is $y_1 + x_1 = 0$ and $-y_2 + x_2 = 0$. But in this case $v(x_1,x_2,y_1,y_2) = (-1,0)$, so this is the only critical value of $v$. Therefore $(0,0)$ is a regular value, and $V$ is a submanifold.
\end{proof}

\begin{proof}[Proof in Case 1] 
$2xy = 1 - x^2 - y^2 \Leftrightarrow (x+y-1)(x+y+1)=0$, so $V$ is the union of two parallel lines (intersected by $D^2$).
\end{proof}

\begin{prop}
The boundary of $V$ is $ B \cap F$.
\end{prop}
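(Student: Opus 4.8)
\begin{proof}
The idea is that on $\partial D^4$ the defining equation of $V$ collapses, and that $V$ meets $\partial D^4$ transversally, so that $V$ is a manifold with boundary $V\cap\partial D^4$.

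\textbf{Step 1: identifying $V\cap\partial D^4$.} A point $(x,y)\in\partial D^4$ satisfies $|x|^2+|y|^2=1$, so the equation $2xy=1-|x|^2-|y|^2$ becomes $xy=0$, i.e.\ $x=0$ or $y=0$. Thus
\[
V\cap\partial D^4=\bigl\{(x,y):|x|^2+|y|^2=1,\ xy=0\bigr\}=\bigl(S^1\times\{0\}\bigr)\cup\bigl(\{0\}\times S^1\bigr)=B\cap F,
\]
since $B_1\cap F$ and $B_2\cap F$ have fibers $S^1\times\{0\}$ and $\{0\}\times S^1$. (In Case 1 the same computation gives the four points $(\pm1,0),(0,\pm1)$.) It remains to show that this subset is exactly the manifold boundary of $V$.

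\textbf{Step 2: Case 1.} Here $V$ is the union of the two chords $\{x+y=1\}\cap D^2$ and $\{x+y=-1\}\cap D^2$; these lines meet $S^1$ transversally, so $V$ is a $1$-manifold with boundary equal to the four endpoints found in Step 1.

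\textbf{Step 3: Case 2.} Write $\rho(x_1,x_2,y_1,y_2)=|x|^2+|y|^2$, so $D^4=\rho^{-1}[0,1]$ and $\partial D^4=\rho^{-1}(1)$. By Proposition \ref{prop:v} the set $\Sigma:=v^{-1}(0)\subset\R^4$ is a smooth closed surface, and $V=\Sigma\cap D^4$. On $\Sigma\cap\interior(D^4)$, which is open in $\Sigma$, there is no boundary; so it suffices to prove that $\Sigma$ is transverse to $\partial D^4$ in $\R^4$, for then $V=\Sigma\cap D^4$ is a smooth manifold with boundary $\Sigma\cap\partial D^4=V\cap\partial D^4$, which by Step 1 is $B\cap F$.

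\textbf{Step 4: the transversality check.} At $p\in\Sigma\cap\partial D^4$ transversality fails only if $d\rho_p=2(x_1,x_2,y_1,y_2)$ annihilates $T_p\Sigma=\ker dv_p$, i.e.\ only if $d\rho_p$ is a linear combination of the two rows of $dv_p$. At such $p$ we have $x=0$ or $y=0$; since $v$ and $\rho$ are symmetric under $(x_1,x_2)\leftrightarrow(y_1,y_2)$, it is enough to treat $y=0$, $|x|=1$. There
\[
dv_p=2\begin{bmatrix} x_1 & x_2 & x_1 & -x_2\\ 0 & 0 & x_2 & x_1\end{bmatrix},\qquad d\rho_p=2(x_1,x_2,0,0),
\]
and solving $a(x_1,x_2,x_1,-x_2)+b(0,0,x_2,x_1)=(x_1,x_2,0,0)$ forces $a=1$ (as $(x_1,x_2)\ne(0,0)$) and then $bx_2=-x_1$, $bx_1=x_2$, which has no real solution when $|x|=1$. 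Hence $\Sigma$ is transverse to $\partial D^4$, and the proof is complete.
\end{proof}

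\noindent The argument is mostly formal; the one point requiring care is Step 4, where transversality must be verified at \emph{every} point of the two circles comprising $V\cap\partial D^4$, not merely generically. The symmetry of $v$ halves the work, and the remaining step is the short linear-algebra computation above.
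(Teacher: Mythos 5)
Your proof is correct and follows essentially the same route as the paper: compute $V\cap\partial F$ directly, then verify that $V$ meets $\partial F$ transversally. The only difference is cosmetic — the paper phrases transversality as ``$dv(p)$ restricted to $T_p(\partial F)$ has rank $2$'' and declares it easy to check, while you use the equivalent dual formulation (that $d\rho_p$ is not a combination of the rows of $dv_p$) and actually carry out the linear-algebra verification; your small-print remark that $\Sigma=v^{-1}(0)$ is a ``closed surface'' is not accurate (it is noncompact), but nothing in the argument uses this.
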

\begin{proof}
\[
\begin{aligned}
\quad V \cap \partial F &= \left\{ (x,y) \in D^4 \mid 2xy = 1-|x|^2 - |y|^2, |x|^2 + |y|^2 = 1 \right\} = \\
 &= \left\{ (x,y) \in D^4 \mid xy = 0, |x|^2 + |y|^2 = 1 \right\} = \\
 &= \left\{ (x,y) \in D^4 \mid (y = 0 \text{ and } |x| = 1) \text{ or } (x = 0 \text{ and } |y| = 1) \right\} = \\
 &= S^1 \times \{ 0 \} \bigsqcup \{ 0 \} \times S^1 = B \cap F \text{\,.}
\end{aligned}
\]
It is easy to check that $V$ is transversal to $\partial F$ (for a point $p \in V \cap \partial F$ the restriction of $\mathrm{d}v(p)$ to the tangent space $\Tang_p(\partial F)$ of $\partial F$ is a rank-2 linear map). Therefore $\partial V = V \cap \partial F = B \cap F$.
\end{proof}

\begin{prop}
$V \approx D^1 \times S^1$. \quad \big[$V \approx D^1 \times S^0$.\big]
\end{prop}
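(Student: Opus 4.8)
The plan is to identify $V$ explicitly as a submanifold of $D^4 \subset \C^2$ by parametrizing it, and then recognize the resulting parameter space. First I would rewrite the defining equation $2xy = 1 - |x|^2 - |y|^2$ in a way that exposes a product structure. Writing $x = |x|e^{i\varphi}$ and $y = |y|e^{i\psi}$, the left-hand side $2xy$ has argument $\varphi + \psi$ while the right-hand side is real and, on $D^4$, nonnegative (since $|x|^2 + |y|^2 \le 1$); in fact it is strictly positive unless $|x|^2 + |y|^2 = 1$, in which case we are on $\partial V = B \cap F$. Away from the boundary this forces $\varphi + \psi \equiv 0 \pmod{2\pi}$, i.e.\ $y$ has argument $-\varphi$, so $xy = |x||y|$ is a nonnegative real number, and the equation becomes the \emph{real} equation $2|x||y| = 1 - |x|^2 - |y|^2$, equivalently $|x| + |y| = 1$ (this is exactly the identity already recorded for $\tilde V$ in the introduction). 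So on the interior, $V$ is cut out by the two real conditions $|x| + |y| = 1$ and $\arg x + \arg y = 0$.

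Next I would turn this description into a global parametrization. The condition $|x| + |y| = 1$ with $|x|, |y| \ge 0$ is parametrized by $|x| = s$, $|y| = 1 - s$ for $s \in [0,1]$, and then $x = s\,e^{i\varphi}$, $y = (1-s)e^{-i\varphi}$ with $\varphi \in S^1$; note that when $s = 0$ the phase $\varphi$ of $x$ is irrelevant but the phase $-\varphi$ of $y$ still makes sense, and symmetrically at $s = 1$, so there is no collapsing — the map $[0,1] \times S^1 \to V$, $(s,\varphi) \mapsto (s e^{i\varphi}, (1-s)e^{-i\varphi})$, is a continuous bijection, and I would check it is a homeomorphism (it is a continuous bijection from a compact space to a Hausdorff space). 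One reassurance: at $s = 0, 1$ this recovers exactly the boundary circles $\{0\}\times S^1$ and $S^1 \times \{0\}$ computed in the previous proposition, so the boundary matches $B \cap F$ as it must. Combined with Proposition~\ref{prop:v} (that $V$ is a smooth submanifold), this gives $V \approx D^1 \times S^1$. In Case~1 the identical argument with $S^0$ in place of $S^1$ (only the sign $\varepsilon = \pm 1$ of the "phase" survives) gives the two line segments, i.e.\ $V \approx D^1 \times S^0$.

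The main obstacle I anticipate is checking that the proposed parametrization is genuinely a homeomorphism \emph{onto all of} $V$ and has no hidden singular or exceptional behavior at the ends $s = 0, 1$; one must verify both that every point of $V$ is hit (the argument analysis above, done carefully including the boundary case $|x|^2 + |y|^2 = 1$) and that the map is injective (if $s e^{i\varphi} = s' e^{i\varphi'}$ and $(1-s)e^{-i\varphi} = (1-s')e^{-i\varphi'}$ then $s = s'$ from the moduli, and the phases agree whenever $0 < s < 1$, while for $s \in \{0,1\}$ the single surviving phase still pins down the point). Everything else — that $V$ is a manifold with boundary, that the parametrization is smooth away from possibly the ends — is either already established or routine, so I would keep those remarks brief.
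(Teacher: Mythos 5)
Your argument is correct, and the underlying algebra --- that interior points of $V$ satisfy $\arg y = -\arg x$ and $|x|+|y|=1$ --- is exactly what the paper exploits. The packaging, however, is genuinely different: you write down the explicit map $\Phi : [0,1]\times S^1\to V$, $(s,\varphi)\mapsto(se^{i\varphi},(1-s)e^{-i\varphi})$, verify that it lands in $V$, show it is a bijection, and conclude by compactness; the paper instead analyzes the projection $\pi:V\to D^2$, $(x,y)\mapsto x$, shows $\pi^{-1}(0)=\{0\}\times S^1$ while $\pi^{-1}(x)$ is a single point for every $x\neq 0$, and deduces $\interior V \approx \interior D^2\setminus\{0\}$, leaving the passage from this to the compact statement $V\approx D^1\times S^1$ implicit. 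The two maps are related by polar coordinates on $D^2$ (which collapse one boundary circle to the origin), so your $\Phi$ is in effect a resolution of the paper's $\pi$. Your route is a bit more explicit about what happens on the two boundary circles $s=0,1$ and closes the argument without appealing to a ``fill in the boundary of a compact surface'' step, which is a small but real improvement in completeness; the paper's route is shorter to state but asks the reader to supply that last step.
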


\begin{proof}
Let $\pi : V \rightarrow D^2$ be the projection $(x,y) \mapsto x$. Then $\pi^{-1}(0) = \{ 0 \} \times S^1$, and $\pi^{-1}(x) = (x,0)$ if $|x| = 1$. If $0 < |x| < 1$, and $(x,y) \in \pi^{-1}(x)$, then $(x,y)$ is an interior point of $V$, and of $D^4$. Therefore $1 - |x|^2 - |y|^2 > 0$, so from $2xy = 1 - |x|^2 - |y|^2$ it follows that $y \neq 0$, and $\arg y = - \arg x$. Also $2|xy| = 1 - |x|^2 - |y|^2 \Leftrightarrow (|y| + |x|)^2 = 1 \Leftrightarrow |y| = 1 - |x|$ . Therefore $\pi^{-1}(x)$ consists of a single point for all $x$ with $0 < |x| < 1$, so $\interior V \approx \interior D^2 \setminus \{ 0 \}$.
\end{proof}

Recall that $W$ is the union of $V$s in all fibers of $\T$. By the above propositions this is a (smooth) embedded submanifold in $T$ with boundary $B$. So in Case 1 $Y = ((Y_1 \cup Y_2) \setminus T) \cup W$ is a suitable representative. In Case 2 we need to prove that it is co-oriented.

\begin{prop} \label{prop:co-or}
$Y$ has a co-orientation which extends those of $Y_1 \setminus T$ and $Y_2 \setminus T$. 
\end{prop}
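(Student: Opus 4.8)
The plan is to read off a co-orientation of $W$ from the equation defining $V$, to keep the given co-orientations of $Y_1$ and $Y_2$ outside $T$, and to check that the three fit together along $B$; the only genuine work is a computation in a single fibre of $\T$.

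Write the map $v$ from the proof of Proposition \ref{prop:v} as $v(x,y)=2xy+|x|^{2}+|y|^{2}-1$. Since $|\alpha|=1$ we have $v(\alpha x,\alpha^{-1}y)=v(x,y)$ for every $\alpha\in U(1)$, so $v$ is invariant under the structure group action $\alpha(x,y)=(\alpha x,\alpha^{-1}y)$ of $\T$; hence it descends to a smooth function $T\to\C$ whose zero set is $W$. By Proposition \ref{prop:v} the origin is a regular value of $v$ on each fibre $D^{4}$, and therefore also of the function $T\to\C$ (already the fibre directions map onto the tangent space of the target), and $V$ meets $\partial D^{4}$ transversally; so $W$ is a neat submanifold of $T$, and $\mathrm{d}v$ trivialises its normal bundle $\nu(W\subset T)$. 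This trivialisation, together with the standard orientation of $\C$, is a co-orientation of $W$.

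A co-orientation of $Y$ extending the given co-orientations of $Y_1\setminus T$ and $Y_2\setminus T$ is the same as a co-orientation of $W$ that induces on $B\subset\partial T$ the same co-orientation as the given ones do (using that $W$ and the $Y_i\setminus T$ are neat, and that the corner smoothing of $Y$ along $B$ does not affect normal bundles there); so it suffices to compare the co-orientations induced on $B_1$ and $B_2$ in $\partial T$. This is a fibrewise check. In a fibre $D^4=D^2\times D^2$ a point of $B_1$ has the form $(x_0,0)$ with $|x_0|=1$, and a short computation shows that $\mathrm{d}v$ at that point, restricted to the tangent space of the boundary sphere $S^3=\partial D^4$, annihilates the tangent line to $B_1$ and induces on $\nu(B_1\subset S^3)$ --- canonically the second summand $\{0\}\times\C$ --- the map $\delta y\mapsto 2x_0\,\delta y$. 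This map is $\C$-linear, hence orientation-preserving, so the co-orientation of $W$ induces on $B_1$ the complex orientation of the second $D^2$-factor of the fibre of $\T$. By the construction of the tubular neighbourhood $T$ this factor is a disk bundle in $\nu(Y_1\subset X)|_M$, and this identification may be taken orientation-preserving for the given co-orientation of $Y_1$ --- the structure group of $\T$ acts $\C$-linearly on each $D^2$-factor, so the factorwise complex orientations are globally defined. Hence the co-orientation of $W$ agrees with that of $Y_1$ over $B_1$. The symmetric computation at a point $(0,y_0)$, $|y_0|=1$, of $B_2$, where $\mathrm{d}v$ restricted to the tangent space of $S^3$ induces $\delta x\mapsto 2y_0\,\delta x$ on the first $D^2$-factor $\cong\nu(Y_2\subset X)|_M$, disposes of $B_2$, and the three co-orientations assemble into the desired co-orientation of $Y$.

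The analytic content is light --- the $U(1)$-invariance of $v$ and the $\C$-linearity of $\mathrm{d}v$ along the two boundary circles are both immediate --- so the point that needs care is the final bookkeeping: matching the two $D^2$-factors of the fibre of $\T$ with disk bundles in $\nu(Y_1\subset X)|_M$ and $\nu(Y_2\subset X)|_M$ in a way that is orientation-preserving and compatible with the $U(1)$-structure of $\T$.
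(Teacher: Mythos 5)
Your proof is correct and takes essentially the same route as the paper: both define the co-orientation of $W$ by the (fibrewise $U(1)$-equivariant) differential $\mathrm{d}v$ and then verify compatibility with the co-orientations of $Y_1$, $Y_2$ by a computation at $B_1$, $B_2$. The paper works with the two real rows of $\mathrm{d}v$ and a $2\times 2$ determinant, while you package the same derivative as a $\C$-valued $1$-form and read off orientation-preservation from $\C$-linearity of $\delta y\mapsto 2x_0\,\delta y$ (and symmetrically $\delta x\mapsto 2y_0\,\delta x$); this is cleaner but equivalent. Your observation that $v$ itself is $U(1)$-invariant, so that $\mathrm{d}v$ gives a globally defined normal framing of $W\subset T$, replaces the paper's explicit equivariance check on the vectors $(x+\bar y, y+\bar x)$ and $(i\bar y, i\bar x)$; again, equivalent content, slightly tidier formulation.
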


\begin{proof}
We have to show that $W$ is co-orientable, and its co-orientation coincides with that of $(Y_1 \cup Y_2) \setminus T$ over $B$. The latter is done by identifying the restrictions to $B$ of the two normal bundles, and showing that this identification preserves orientation.

The normal bundle of $W$ in $X$, restricted to $V = F \cap W$ in any fiber $F$ of $\T$ is just the normal bundle of $V$ in $F$. For any point $p \in V$, the rows of $\mathrm{d}v(p)$ (elements of $\R^4 = \Tang_pF$, the tangent space of $F$ at $p$) form a basis of the normal space of $V$ (they are, by definition, orthogonal to $\Ker \mathrm{d}v(p) = \Tang_pV$, and they are linearly independent). This basis, defined at each point $p$, determines a co-orientation of $V$. (This basis is independent of the choice of the identification of $F$ with $D^4$, hence the co-orientation is well-defined. Indeed, in the point $p=(x,y)$ the vectors are $(x+\bar{y},y+\bar{x})$ and $(i\bar{y},i\bar{x})$, and these are equivariant under the $U(1)$-action.)

The normal bundle of $Y_1$ in $X$, restricted to $D^2 \times \{ 0 \} = F \cap Y_1$ is the normal bundle of $D^2 \times \{ 0 \}$ in $F$, in any fiber of it the vectors $(0,0,1,0)$, $(0,0,0,1)$ form a positively oriented basis (if an identification $F \approx D^4$ is fixed).

At a point $p = (x_1,x_2,0,0) \in S^1 \times \{ 0 \}$ the projections of the rows of $\mathrm{d}v(p)$ to the sub\-space $\left<(0,0,1,0),(0,0,0,1)\right>$ are $(0,0,x_1,-x_2)$ and $(0,0,x_2,x_1)$. Since $\det \bigl( \begin{smallmatrix} x_1 & -x_2 \\ x_2 & \hfill x_1 \end{smallmatrix} \bigr) = x_1^2 + x_2^2 = 1 > 0$, we know that the projection is an isomorphism, so it identifies the normal spaces of $V$ and $D^2 \times \{ 0 \}$, and it preserves orientation.

We can show analogously that the co-orientations of $W$ and $Y_2$ coincide over $\{ 0 \} \times S^1$.
\end{proof}

\begin{proof}[Proof of Proposition \ref{prop:or1}]
First note that if $Y_1$ and $Y_2$ are both co-oriented, then their normal bundles are trivial, so $\T$ is a trivial bundle too. 

If $Y_1$ and $Y_2$ are oriented, then $\T$ is an orientable $D^2$-bundle. An orientation can be constructed as follows. At any point of $M$ choose a local orientation of $M$. Choose a normal vector of $M$ in $Y_1$ such that the local orientation and this vector determine the positive orientation of $Y_1$. Choose a normal vector in $Y_2$ in a similar way. Then these two vectors determine an orientation of the fiber of $\T$ at this point, and this orientation is well-defined (independent of the choice of the local orientation of $M$).

So in both cases we are in the special case ($\T$ can be induced from ${\R}P^{\infty} \times \ast$).

We will define $W$ in a slightly different way than before. Let $F$ be a fiber of $\T$, and
\[
\begin{aligned}
V_1 &= \left\{ (x,y) \in F \approx D^2 \subset \R^2 \mid 2xy = 1 - x^2 - y^2 \right\} \\
V_2 &= \left\{ (x,y) \in F \approx D^2 \subset \R^2 \mid -2xy = 1 - x^2 - y^2 \right\} 
\end{aligned}
\]
These are well-defined (invariant under the $O(1)$-action). In fact $V_1$ is just $V$, and $V_2$ is its image under the isometry $t : D^2 \rightarrow D^2$, $t(x,y) = (x,-y)$. Therefore $V_2$ is a submanifold too, and is also diffeomorphic to $D^1 \times S^0$. The sets $S^0 \times \{ 0 \}$ and $\{ 0 \} \times S^0$ are invariant under $t$, so $\partial V_1 = \partial V_2$.

In any fiber $F$ of $\T$ we will choose one of $V_1$ and $V_2$ according to the following rules.

If $Y_1$ and $Y_2$ are co-oriented, then their normal bundles, restricted to the subsets $Y_1 \cap F = D^1 \times \{ 0 \}$ and $Y_2 \cap F = \{ 0 \} \times D^1$ are just the normal bundles of $D^1 \times \{ 0 \}$ and $\{ 0 \} \times D^1$ in $F$. Exactly one of $V_1$ and $V_2$ has a co-orientation in $F$ which is compatible with the orientations of these normal bundles, this $V_i$ will be chosen. (See Figure 3.)

If $Y_1$ and $Y_2$ are oriented, then in the fiber $F$ of $\T$ over $p \in M$ we choose $V_i$ the following way. First we choose a local orientation of $M$ at $p$. We orient $D^1 \times \{ 0 \}$ such that the local orientation of $M$, and this orientation together determine the positive orientation of $Y_1$ at $p$. We orient $\{ 0 \} \times D^1$ in a similar way. Exactly one of $V_1$ and $V_2$ has an orientation which is compatible with these orientations, this $V_i$ will be chosen. (See Figure 4.) (This choice is independent of the choice of local orientation of $M$.)

\begin{figure}[h]
\begin{center}
\includegraphics[scale=0.5]{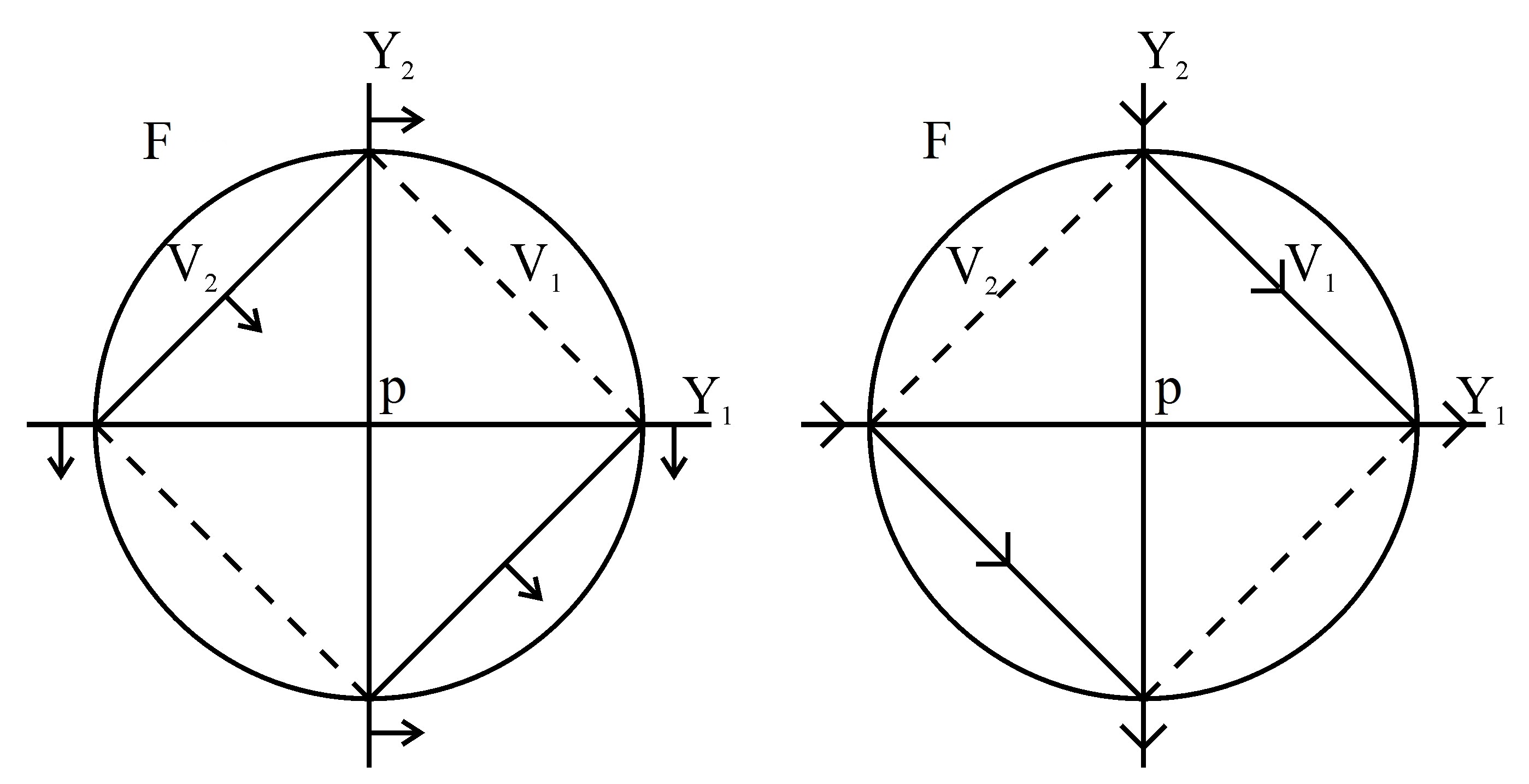}
\end{center}
\begin{center}
Figures 3--4.
\end{center}
\end{figure}

Let $W$ be the subset of $T$ which is the union of the chosen $V_i$-s in all fibers. Then $W$ is a locally trivial bundle over $M$ with fiber $D^1 \times S^0$, and its boundary is $\partial W = B$. By construction $W$ has a co-orientation or orientation compatible with that of $Y_1$ and $Y_2$.
\end{proof}

\subsection{General case} \label{ssec:gen}

First let us recall a few facts about ${\C}P^{\infty}$.

Let $\C^{\infty} = \{ a = (a_0, a_1, a_2, \dots) \mid a_i \in \C \text{, $a_i \neq 0$ for only finitely many $i$} \}$, then \break ${\C}P^{\infty} = (\C^{\infty} \setminus \{ 0 \}) / {\sim}$, where $a \sim \lambda a$ for all $a \in \C^{\infty} \setminus \{ 0 \}$ and $\lambda \in \C \setminus \{ 0 \}$. We define $\C^{\infty - 1} = \{ a \in \C^{\infty} \mid a_0 = 0 \}$, and ${\C}P^{\infty - 1} = (\C^{\infty - 1} \setminus \{ 0 \}) / {\sim}$. 

For $a \in \C^{\infty}$ let $a' = (a_1, a_2, \dots)$, so $a = (a_0, a')$, and let $[a]$ be its image in ${\C}P^{\infty}$, if $a \neq 0$. The norm $\| a \|$ makes sense in $\C^{\infty}$ too, since the sum $\| a \|^2 = |a_0|^2 + |a_1|^2 + \dots$ is finite. 

Let $\ast = [1, 0, 0, \dots] \in {\C}P^{\infty}$, then ${\C}P^{\infty-1}$ is a deformation retract of ${\C}P^{\infty} \setminus \ast$. In fact we can define a bundle structure with fiber $\C$ and structure group $U(1)$ on ${\C}P^{\infty} \setminus \ast$, which turns it into the normal bundle $\nu$ of ${\C}P^{\infty-1}$ in ${\C}P^{\infty}$. We choose a system of contractible open subsets $\hat{U}_i$ covering ${\C}P^{\infty-1}$. Over such a $\hat{U}_i$ we can continuously choose representatives $(0,b')$ for each point $[0,b']$ such that $\| b' \| = 1$ (this is equivalent to choosing a section of the tautological $S^1$-bundle over ${\C}P^{\infty-1}$, which can be done over a contractible subset). Consider the maps $\hat{\varphi}_i : \hat{U}_i \times \C \rightarrow {\C}P^{\infty} \setminus \ast$, \linebreak $([0, b'], g) \mapsto [g, b']$. These define a locally trivial bundle, and the structure group is $U(1)$, because if $[0, b'_1] = [0, b'_2] \in \hat{U}_{i_1} \cap \hat{U}_{i_2}$, then $\hat{\varphi}_{i_1}([0, b'_1], g_1) = \hat{\varphi}_{i_2}([0, b'_2], g_2)$ iff $g_2 = \lambda g_1$, where $\lambda$ is determined by $b'_2 = \lambda b'_1$, so $\lambda \in U(1)$ (since $\| b'_1 \| = \| b'_2 \| = 1$). Let $\hat{\pi} : {\C}P^{\infty} \setminus \ast \rightarrow {\C}P^{\infty-1}$ denote the projection of $\nu$.

Similarly let $\tilde{\nu}$ be the normal bundle of ${\C}P^{\infty} \times {\C}P^{\infty-1}$ in ${\C}P^{\infty} \times {\C}P^{\infty}$, which is the pullback of $\nu$ by the projection ${\C}P^{\infty} \times {\C}P^{\infty-1} \rightarrow {\C}P^{\infty-1}$. Denote the projection of $\tilde{\nu}$ by $\tilde{\pi} : {\C}P^{\infty} \times ({\C}P^{\infty} \setminus \ast) \rightarrow {\C}P^{\infty} \times {\C}P^{\infty-1}$. 

The space ${\C}P^{\infty} \setminus {\C}P^{\infty-1}$ is contractible ($\ast$ is its deformation retract, a homotopy between the identity and the constant map is given by $h_t([1, c']) = [1, tc']$, $t \in [0,1]$).

\bek
Now let us return to our original problem. We consider the case of a general $f : M \rightarrow {\C}P^{\infty} \times {\C}P^{\infty}$. 

We may assume that $f$ is transverse to ${\C}P^{\infty} \times {\C}P^{\infty-1}$. Then we can define $N = f^{-1}({\C}P^{\infty} \times {\C}P^{\infty-1})$, this is a codimension-2 co-orientable \big[codimension-1 arbitrary\big] submanifold in $M$. 

The bundles $\xi \big| _{{\C}P^{\infty} \times ({\C}P^{\infty} \setminus {\C}P^{\infty-1})}$ and $(\id \times h_0)^*\big(\xi \big| _{{\C}P^{\infty} \times \ast}\big)$ are isomorphic (because $\id \times h_0$ is a deformation retraction). The latter is a $U(1)$-bundle, so this is also true for the former. Since $\T$ is induced from $\xi$, over $f^{-1}({\C}P^{\infty} \times ({\C}P^{\infty} \setminus {\C}P^{\infty-1})) = M \setminus N$ the restriction of $\T$ is also an $U(1)$-bundle, as in the previous case. So we can apply the previous construction, we denote the result by $W' \subset T \big| _{M \setminus N}$. (Note that this depends on the isomorphism between $\xi \big| _{{\C}P^{\infty} \times ({\C}P^{\infty} \setminus {\C}P^{\infty-1})}$ and $(\id \times h_0)^*\big(\xi \big| _{{\C}P^{\infty} \times \ast}\big)$, we will specify this isomorphism later.)

Let $S$ be a tubular neighbourhood of $N$ in $M$, it is the total space of a bundle $\S$ with fiber $D^2$ and structure group $U(1)$. This is the pullback of the disk bundle of $\tilde{\nu}$ by $f \big| _N : N \rightarrow {\C}P^{\infty} \times {\C}P^{\infty-1}$. Let $\pi : S \rightarrow N$ denote the projection of $\S$. The following lemma describes the structure of $W'$ around $N$.

\begin{lem} \label{lem:loctriv}
There exist 
\begin{compactitem}
\item trivializing neighbourhoods $U_k \subseteq N$ for both $\S$ and $\T \big| _{N}$, 
\item local trivializations $\varphi_k : U_k \times D^2 \rightarrow S \big| _{U_k}$, $\psi_k : U_k \times D^4 \rightarrow T \big| _{U_k}$, and 
\item an isomorphism $H : T \big| _{S} \rightarrow \pi^*(T \big| _{N})$,
\end{compactitem}
such that 
\begin{compactitem}
\item if $\varphi_{k_1,k_2} : U_{k_1} \cap U_{k_2} \rightarrow U(1)$ denotes the transition map between $\varphi_{k_1}$ and $\varphi_{k_2}$ (ie.\ at each point $p$ its value is the map $\varphi_{k_2}^{-1} \circ \varphi_{k_1} \big| _{\{p\} \times \C} : \C \rightarrow \C$, expressed as an element of $U(1)$), $\overline{\varphi_{k_1,k_2}}$ is its complex conjugate, 

$\psi_{k_1,k_2} : U_{k_1} \cap U_{k_2} \rightarrow U(1) \times U(1)$ is the transition map between $\psi_{k_1}$ and $\psi_{k_2}$, 

and $\pr_2 : U(1) \times U(1) \rightarrow U(1)$ is the projection to the second component,

then $\overline{\varphi_{k_1,k_2}} = \pr_2 \circ \psi_{k_1,k_2}$; and
\item if $\psi'_k : S \big| _{U_k} \times D^4 \rightarrow T \big| _{S|_{U_k}}$ denotes the trivialization coming from $\psi_k$ and $H$, ie.\ $\psi'_k(q,(x,y)) = H^{-1}(q, \psi_k(p,(x,y)))$ (where $q \in S \big| _{U_k}$, $p = \pi(q) \in U_k$, $(x,y) \in D^4$, and $(q, \psi_k(p,(x,y))) \in \pi^*(T \big| _{N})$ -- recall that $\pi^*(T \big| _{N})$ is defined via a pullback diagram, so it is a subset of $S \times T \big| _{N}$),

then for any $q = \varphi_k(p, t \theta) \in S \big| _{U_k}$ ($t \in (0,1]$, $\theta \in S^1$) if we use $\psi'_k$ to identify the fiber $F_q$ of $\T$ over $q$ and $D^4$, then  
\[
F_q \cap W' = \left\{ (x,y) \in F_q \approx D^4 \mid 2x\theta y = 1-|x|^2 - |y|^2 \right\} \text{\,.}
\]
\end{compactitem}
\end{lem}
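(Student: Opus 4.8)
\medskip
\noindent\textbf{Proof proposal.}
The plan is to trace everything back to the universal bundle $\xi$ and to make all the required choices from explicit models. First I would record a concrete description of $\xi$ near ${\C}P^{\infty}\times{\C}P^{\infty-1}$. Writing $L_1$ and $L_2$ for the two tautological line bundles over ${\C}P^{\infty}\times{\C}P^{\infty}$, the bundle $\xi$ (with the action $(\alpha,\beta)(x,y)=(\alpha(x),\alpha^{-1}\beta(y))$) is the unit disk bundle of $L_1\oplus(\bar L_1\otimes L_2)$, with $x$ a vector of $L_1$ and $y$ a vector of $\bar L_1\otimes L_2$ -- so the first $U(1)$ acts on $x$ standardly and on $y$ by conjugation, and the second $U(1)$ acts on the $L_2$-tensor-factor of $y$. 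Over ${\C}P^{\infty}\times\ast$, where $L_2$ is canonically trivial, this is precisely the $U(1)$-bundle of the special case (Lemma \ref{lem:gr}): there $xy\in L_1\otimes\bar L_1\cong\underline{\C}$ is a number, and $V=\{2xy=1-|x|^2-|y|^2\}$. I would also note that the normal bundle $\nu$ of ${\C}P^{\infty-1}$ in ${\C}P^{\infty}$, as described by the charts $\hat{\varphi}_i$ (transition function: multiplication by $\lambda$, where $b'_2=\lambda b'_1$), is the conjugate of $L_2|_{{\C}P^{\infty-1}}$ (whose tautological trivializations over the $\hat U_i$ have transition function multiplication by $\lambda^{-1}$).

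Next I would fix the trivializations. Over $\hat U_i$ I use the unit-norm section $[0,b']\mapsto(0,b')$ and, crossing with a chart of the first ${\C}P^{\infty}$, I use these to trivialize $\xi$ both over ${\C}P^{\infty}\times\hat U_i$ and over the surrounding tube ${\C}P^{\infty}\times\hat{\varphi}_i(\hat U_i\times\C)$, by representing a tube point $[g,b']$ as $(g,b')$ with $\|b'\|=1$; thus $\nu$ gets fibre coordinate $g$, and the natural isomorphism $\xi|_{\mathrm{tube}}\cong\tilde{\pi}^*(\xi|_{{\C}P^{\infty}\times{\C}P^{\infty-1}})$ induced by the deformation retraction $[g,b']\mapsto[sg,b']$ becomes the identity in these coordinates. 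Pulling back by $f$ produces the $U_k$, the local trivializations $\varphi_k$ of $\S$ and $\psi_k$ of $\T|_N$, the isomorphism $H$, and the induced $\psi'_k$; and the first bulleted conclusion is then immediate: $\varphi_{k_1,k_2}$ is the transition function of $\nu$, i.e.\ multiplication by $\lambda$, while $\pr_2\circ\psi_{k_1,k_2}$ is the transition function of $L_2|_{{\C}P^{\infty-1}}$, i.e.\ multiplication by $\lambda^{-1}=\bar\lambda$, so $\overline{\varphi_{k_1,k_2}}=\pr_2\circ\psi_{k_1,k_2}$.

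For the second bulleted conclusion I would first pin down the isomorphism that defines $W'$: I choose the isomorphism $\xi|_{{\C}P^{\infty}\times({\C}P^{\infty}\setminus{\C}P^{\infty-1})}\cong(\id\times h_0)^*(\xi|_{{\C}P^{\infty}\times\ast})$ coming from trivializing $L_2$ over ${\C}P^{\infty}\setminus{\C}P^{\infty-1}$ by the representative with first coordinate $1$ (which agrees, under $h_0$, with the canonical trivialization over $\ast$), so that in that trivialization $W'$ has fibre $V=\{2xy=1-|x|^2-|y|^2\}$ (Proposition \ref{prop:v}). It then remains to compare, over a tube point $[g,b']$ with $g\ne0$, this trivialization of $L_2$ with the one of the previous paragraph: from $(1,b'/g)=\frac{1}{g}(g,b')$ one reads off that, after unitary normalization, their ratio is multiplication by $g/|g|$. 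Since $x$ transforms under $L_1$ and $y$ under $\bar L_1\otimes L_2$ with a common $L_1$-trivialization, passing from the $V$-coordinates $(x,y)$ to the $\psi'_k$-coordinates multiplies $y$ by $\overline{g/|g|}$ and fixes $x$; and when we pull back by $f$, a point $q=\varphi_k(p,t\theta)$ corresponds to $g=t\theta$, hence $g/|g|=\theta$, so $V$ becomes $\left\{(x,y)\in D^4\mid 2x\theta y=1-|x|^2-|y|^2\right\}$, as claimed.

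I expect the main obstacle to be exactly this last piece of bookkeeping: arranging a single coherent choice of the contraction $[g,b']\mapsto[sg,b']$, the isomorphism $H$, the trivializations $\psi_k$, the isomorphism defining the $U(1)$-structure on $\T|_{M\setminus N}$ (hence $W'$), and the parametrization of the normal directions, so that the twist in the $y$-coordinate is precisely by $\theta$ (and not by $\bar\theta$ or $1$) and the contributions of the first ${\C}P^{\infty}$ factor cancel throughout. A secondary technical point is to organize the $\hat U_i$ together with charts of the first ${\C}P^{\infty}$ into a genuine common trivializing cover $\{U_k\}$ of $N$ for both $\S$ and $\T|_N$, and to verify the overlap conditions on the nose.
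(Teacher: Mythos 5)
Your proposal is correct and follows essentially the same route as the paper: both reduce to the universal $(U(1)\times U(1))$-bundle over ${\C}P^\infty\times{\C}P^\infty$, use unit-norm representatives over the $\hat U_i$ to trivialize both $\tilde\nu$ and the second tensor factor (which gives $\overline{\varphi_{k_1,k_2}} = \pr_2\circ\psi_{k_1,k_2}$), and extract the phase $\theta$ by comparing the tube trivialization with the first-coordinate-$1$ trivialization of $L_2$ over ${\C}P^\infty\setminus{\C}P^{\infty-1}$. The paper carries out the bookkeeping you flag as the potential obstacle by writing $\hat H$, $\tilde\psi'_k$, $\omega$ and $\tilde\sigma_j$ in explicit coordinates and computing the transition $(1,\theta)$ directly; your identification of $\xi$ with $L_1\oplus(\bar L_1\otimes L_2)$ is an accurate and cleaner way to keep track of where each coordinate lives, and the conjugation in the $y$-coordinate comes out with the right sign.
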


\begin{proof}
Let $\gamma$ denote the tautological line bundle over ${\C}P^{\infty}$. 

Then $\gamma \times \gamma$ (over ${\C}P^{\infty} \times {\C}P^{\infty}$) is the universal bundle with fiber $\C^2$ and structure group $U(1) \times U(1)$ (with its usual action on $\C^2$). This is identical to the universal bundle $\xi$ in the sense that a system of trivializing neighbourhoods and abstract transition maps for one of them will form such a system for the other one as well. (By abstract transition maps we mean maps from intersections of trivializing neighbourhoods to $U(1) \times U(1)$. To produce actual gluing maps we need to compose these with the appropriate action of $U(1) \times U(1)$ on the fiber.) Since $\T$ is induced from $\xi$, and local trivializations can be pulled back, we can construct local trivializations of $\T$ via those of $\gamma \times \gamma$.

We can choose contractible open subsets $\hat{U}'_j$ covering ${\C}P^{\infty}$. If in each $\hat{U}'_j$ we fix a representative $a$ for each point $[a]$, these determine a map $\hat{\sigma}_j : \hat{U}'_j \times \C \rightarrow \gamma \big| _{\hat{U}'_j}$, $([a], x) \mapsto ([a], xa)$ which is a trivialization of $\gamma \big| _{\hat{U}'_j}$.

Consider all possible products $\hat{U}'_j \times \hat{U}_i$. These will form a system of contractible open subsets $\tilde{U}_k$ covering ${\C}P^{\infty} \times {\C}P^{\infty-1}$. The map $\tilde{\varphi}_k = \id_{\hat{U}'_j} \times \hat{\varphi}_i : \tilde{U}_k \times \C \rightarrow \break {\C}P^{\infty} \times ({\C}P^{\infty} \setminus \ast)$, $(([a], [0, b']), g) \mapsto ([a], [g, b'])$ is a local trivialization of $\tilde{\nu}$. 

Let $\hat{\psi}_i : \hat{U}_i \times \C \rightarrow \gamma \big| _{\hat{U}_i}$, $([0, b'], y) \mapsto ([0, b'], (0, y b'))$ be a local trivialization of \allowbreak $\gamma \big| _{{\C}P^{\infty-1}}$. For each $\tilde{U}_k = \hat{U}'_j \times \hat{U}_i$ we define the map $\tilde{\psi}_k : \tilde{U}_k \times \C^2 \rightarrow (\gamma \times \gamma) \big| _{\tilde{U}_k}$ by $(([a], [0, b']), \allowbreak (x, y)) \mapsto \big(([a], [0, b']), (xa, (0, y b')) \big)$. (This is in fact $\hat{\sigma}_j \times \hat{\psi}_i$.)

Let $\hat{H} : \gamma \big| _{{\C}P^{\infty} \setminus \ast} \rightarrow \hat{\pi}^*\left(\gamma \big| _{{\C}P^{\infty-1}}\right)$, $([d], \lambda d) \mapsto ([d], ([0, d'], (0, \lambda \frac{\| d \|}{\| d' \|} d')))$, this is an isomorphism (it is well-defined, ie.\ independent of the representative $d$, and depends only on $[d]$ and $\lambda d$). If we multiply it by the identity of $\gamma$, we get an isomorphism $\tilde{H} : (\gamma \times \gamma) \big| _{{\C}P^{\infty} \times ({\C}P^{\infty} \setminus \ast)} \rightarrow \tilde{\pi}^*\left((\gamma \times \gamma) \big| _{{\C}P^{\infty} \times {\C}P^{\infty-1}}\right)$.

Let $U_k = f^{-1}(\tilde{U}_k)$. Since $\S$ is the pullback of $\tilde{\nu}$ by $f$, and $\tilde{\varphi}_k$ is a local trivialization of $\tilde{\nu}$ over $\tilde{U}_k$, it can also be pulled back to a local trivialization $\varphi_k$ of $\S$ over $U_k$. Since $\tilde{\psi}_k$ is a local trivialization of $\gamma \times \gamma$ over $\tilde{U}_k$, it determines a local trivialization of $\xi$ over $\tilde{U}_k$, which in turn can be pulled back to a local trivialization $\psi_k$ of $\T$ over $U_k$. Finally $\tilde{H}$ determines an isomorphism between $\xi \big| _{{\C}P^{\infty} \times ({\C}P^{\infty} \setminus \ast)}$ and $\tilde{\pi}^*\left(\xi \big| _{{\C}P^{\infty} \times {\C}P^{\infty-1}}\right)$, which is pulled back to an isomorphism $H$ between $T \big| _{S}$ and $\pi^*(T \big| _{N})$.

We need to prove that these maps satisfy the conditions of the Lemma.

\bek
For the first condition it is enough to check that $\overline{\tilde{\varphi}_{k_1,k_2}} = \pr_2 \circ \tilde{\psi}_{k_1,k_2}$ (these are the transition maps between $\tilde{\varphi}_{k_1}$ and $\tilde{\varphi}_{k_2}$, and between $\tilde{\psi}_{k_1}$ and $\tilde{\psi}_{k_2}$), because transition maps between local trivializations of $\gamma \times \gamma$ determine those of $\xi$, and are pulled back to those of $\T$.

So assume that $\tilde{U}_{k_1} = \hat{U}'_{j_1} \times \hat{U}_{i_1}$ and $\tilde{U}_{k_2} = \hat{U}'_{j_2} \times \hat{U}_{i_2}$ have non-empty intersection. Let $([a_1],[0, b_1]) = ([a_2],[0, b_2])$ be an element of this intersection (where the representative $a_1$ of $[a_1]$ is the one chosen in $\hat{U}'_{j_1}$, etc.). Then the value of $\tilde{\varphi}_{k_1,k_2}$ in this point is $\lambda$ where $b_2 = \lambda b_1$. The value of $\tilde{\psi}_{k_1,k_2}$ is $(\mu, \lambda^{-1})$, where  $a_1 = \mu a_2$ (and  $b_1 = \lambda^{-1} b_2$). Since $\pr_2(\mu, \lambda^{-1}) = \lambda^{-1} = \overline{\lambda}$, this proves that the first condition is satisfied.

\begin{rem*}
Similarly we can show that $\overline{\hat{\varphi}_{i_1,i_2}} = \hat{\psi}_{i_1,i_2}$, which proves the well-known fact that the normal bundle $\nu$ of ${\C}P^{\infty-1}$ in ${\C}P^{\infty}$ is the complex conjugate of the tautological bundle $\gamma \big| _{{\C}P^{\infty-1}}$ \big[or in the real case $\nu \cong \gamma \big| _{{\R}P^{\infty-1}}$\big]. 
\end{rem*}

The local trivialization $\tilde{\psi}_k$ and the isomorphism $\tilde{H}$ determine a local trivialization $\tilde{\psi}'_k : \tilde{\pi}^{-1}(\tilde{U}_k) \times \C^2 \rightarrow (\gamma \times \gamma) \big| _{\tilde{\pi}^{-1}(\tilde{U}_k)}$ as follows. Let $([a], [b_0, b']) \in \tilde{\pi}^{-1}(\tilde{U}_k)$ (then $([a], [0, b']) \in \tilde{U}_k = \hat{U}'_j \times \hat{U}_i$, and we assume that $a$ and $(0,b')$ are the previously chosen representatives, so $\| b' \| = 1$) and $(x,y) \in \C^2$. We first apply $\tilde{\psi}_k$ to $(([a], [0,b']), (x, y))$ to get $\big(([a], [0,b']), (x a, (0, y b'))\big) \in (\gamma \times \gamma) \big| _{\tilde{U}_k}$. This determines an element $\big(\big([a],[b_0, b']\big),\big(([a], [0,b']), (x a, (0, y b'))\big)\big)$ in the fiber of $\tilde{\pi}^*\left((\gamma \times \gamma) \big| _{\tilde{U}_k}\right)$ over $([a],[b_0, b'])$. Finally we apply $\tilde{H}^{-1}$ to get $\big(([a],[b_0, b'])$, $(x a, (\frac{y}{\| b \|} b_0, \frac{y}{\| b \|} b'))\big)$. To sum up, $\tilde{\psi}'_k(([a], [b]), (x, y)) = (([a],[b]),(x a, \frac{y}{\| b \|} b))$.

The local trivilaization $\psi'_k$ is constructed in the same way from $\psi_k$ and $H$ as $\tilde{\psi}'_k$ from $\tilde{\psi}_k$ and $\tilde{H}$, therefore it is the pullback of $\tilde{\psi}'_k$.

Let $\omega : ({\C}P^{\infty} \setminus {\C}P^{\infty-1}) \times \C \rightarrow \gamma \big| _{{\C}P^{\infty} \setminus {\C}P^{\infty-1}}$ be the map defined by the formula $([1, c'], y) \mapsto ([1, c'], (\frac{y}{\| (1, c') \|}$, $\frac{y}{\| (1, c') \|} c') )$, it is a trivialization of $\gamma \big| _{{\C}P^{\infty} \setminus {\C}P^{\infty-1}}$. This is an isomorphism between the trivial bundle $h_0^*(\gamma \big| _{\ast})$ and $\gamma \big| _{{\C}P^{\infty} \setminus {\C}P^{\infty-1}}$, and it induces an isomorphism  between $(\gamma \times \gamma) \big| _{{\C}P^{\infty} \times ({\C}P^{\infty} \setminus {\C}P^{\infty-1})}$ and $(\id \times h_0)^*\big((\gamma \times \gamma) \big| _{{\C}P^{\infty} \times \ast}\big)$. It determines an isomorphism between $\xi \big| _{{\C}P^{\infty} \times ({\C}P^{\infty} \setminus {\C}P^{\infty-1})}$ and $(\id \times h_0)^*\big(\xi \big| _{{\C}P^{\infty} \times \ast}\big)$, we assume that this is used to define $W'$.

Let $\tilde{\sigma}_j : \hat{U}'_j \times ({\C}P^{\infty} \setminus {\C}P^{\infty-1}) \times \C^2 \rightarrow (\gamma \times \gamma) \big| _{\hat{U}'_j \times ({\C}P^{\infty} \setminus {\C}P^{\infty-1})}$ be the map given by $([a], [1, c'], x, y) \mapsto ([a], [1, c'], x a, (\frac{y}{\| (1, c') \|}, \frac{y}{\| (1, c') \|} c'))$. (This is $\hat{\sigma}_j \times \omega$.) It is a local trivialization of $(\gamma \times \gamma) \big| _{{\C}P^{\infty} \times ({\C}P^{\infty} \setminus {\C}P^{\infty-1})}$, so it determines a local trivialization of $\xi \big| _{{\C}P^{\infty} \times ({\C}P^{\infty} \setminus {\C}P^{\infty-1})}$. This can be pulled back to a local trivialization $\sigma_j$ of $\T$ over $U'_j = f^{-1}(\hat{U}'_j \times ({\C}P^{\infty} \setminus {\C}P^{\infty-1})) \subseteq M \setminus N$.

Choose any $q = \varphi_k(p, t \theta) \in \pi^{-1}(U_k) \cap U'_j$. Then $f(q) = \tilde{\varphi}_k(f(p), t \theta) = ([a], \allowbreak [t \theta, b'])$, where $f(p) = ([a], [0, b']) \in \tilde{U}_k$. The fiber of $\gamma \times \gamma$ over this point can be identified with $\C^2$ by either $\tilde{\psi}'_k$ or $\tilde{\sigma}_j$. The identification by $\tilde{\psi}'_k$ sends $(x,y) \in \C^2$ to the point $(x a, (\frac{y}{\|(t \theta, b')\|} t \theta, \frac{y}{\|(t \theta, b')\|} b'))$. The other identification is given by the map $(x,y) \mapsto (x a, (\frac{y}{\| (1, c') \|}, \frac{y}{\| (1, c') \|} c'))$, where $[1, c'] = [t \theta, b']$. This implies that $b' =  t \theta c'$, and that $\|(t \theta, b')\| = t \| (1, c') \|$. Therefore the value of the transition map between $\tilde{\psi}'_k$ and $\tilde{\sigma}_j$ over this point (the map $\C^2 \rightarrow \C^2$ obtained by restricting $\tilde{\sigma}_j^{-1} \circ \tilde{\psi}'_k$ to this fiber, expressed as an element of $U(1) \times U(1)$) is $(1, \theta)$.

Similarly $F_q$ can be identified with $D^4$ by either $\psi'_k$ or $\sigma_j$. Let $(x, y)$ be coordinates on $F_q$ coming from the first identification, and $(x', y')$ be coordinates coming from the second. The transition map between $\psi'_k$ and $\sigma_j$ is determined by the transition map between $\tilde{\psi}'_k$ and $\tilde{\sigma}_j$, therefore $x' = x$ and $y' = \theta y$.

By the construction of $W'$, $W' \cap F_q = \left\{ (x',y') \mid 2x'y' = 1-|x'|^2 - |y'|^2 \right\}$. This can be expressed in the coordinates coming from $\psi'_k$ as $\left\{ (x,y) \mid 2x \theta y = 1-|x|^2 - |y|^2 \right\}$. This proves that the second condition is also satisfied.
\end{proof}

Now that we have a description of $W'$ around $N$, we can finish the construction of $W$ and prove that we get a manifold with boundary $B$.

For any fiber $F$ of $\T$ over a point of $N$, let 
\[
\begin{aligned}
\tilde{V} &= \left\{ (x,y) \in F \approx D^4 \mid 2|xy| \leq 1-|x|^2 - |y|^2 \right\} \\
&= \left\{ (x,y) \in F \approx D^4 \mid |x| + |y| \leq 1 \right\}.
\end{aligned}
\]
This is well-defined (invariant under the $(U(1) \times U(1))$-action). Let $W''$ be the union of the $\tilde{V}$s in all fibers over $N$, and let $W = W' \cup W''$.

$\tilde{V}$ is homeomorphic to $D^4$, the map $\tilde{v} : D^4 \rightarrow \tilde{V}$, $\tilde{v}(t(x,y)) = t \frac{1}{\sqrt{1+2|xy|}} (x,y)$ (where $t \in [0,1]$, and $(x,y) \in S^3$ \big[$(x,y) \in S^1$\big]) is a homeomorphism. 

\begin{prop}
$W$ is a (topological) submanifold of $T$.
\end{prop}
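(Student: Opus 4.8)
The plan is to reduce everything to a local model via Lemma~\ref{lem:loctriv} and then exhibit an explicit Euclidean chart around each point of $W$. Over $M\setminus N$ the set $W'$ is the total space of a smooth fibre bundle with fibre $V$, hence (by Proposition~\ref{prop:v} and the propositions following it) a smooth manifold with boundary, and the part of $T$ lying over $M\setminus N$ is open in $T$; so it remains to examine $W$ near points over $N$. Fix such a point and take a trivialising neighbourhood $U_k$ with the trivialisations $\varphi_k$ of $\S$ and $\psi'_k$ of $T|_S$ provided by Lemma~\ref{lem:loctriv}. Together they identify $T|_{S|_{U_k}}$ with $U_k\times D^2\times D^4$, the $D^2$-factor being the fibre of $\S$, written as $z=t\theta$ with $t\in[0,1]$ and $\theta\in S^1$; and under this identification $W\cap T|_{S|_{U_k}}$ becomes $U_k\times\mathcal{W}$, where
\[
\mathcal{W}=\{(t\theta,(x,y))\in D^2\times D^4 : t\in(0,1],\ 2x\theta y=1-|x|^2-|y|^2\}\cup(\{0\}\times\tilde{V}).
\]
It therefore suffices to prove that $\mathcal{W}$ is a topological manifold with boundary $D^2\times(B\cap F)$.

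Write $V_\theta=\mu_\theta(V)$ with $\mu_\theta(x,y)=(x,\theta^{-1}y)$; each $V_\theta$ lies in $\{|x|+|y|=1\}=\tilde{v}(S^3)$, the $V_\theta$ cover this set, and $V_\theta\cap V_{\theta'}=B\cap F$ for $\theta\neq\theta'$. Hence $\mathcal{W}=(\{0\}\times\tilde{V})\cup\mathcal{C}$ with $\mathcal{C}=\mathcal{W}\cap\{z\neq0\}$, and $(t,\theta,v)\mapsto(t\theta,\mu_\theta(v))$ is a homeomorphism $(0,1]\times S^1\times V\to\mathcal{C}$; since $\mathcal{C}$ is open in $\mathcal{W}$, every point of it lies in a manifold with boundary. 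If $|x|+|y|<1$, no point of $\mathcal{W}$ near $(0,(x,y))$ has $z\neq0$, so $\{0\}\times\interior\tilde{V}$ is open in $\mathcal{W}$ and (via $\tilde{v}$) consists of interior manifold points. For $(0,(x_0,y_0))$ with $|x_0|+|y_0|=1$ and $x_0y_0\neq0$ I would use polar coordinates $x=r_1e^{i\alpha}$, $y=r_2e^{i\beta}$: near this point $\mathcal{W}$ is the half-space $\{0\}\times\{r_1+r_2\le1\}$ (coordinates $1-r_1-r_2\ge0$ and $r_1,\alpha,\beta$) glued along $\{r_1+r_2=1\}$, by the identity on $(r_1,\alpha,\beta)$, to the half-space of points $(te^{-i(\alpha+\beta)},(r_1e^{i\alpha},(1-r_1)e^{i\beta}))$ with $t\ge0$ (coordinates $t\ge0$ and $r_1,\alpha,\beta$); these assemble into $\R^4$, so the point is interior.

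The main obstacle is the remaining case: points $(0,(x_0,y_0))$ with $(x_0,y_0)\in B\cap F$, say $y_0=0$ and $|x_0|=1$. Here every sheet $V_\theta$ runs through the point, so the limit map $\mathcal{C}\to\{0\}\times\tilde{v}(S^3)$ as $t\to0$ collapses the $\theta$-parameter along $B\cap F$; this is the corner situation of Figure~2, and a naive piecewise gluing does not visibly produce a manifold. I would instead work near the whole circle $D^2\times(S^1\times\{0\})$ using coordinates $(z,\ s=1-|x|,\ \alpha=\arg x,\ y)$ and then pass to $Y=ye^{i\alpha}$, $Z=\bar{z}$ (an ambient change of coordinates). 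Using that a point near $(x_0,0)$ lies in $V_\theta$ exactly when $|x|+|y|=1$ with $y$ equal to its forced value --- which in the new coordinates reads precisely $Y=sZ/|Z|$ for $Z\neq0$ --- one obtains that $\mathcal{W}$ near this circle is $\mathcal{Z}\times S^1_\alpha$, where
\[
\mathcal{Z}=\{(Z,s,Y)\in\C\times[0,\infty)\times\C : |Y|\le s,\ \text{and } Y=sZ/|Z| \text{ whenever } Z\neq0\}.
\]
The crux is then that $G\colon\mathcal{Z}\to\C\times[0,\infty)$, $G(Z,s,Y)=(Z+Y,\,s)$, is a homeomorphism: injectivity is immediate because $Z\neq0$ forces $\arg(Z+Y)=\arg Z$ and $|Z+Y|=|Z|+s>s\ge|Y|$, while $G^{-1}(W,s)$ equals $(0,s,W)$ for $|W|\le s$ and $((|W|-s)W/|W|,\,s,\,sW/|W|)$ for $|W|>s$, the two formulas matching continuously on $|W|=s$. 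Hence $\mathcal{Z}\cong\C\times[0,\infty)$, so near such a point $\mathcal{W}$ is homeomorphic to $\R^2\times[0,\infty)\times\R$, a boundary manifold point whose boundary is the $B$-part; the case $x_0=0$, $|y_0|=1$ is the same with the roles of $x$ and $y$ interchanged.

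Since all of the above coordinate changes are restrictions of ambient changes of coordinates on $T$ (near the $B$-part, completed to ambient charts by straightening $\mathcal{Z}$ inside its ambient slice), the inclusion $W\hookrightarrow T$ is locally flat, and smoothing the corners of $((Y_1\cup Y_2)\setminus T)\cup W$ along $B$ then produces the submanifold $Y$ of Lemma~\ref{lem:w}. The argument for Case~1 is the corresponding analogue (with $U(1),\C,D^4,D^2,S^1$ replaced by $O(1),\R,D^2,D^1,S^0$), where $\mathcal{Z}$ and $G$ are exactly the picture behind Figure~2. I expect the only real difficulty throughout to be isolating the model $\mathcal{Z}$ and finding the linearising map $G$; once these are in hand, every other step is a routine check.
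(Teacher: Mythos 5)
Your proposal is correct and follows essentially the same route as the paper: reduce to a local fibre model over $U_k$ via Lemma~\ref{lem:loctriv}, check interior and transverse-boundary points directly, and handle the corner at $B\cap F$ by a chart that "unrolls'' the $\theta$-cone. Your linearising map $G(Z,s,Y)=(Z+Y,s)$ on the model $\mathcal{Z}$ is, up to a fibrewise rotation by $e^{-i\alpha}$, precisely the paper's pair of maps $j'$, $j''$ (which land in $U\times\overline{\R^3_+\setminus C_3}$ and $U\times C_3$ respectively and glue along the cone $\partial C_3 = \{|w|=s\}$), so the two arguments are the same in substance, with your phrasing making the key geometric step somewhat more transparent.
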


\begin{proof}
We need to find euclidean neigbourhoods of the points of $W''$ in $W$.  

Choose any $p \in N$. Let $G$ be the fiber of $\S$ over $p$, and fix an identification between $G$ and $D^2$, coming from some $\varphi_k$. Let $F_g$ be the fiber of $\T$ over $g \in G$, and $\tilde{V} = F_0 \cap W''$ be the fiber of $W''$ over $p$. $\psi'_k$ is used to identify $F_g$ with $D^4$.

A point of $\tilde{V}$ has a neighbourhood in $W$ that is the direct product of a neighbourhood of this point in $T \big| _G \cap W$, and a neighbourhood of $p$ in $N$. The latter is euclidean, so all we need to prove is that points of $\tilde{V}$ have euclidean neighbourhoods in $T \big| _G \cap W$.

\bek
By Lemma \ref{lem:loctriv} $F_0 \cap \overline{W'} = \left\{ (x,y) \in F_0 \approx D^4 \mid |xy| = 1-|x|^2 - |y|^2 \right\} = \partial \tilde{V}$ (here $\overline{W'}$ means the closure of $W'$). So $\interior \tilde{V}$ is open in $T \big| _G \cap W$, and it is also an open subset (hence a submanifold) in $F_0$, so its points do have euclidean neighbourhoods in $T \big| _G \cap W$.

\bek
Next consider a point $(x,y) \in \partial \tilde{V}$ with $x \neq 0$, $y \neq 0$. It has a euclidean half-space as a neighbourhood in $\tilde{V}$. In $T \big| _G \cap \overline{W'}$ it also has a half-space neighbourhood, which is the image of the following map. Its domain is the product of $[0,\varepsilon)$ (for a small $\varepsilon > 0$) and a small neighbourhood of $(x,y)$ in $\partial \tilde{V}$, and it sends $(t,(x',y'))$ (where $t \in [0,\varepsilon)$, $(x',y') \in \partial \tilde{V}$) to $(x',y')$ in the fiber $F_{t \frac{|x'y'|}{x'y'}}$. (Using Lemma \ref{lem:loctriv} we can check that this is a homeomorphism of a half-space onto a neighbourhood of $(x,y)$ in $T \big| _G \cap \overline{W'}$.) These two half-space neighbourhoods together form a euclidean neighbourhood of $(x,y)$ in $T \big| _G \cap W$.

\bek
Next take a point $(x,0) \in S^1 \times \{ 0 \} \subset \partial \tilde{V}$. This will be a boundary point of $W$, so we will construct a euclidean half-space neighbourhood of it in $T \big| _G \cap W$. 

Let $\R^i_+ = \left\{ (x_1,x_2,\dots,x_i) \in \R^i \mid x_1 \geq 0 \right\} $ be the $i$-dimensional half-space, and $C_i = \left\{ (x_1,x_2,\dots,x_i) \in \R^i \mid x_1 \geq \|(x_2,\dots,x_i)\| \right\} \subset \R^i_+$ be an $i$-dimensional cone.

Let $U$ be a neighbourhood of $x$ in $S^1$. \big[In Case 1 $U = \{ x \} \subset S^0$.\big] We will define a homeomorphism between a neighbourhood of $(x,0)$ in $T \big| _G \cap W$ and a neighbourhood of $(x,0,0,0)$ in $U \times \R^3_+$ \big[of $(x,0,0)$ in $U \times \R^2_+$\big].

First, for any $u \in U$, $u' \in S^1$, $s, t \in [0, \varepsilon)$, let $j'$ send the point $(u(1-s),u's)$ in the fiber $F_{t(uu')^{-1}}$ to $(u,s,u'(s+t)) \in U \times \R \times \R^2$ \big[$\in U \times \R \times \R$\big]. The domain of $j'$ is a neighbourhood of $(x,0)$ in $T \big| _G \cap \overline{W'}$ (the point $(u(1-s),u's) \in F_{t(uu')^{-1}}$ is in $W' \cap F_{t(uu')^{-1}}$ if $t > 0$, and it is in $\partial \tilde{V}$ if $t = 0$, and each point in a neighbourhood of $(x,0)$ is of this form). $j'$ is well-defined and injective because $(u_1(1-s_1),u'_1s_1) \in F_{t_1(u^{}_1u'_1)^{-1}}$ and $(u_2(1-s_2),u'_2s_2) \in F_{t_2(u^{}_2u'_2)^{-1}}$ are equal iff either $(u_1,u'_1,s_1,t_1) = (u_2,u'_2,s_2,t_2)$ or $u_1 = u_2$, $s_1 = s_2 = 0$, $u'_1 \neq u'_2$, and $t_1 = t_2 = 0$, and this is equivalent to $(u_1,s_1,u'_1(s_1+t_1)) = (u_2,s_2,u'_2(s_2+t_2))$. The image of $j'$ is in $U \times \overline{\R^3_+ \setminus C_3}$ \big[in $U \times \overline{\R^2_+ \setminus C_2}$\,\big], because $0 \leq s \leq |u'(s+t)|$, and it maps surjectively onto a neighbourhood of $(x,0,0,0)$. So $j'$ is a homeomorphism between a neighbourhood of $(x,0)$ in $T \big| _G \cap \overline{W'}$ and a neighbourhood of $(x,0,0,0)$ in $U \times \overline{\R^3_+ \setminus C_3}$.

Second, for any $u \in U$, $u' \in S^1$, $s, s' \in [0, \varepsilon)$, with $(1-s)+s' \leq 1$, let $j''$ send $(u(1-s),u's') \in \tilde{V}$ to $(u, s, u's') \in U \times \R \times \R^2$ \big[$\in U \times \R \times \R$\big]. This is well-defined and injective, and it maps into $U \times C_3$ \big[into $U \times C_2$\big], because $s \geq |u's'|$. So this is a homeomorphism between a neighbourhood of $(x,0)$ in $\tilde{V}$ and a neighbourhood of $(x,0,0,0)$ in $U \times C_3$.

The maps $j'$ and $j''$ are both defined on $\partial \tilde{V}$, and for any $u \in U$, $u' \in S^1$, $s \in [0, \varepsilon)$ they take the same value on $(u(1-s),u's) \in \tilde{V} \subset F_0$, namely $(u,s,u's)$. So together they form a homeomorphism $j$ between the neighbourhoods of $(x,0)$ in $T \big| _G \cap W$ and $U \times \R^3_+$.

\bek
Finally to prove that points in $\{ 0 \} \times S^1$ have euclidean neighbourhoods consider the map $(x,y) \mapsto (y,x)$ in all fibers of $T \big| _G$. It maps $T \big| _G \cap W$ onto itself and $S^1 \times \{ 0 \}$ onto $\{ 0 \} \times S^1$, so points of the latter also have euclidean neighbourhoods.
\end{proof}

We can also see from the above that $\partial W = B$. This finishes the proof of Theorem \ref{thm:top}.

\section{The smooth version} \label{sec:smooth}

\subsection{The construction of $W_2$}

Fix some $0 < \varepsilon_1 < \varepsilon_2 < 1$. Choose a smooth map $\ell : [0, 1] \rightarrow [0, 1]$ such that $\ell(t) = t$ if $t \in [0, \varepsilon_1]$, $\ell(t) = 1$ if $t \in [\varepsilon_2, 1]$, and $\ell$ is strictly increasing in $[\varepsilon_1, \varepsilon_2]$.

We define a subset $W''_2 \subset T \big| _S$ in the following way:

Let $q \in S$, $p = \pi(q) \in N$, and $F_q$ be the fiber of $\T$ over $q$. Choose a $k$ such that $p \in U_k$, and let $g = t \theta \in D^2$ (where $t \in [0, 1]$, $\theta \in S^1$) be the point that satisfies $q = \varphi_k(p, g)$. We use $\psi'_k$ to identify $F_q$ and $D^4$. Let
\[
\begin{aligned}
F_q \cap W''_2 &= \left\{ (x,y) \in F_q \approx D^4 \mid 2x\theta y = \ell(t)(1-|x|^2 - |y|^2) \right\} &\text{if $t>0$} \\
F_q \cap W''_2 &= \left\{ (x,y) \in F_q \approx D^4 \mid xy = 0 \right\} &\text{if $t=0$}
\end{aligned}
\]

\begin{prop} \label{prop:w''-well-def}
The set $W''_2$ is well-defined, ie.\ the subset $F_q \cap W''_2 \subset F_q$ defined above does not depend on the choice of $k$.
\end{prop}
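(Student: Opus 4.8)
The plan is to unwind the definition of $W''_2$ at a point lying over $U_{k_1}\cap U_{k_2}$ and reduce the claim to the two conditions of Lemma~\ref{lem:loctriv}. First I would fix $q\in S$, set $p=\pi(q)\in N$, and assume $p\in U_{k_1}\cap U_{k_2}$. Writing $q=\varphi_{k_1}(p,g_1)=\varphi_{k_2}(p,g_2)$ with $g_i=t_i\theta_i$ ($t_i\in[0,1]$, $\theta_i\in S^1$), the relation $g_2=\varphi_{k_1,k_2}(p)\,g_1$ and the fact that $\varphi_{k_1,k_2}(p)\in U(1)$ has unit modulus give $t_1=t_2=:t$ and, when $t>0$, $\theta_2=\varphi_{k_1,k_2}(p)\,\theta_1$. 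In particular $\ell(t)$ is the same for both choices of $k$.

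Next I would compare the two identifications $F_q\approx D^4$. Since $\psi'_k(q,(x,y))=H^{-1}(q,\psi_k(p,(x,y)))$ and the \emph{same} isomorphism $H$ is used for $k_1$ and $k_2$, the coordinate change on $F_q$ between $\psi'_{k_1}$ and $\psi'_{k_2}$ is literally the transition map between $\psi_{k_1}$ and $\psi_{k_2}$ at $p$, namely $\psi_{k_1,k_2}(p)=:(\mu,\nu)\in U(1)\times U(1)$ acting on $D^4$ via the action $(\alpha,\beta)(x,y)=(\alpha x,\alpha^{-1}\beta y)$ that defines $\xi$. Thus if $(x,y)$ and $(x',y')$ are the $\psi'_{k_1}$- and $\psi'_{k_2}$-coordinates of one point of $F_q$, then $(x',y')=(\mu x,\ \mu^{-1}\nu y)$.

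Now I would invoke the first condition of Lemma~\ref{lem:loctriv}, $\overline{\varphi_{k_1,k_2}}=\pr_2\circ\psi_{k_1,k_2}$, which says $\varphi_{k_1,k_2}(p)=\overline{\nu}$, hence $\theta_2=\overline{\nu}\,\theta_1$ when $t>0$. Substituting into the equation defining $W''_2$ and using $|\mu|=|\nu|=1$ one gets $|x'|=|x|$, $|y'|=|y|$, and
\[
x'\,\theta_2\,y'=(\mu x)(\overline{\nu}\,\theta_1)(\mu^{-1}\nu y)=|\nu|^2\,\theta_1\,xy=\theta_1\,xy .
\]
Therefore $\{(x',y'):2x'\theta_2 y'=\ell(t)(1-|x'|^2-|y'|^2)\}$ and $\{(x,y):2x\theta_1 y=\ell(t)(1-|x|^2-|y|^2)\}$ are the same subset of $F_q$, which disposes of the case $t>0$. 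For $t=0$ we have $x'y'=\nu xy$, so $\{x'y'=0\}=\{xy=0\}$ in $F_q$, and the remaining case is done. (The argument in Case~1 is the same after replacing $U(1)$, $\C$, $D^4$ by $O(1)$, $\R$, $D^2$.)

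The hard part, such as it is, is purely the bookkeeping in the middle step: recognising that because $\psi'_{k_1}$ and $\psi'_{k_2}$ are built from their $\psi_{k_i}$ through the one isomorphism $H$, their transition on $F_q$ is exactly $\psi_{k_1,k_2}(p)$, and keeping track of which $U(1)\times U(1)$-action on $D^4$ is in force (the one defining $\xi$, not the diagonal one). Once that is pinned down the verification is the one-line computation above, in which the cancellation $\overline{\nu}\cdot\nu=1$ — supplied precisely by condition~1 of Lemma~\ref{lem:loctriv} — is what makes the cross term $x\theta y$, and hence the defining equation, invariant, while invariance of the moduli $|x|,|y|$ is automatic.
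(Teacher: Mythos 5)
Your proof is correct, and it takes a genuinely different route from the paper's. You verify directly that the defining formula is invariant under a change of trivialization: you observe that the transition between $\psi'_{k_1}$ and $\psi'_{k_2}$ on $F_q$ is exactly $\psi_{k_1,k_2}(p) = (\mu,\nu)$ (since both are built from $\psi_{k_i}$ through the same $H$), then use the first condition of Lemma~\ref{lem:loctriv} to get $\theta_2 = \overline{\nu}\,\theta_1$, and check that $x\theta y$ and $|x|^2+|y|^2$ are both invariant, so the equation $2x\theta y = \ell(t)(1-|x|^2-|y|^2)$ (resp.\ $xy=0$) cuts out the same subset. The paper instead introduces the radial rescalings $d_L(x,y) = c_L(x,y)\,(x,y)$, argues $d_L$ is well-defined because $c_L$ depends only on $|x|^2+|y|^2$, and shows $d_{\ell(t)}$ carries $F_q\cap W'$ (an a priori well-defined set) onto $F_q\cap W''_2$; well-definedness of $W''_2$ is then inherited. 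That argument leans on the \emph{second} condition of Lemma~\ref{lem:loctriv} (the coordinate description of $W'$), whereas yours uses only the \emph{first} (the transition-map compatibility $\overline{\varphi_{k_1,k_2}}=\pr_2\circ\psi_{k_1,k_2}$). Your route is shorter and more self-contained for this proposition alone; the paper's has the advantage that the family $d_L$ is reused afterwards (in proving $W_2$ is smooth via the global map $d$, and again in building the homeomorphism $W''_0\to W''_2$), so the extra setup amortizes.
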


\begin{proof} 
If $q \in N$, ie.\ $t=0$, then the set $\left\{ (x,y) \in F_q \approx D^4 \mid xy = 0 \right\}$ is independent of which $\psi'_k$ is used to identify $F_q$ and $D^4$, because it is $(U(1) \times U(1))$-invariant and $\T$ is a $(U(1) \times U(1))$-bundle.

If $q \not\in N$, ie.\ $t > 0$\,:

For some $L > 0$ consider the map $d_L : F_q \rightarrow F_q$,
\[
d_L(x,y) = \sqrt{\frac{L}{1-(1-L)(|x|^2+|y|^2)}}(x,y)\,.
\]
This makes sense (the denominator is always positive, since $1-L < 1$ and $0 \leq |x|^2+|y|^2 \leq 1$, and the image is in $F_q$ as we will see soon), and it is independent of which $\psi'_k$ is used to define coordinates in $F_q$. Indeed, it maps each point $(x,y)$ into itself multiplied by a scalar which depends only on $|x|^2+|y|^2$, and that is independent of $k$.

The map $d_{\frac{1}{L}} : F_q \rightarrow F_q$ is the inverse of $d_L$\,: 
\[
d_{\frac{1}{L}}(x,y) =  \sqrt{\frac{\frac{1}{L}}{1-(1-\frac{1}{L})(|x|^2+|y|^2)}}(x,y) = \sqrt{\frac{1}{L+(1-L)(|x|^2+|y|^2)}}(x,y)
\]
Indeed, both of $d_L$ and $d_{\frac{1}{L}}$ send a vector into a positive scalar multiple of itself, and the square of the norm of the vector is changed by the functions $h \mapsto \frac{Lh}{(L-1)h+1}$ and $h \mapsto \frac{h}{(1-L)h+L}$ respectively, and these linear fractional maps are inverses of each other. (They also map the interval $[0,1]$ into itself, so the image of $d_L$ is $F_q$\,.) Since both of $d_L$ and $d_{\frac{1}{L}}$ are smooth, they are diffeomorphisms.

\bek
We claim that $d_{\ell(t)}$ maps $F_q \cap W'$ to $F_q \cap W''_2$, independently of which $k$ is used to define the latter. This will prove that $F_q \cap W''_2$ is independent of $k$.

Choose any $k$ and use $\psi'_k$ to identify $F_q$ and $D^4$. By Lemma \ref{lem:loctriv}  
\[
F_q \cap W' = \left\{ (x,y) \in F_q \approx D^4 \mid 2x\theta y = 1-|x|^2 - |y|^2 \right\}.
\]

In order to prove our claim we first check that if $2x\theta y = r(1-|x|^2 - |y|^2)$ for some $\theta \in S^1$, $r > 0$, and $d_L(x,y) =(x',y')$, then $2x'\theta y' = Lr(1 - |x'|^2 - |y'|^2)$ (we will use the notation $c_L(x,y) = \sqrt{\frac{L}{1-(1-L)(|x|^2+|y|^2)}}$\,):
\begin{multline*}
2x'\theta y' = 2c_L(x,y)x\theta c_L(x,y)y = c_L(x,y)^2 2x\theta y = c_L(x,y)^2r(1 - |x|^2 - |y|^2) = \\
= \frac{Lr(1 - |x|^2 - |y|^2)}{1-(1-L)(|x|^2+|y|^2)} = Lr\left(\frac{1-(1-L)(|x|^2+|y|^2) - L(|x|^2+|y|^2)}{1-(1-L)(|x|^2+|y|^2)}\right) = \\
= Lr\left(1 - \frac{L(|x|^2+|y|^2)}{1-(1-L)(|x|^2+|y|^2)}\right) = Lr(1 - c_L(x,y)^2(|x|^2+|y|^2)) = \\
= Lr(1 - |x'|^2 - |y'|^2)
\end{multline*}

Hence if $(x,y) \in F_q \cap W'$, then $d_{\ell(t)}(x,y)\in F_q \cap W''_2$, so $d_{\ell(t)}(F_q \cap W') \subseteq F_q \cap W''_2$. Similarly $d_{\frac{1}{\ell(t)}}(F_q \cap W''_2) \subseteq F_q \cap W'$, so $F_q \cap W''_2 \subseteq d_{\ell(t)}(F_q \cap W')$, which implies that $d_{\ell(t)}(F_q \cap W') = F_q \cap W''_2$.

This works for each $k$, so our claim, and therefore the Proposition is proved.
\end{proof}

Let $W_2 = (W' \setminus T\big|_S) \cup W''_2$.

\begin{prop} \label{prop:W1smooth}
$W_2$ is a smooth submanifold of $T$.
\end{prop}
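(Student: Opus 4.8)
The plan is to show that $W_2$ is smooth by verifying it near each of three types of points: interior points coming from $W'$, the points of $N$ (where $F_q\cap W_2''=\{xy=0\}$), and the boundary points $B$. Away from $T\big|_S$ the set $W_2$ agrees with $W'$, which is a smooth subbundle of $\T\big|_{M\setminus N}$ by Proposition \ref{prop:v} and the remarks following it, so there is nothing to check there. Over the region where $\varepsilon_2\le t\le 1$ we have $\ell(t)=1$, so $W_2''$ also coincides with $W'$ there; thus the only genuine work is in the collar $0\le t\le\varepsilon_2$ of $N$, and it suffices to work in a single trivializing chart $U_k$, using $\varphi_k$, $\psi_k$ and $\psi_k'$ as in Lemma \ref{lem:loctriv}.

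First I would handle the region $0<t\le\varepsilon_2$. Here the key observation, already extracted in the proof of Proposition \ref{prop:w''-well-def}, is that the fiberwise diffeomorphism $d_{\ell(t)}$ carries $F_q\cap W'$ onto $F_q\cap W_2''$. Since $\ell$ is smooth and $d_L(x,y)$ depends smoothly on $L$ and on $(x,y)$, the assignment $q\mapsto d_{\ell(t(q))}$ defines a smooth fiber-preserving diffeomorphism $\Phi$ of $T\big|_{S|_{U_k}}$ onto itself (for $t>0$; and $\Phi=\id$ for $t\ge\varepsilon_2$, matching $W'$ smoothly across the seam $t=\varepsilon_2$ since $\ell\equiv1$ and $\ell'\equiv0$ there, so all derivatives of $\Phi$ agree with those of the identity). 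Because $W'\big|_{S|_{U_k}\setminus N}$ is a smooth submanifold with boundary, its image $\Phi(W')=W_2''\setminus N$ is too; and since $\Phi$ fixes $\partial T$ pointwise (the scalar $c_L$ equals $1$ when $|x|^2+|y|^2=1$), it carries $\partial(F_q\cap W')=B\cap F_q$ to itself, so $W_2$ has the correct boundary and the correct smooth collar near $B$ away from $N$.

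It remains to verify smoothness at the points over $N$, i.e.\ at points of $\{(x,y)\in D^4\mid xy=0\}$ sitting in the fiber $F_0$ of $\T$ over $p\in N$. The hard part is precisely here, because as $t\to 0^+$ the quadric $\{2x\theta y=\ell(t)(1-|x|^2-|y|^2)\}$ degenerates; one must show the total space is nonetheless smooth across $t=0$. I would argue as follows: choosing coordinates $(x,y,t,\theta,p)$ with $\psi'_k$, the relevant chart of $T\big|_{S|_{U_k}}$ near a point of $N$ is modeled on $D^4\times D^2\times U_k$ (with $g=t\theta$), and $W_2\cap(\text{this chart})$ is, for $t>0$, cut out by the single complex equation $2x\theta y=\ell(t)(1-|x|^2-|y|^2)$, while for $t=0$ it is $\{xy=0\}\times\{0\}\times U_k$. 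Since $\ell(t)=t$ for $t\le\varepsilon_1$, in that range the equation reads $2xy\,\theta = t(1-|x|^2-|y|^2)$; introducing $g=t\theta\in\C$ this becomes the single smooth complex equation $2xy\,\bar g/|g| \cdot |g| \ldots$ — more cleanly, multiplying by $t$: the zero set equals that of $2xy\,g = t^2(1-|x|^2-|y|^2)$ together with $t\ge0$, but the cleanest route is to note that $\{(x,y,g)\in D^4\times D^2: 2xy\,\theta = t(1-\cdots)\text{ with }g=t\theta,\ t\ge0\}$ is exactly the image of the smooth map sending $(x,y)\in W'$-model and $t\in[0,1]$ via $d_{\ell(t)}$, extended by $d_0(x,y):=\lim_{L\to0}d_L(x,y)=0\cdot$-type collapse onto $\{xy=0\}$. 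Rather than pushing this limit through, the intended argument (to be completed in Section \ref{ssec:homeo}) is to exhibit an explicit chart: near a point $(x_0,0)$ with $x_0\neq0$ of $F_0\cap W_2''$ one parametrizes $W_2$ by $(x',t\theta,p)$ with $y$ solved smoothly from the defining equation (the coefficient of $y$ is $2x'\theta$, nonzero near $x_0$, so the implicit function theorem applies to the smooth function $(x',y,t,\theta)\mapsto 2x'\theta y-\ell(t)(1-|x'|^2-|y|^2)$, valid for all $t\ge0$ including $t=0$ where it gives $y=0$); symmetrically near $(0,y_0)$; and near $(0,0)$ one uses that $\{xy=0\}$ is not smooth — but $(0,0)\notin W_2$ would be false, so one checks instead that no point of $W_2$ over $N$ has both coordinates zero except possibly on a lower stratum, which is handled by the collar construction. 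I expect the main obstacle to be exactly this transition across $t=0$: making rigorous that solving $2x'\theta y=\ell(t)(1-|x'|^2-|y|^2)$ for $y$ gives a chart that is smooth (not merely continuous) up to and including $t=0$, and that the charts around the $x$-axis and the $y$-axis of $F_0$ cover all of $W_2''$, so that $\{xy=0\}$ appears inside $W_2$ only as the smooth disjoint union of (a neighborhood of) the two coordinate disks rather than their singular union.
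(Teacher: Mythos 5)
You correctly isolate the three regions and your treatment of the first two matches the paper: away from $S_{\varepsilon_2}$ the set $W_2$ literally equals $W'$, and over $0<t\le 1$ the fibrewise map $d_{\ell(t)}$ is a smooth fibre-preserving diffeomorphism carrying $W'$ to $W_2$. The gap is precisely where you say you expect trouble: the smoothness of $W_2$ across $T\big|_N$ is left unproven. Your proposed route --- applying the implicit function theorem to $(x,y,t,\theta)\mapsto 2x\theta y-\ell(t)(1-|x|^2-|y|^2)$ --- cannot succeed, because $(t,\theta)$ are polar coordinates on the $D^2$-fibre of $\S$ and are not a smooth chart at $t=0$; a function smooth in $(t,\theta)$ need not be smooth in $g=t\theta$. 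You actually begin to see the correct move (``introducing $g=t\theta$''), but then abandon it in favour of multiplying by $t$, which destroys the constraint at $g=0$, and you end with a confused discussion of the origin $(x,y)=(0,0)$ in the fibre over $N$ (this point is in $W_2$ and needs no special stratification --- see below).

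The point the proof hinges on is that $\ell$ was built with $\ell(t)=t$ for $t\le\varepsilon_1$ exactly so that, in the trivialisation $\psi'_k$, the defining condition becomes the single complex equation $2xy=\bar g\,(1-|x|^2-|y|^2)$ on $D^2_{\varepsilon_1}\times D^4$, which is smooth in the honest coordinates $(g_1,g_2,x_1,x_2,y_1,y_2)$. One then checks by an explicit Jacobian computation that $(0,0)$ is a regular value of the corresponding real map $w:\R^6\to\R^2$ at every point of the zero set: if $|x|^2+|y|^2<1$ the two $\partial/\partial g$ columns are $\pm(1-|x|^2-|y|^2)$ times unit vectors and already have rank $2$ (this covers $(g,x,y)=(0,0,0)$, so the apparent singularity of the fibre $\{xy=0\}$ at the origin is resolved by the transverse $g$-directions --- there is no ``lower stratum'' to excise); if $|x|^2+|y|^2=1$ then $xy=0$, and either the $\partial/\partial x$ or the $\partial/\partial y$ columns supply rank $2$. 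You need to carry out this computation (or an equivalent one) to close the argument; without it, smoothness over $N$ is asserted rather than proved.
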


\begin{proof}
Let $S_{\varepsilon_2}$ denote (the total space of) the subbundle of $\S$ with fiber $D^2_{\varepsilon_2}$, the disk of radius $\varepsilon_2$. By definition of $W_2$ (using Lemma \ref{lem:loctriv}, and that $\ell(t) = 1$ if $t \in [\varepsilon_2, 1]$), $W_2 \cap T \big| _{M \setminus S_{\varepsilon_2}} = W' \cap T \big| _{M \setminus S_{\varepsilon_2}}$, therefore it is a smooth submanifold of $T \big| _{M \setminus S_{\varepsilon_2}}$.

Consider the map $d : T \big| _{S \setminus N} \rightarrow T \big| _{S \setminus N}$ which is defined by $d \big| _{F_q} = d_{\ell(t)}$, where $q = \varphi_k(p, g)$, $g = t \theta$. From the proof of Proposition \ref{prop:w''-well-def} we see that this is well-defined, and it is a diffeomorphism (recall also that $\ell$ is smooth). We also see that it maps $W' \cap T \big| _{S \setminus N}$ to $W_2 \cap T \big| _{S \setminus N}$, which proves that the latter is a smooth submanifold of $T \big| _{S \setminus N}$.

Now we only need to prove that $W_2$ is a smooth submanifold near $T \big|_N$. Let $p \in N$, and $G$ be the fiber of $S_{\varepsilon_1}$ over $p$. Near $F_p$ the pair $(T, W_2)$ is locally the product of a neighbourhood of $p$ in $N$ and the pair $(T \big| _G, W_2 \cap T \big| _G)$, so we need to prove that $W_2 \cap T \big| _G$ is a smooth submanifold in $T \big| _G$.

Let $k$ be such that $p \in U_k$, so $G = \varphi_k(\{ p \} \times D^2_{\varepsilon_1})$. For $g \in D^2_{\varepsilon_1}$ let $F_g = F_{\varphi_k(p, g)}$ be the appropriate fiber of $\T$. Let $\psi : D^2_{\varepsilon_1} \times D^4 \rightarrow T \big|_G$, $(g,x,y) \mapsto \psi'_k(\varphi_k(p,g),(x,y))$ be the trivialization of $T \big|_G$ coming from $\psi'_k$. If we use this trivialization, then 
\[
W_2 \cap T \big| _G = W''_2 \cap T \big| _G = \left\{ (g,x,y) \in T \big| _G \approx D^2_{\varepsilon_1} \times D^4 \mid 2xy = \bar{g}(1-|x|^2 - |y|^2) \right\} \text{\,.}
\]
(By the definition of $W''_2$, $W''_2 \cap F_0 = \left\{ (x,y) \in F_0 \mid xy = 0 \right\}$, and if $g = t \theta$, $0 < t < \varepsilon_1$ then $W''_2 \cap F_g = \left\{ (x,y) \in F_g \mid 2xy = t \theta^{-1} (1-|x|^2 - |y|^2) \right\}$, and in this case $t \theta^{-1} = \bar{g}$.)

Consider the map $w : \R^6 \rightarrow \R^2$, 
\begin{multline*}
w(g_1,g_2,x_1,x_2,y_1,y_2) = (2x_1y_1-2x_2y_2-g_1(1-x_1^2-x_2^2-y_1^2-y_2^2) , \\
2x_1y_2+2x_2y_1+g_2(1-x_1^2-x_2^2-y_1^2-y_2^2))
\end{multline*}

$W''_2 \cap T \big| _G = D^2_{\varepsilon_1} \times D^4 \cap w^{-1}(0,0)$, so it is enough to prove that each point in $W''_2 \cap T \big| _G$ is a regular point of $w$.
\begin{multline*}
\mathrm{d}w = 
\bigg[
\begin{matrix}
-(1-x_1^2-x_2^2-y_1^2-y_2^2) & 0 & 2y_1+2g_1x_1 \\
0 & (1-x_1^2-x_2^2-y_1^2-y_2^2) & 2y_2-2g_2x_1
\end{matrix} \\
\begin{matrix}
-2y_2+2g_1x_2 & 2x_1+2g_1y_1 & -2x_2+2g_1y_2 \\
2y_1-2g_2x_2 & 2x_2-2g_2y_1 & 2x_1-2g_2y_2
\end{matrix}
\bigg]
\end{multline*}

If $|x|^2 + |y|^2 < 1$, then the first two columns of the matrix are linearly independent, therefore $(g,x,y)$ is a regular point. If $|x|^2 + |y|^2 = 1$, and $(g,x,y) \in W''_2 \cap T \big| _G$, then $xy = 0$, therefore $x = 0$, $|y| = 1$, or $y = 0$, $|x| = 1$. In the first case the third and fourth column of the matrix are two orthogonal vectors of length 2, therefore they are linearly independent. In the second case the same holds for the last two columns. Therefore the rank of $\mathrm{d}w$ is 2 at each point of $W''_2 \cap T \big| _G$, so it is a smooth submanifold.

\quad\!\!\!\!\!\!
\big[In Case 1 we use the map $w : \R^3 \rightarrow \R$, $w(g,x,y) = 2xy-g(1-x^2-y^2)$. Then $\mathrm{d}w = [1-x^2-y^2, 2y+2gx, 2x+2gy]$, and this is never 0. If $x^2+y^2<1$ then the first, if $x=0$, $|y|=1$, then the second, and if $y=0$, $|x|=1$ then the third entry is non-zero.\big]
\end{proof}

\begin{prop}
The boundary of $W_2$ is $\partial W_2 = B$.
\end{prop}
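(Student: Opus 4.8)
The plan is to show that $\partial W_2 = W_2 \cap \partial T$ and that this intersection equals $B$. Since $W_2$ is a smooth submanifold of $T$ by Proposition \ref{prop:W1smooth}, its boundary is the set of points of $W_2$ that fail to have a boundaryless Euclidean neighbourhood in $W_2$, so it suffices to locate these points.

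First I would compute $W_2 \cap \partial T$ fibrewise. Over $q \in M \setminus N$ the fibre $W_2 \cap F_q$ is $V$ in a suitable trivialisation (when $q \notin S_{\varepsilon_2}$) or $d_{\ell(t)}(V) = \{ 2x\theta y = \ell(t)(1-|x|^2-|y|^2)\}$ with $\ell(t) > 0$ (when $q \in S \setminus N$), and over $q \in N$ it is $\{xy = 0\}$; intersecting each of these with $\partial F_q = \{|x|^2+|y|^2 = 1\}$ forces $xy = 0$ in every case, so $W_2 \cap F_q \cap \partial F_q = \{(x,y) \mid xy = 0,\ |x|^2 + |y|^2 = 1\} = S^1 \times \{0\} \sqcup \{0\} \times S^1 = B \cap F_q$. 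Hence $W_2 \cap \partial T = B$.

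Next I would check that the points of $B$ are genuine boundary points of $W_2$ and that no point of $W_2$ in $\interior T$ is. Over $M \setminus S_{\varepsilon_2}$ this is exactly the identity $\partial W' = B$ over that region, which follows from the propositions of Section \ref{ssec:sp}: $\partial V = B \cap F$ in each fibre, $V$ is transverse to $\partial F$, and $B \subset \partial T$. Over $S \setminus N$ the diffeomorphism $d \colon T|_{S \setminus N} \to T|_{S \setminus N}$ from the proof of Proposition \ref{prop:W1smooth} carries $W' \cap T|_{S\setminus N}$ onto $W_2 \cap T|_{S\setminus N}$; since it acts fibrewise by the scaling $d_{\ell(t)}$, which preserves $\interior F_q$ and restricts to the identity on $\partial F_q$ (the scaling factor is $1$ when $|x|^2 + |y|^2 = 1$), it sends interior points of $W'$ to interior points of $W_2$ and the half-space charts of $W'$ at $B$ to half-space charts of $W_2$ at $B$. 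Over $N$ I would use the local model $W_2 \cap T|_G = \{(g,x,y) \in D^2_{\varepsilon_1} \times D^4 \mid 2xy = \bar g (1-|x|^2-|y|^2)\}$ and the computation in the proof of Proposition \ref{prop:W1smooth}: $\mathrm{d}w$ has rank $2$ along this set, points with $|x|^2+|y|^2 < 1$ lie over $\interior(D^2_{\varepsilon_1} \times D^4)$ and are interior, and at points with $|x|^2+|y|^2 = 1$ (forcing $x = 0, |y| = 1$ or $y = 0, |x| = 1$) one checks that the relevant columns of $\mathrm{d}w$ remain independent when restricted to the tangent space of $\partial T$, so $W_2$ meets $\partial T$ transversally there and these are half-space points. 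Combining with $W_2 \cap \partial T = B$ gives $\partial W_2 = B$. \big[In Case 1 the same argument applies with $D^1, S^0$ and the map $\R^3 \to \R$ in place of $D^2, S^1$ and $\R^6 \to \R^2$.\big]

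I expect the main obstacle to be the analysis over $N$: there the individual fibres $W_2 \cap F_q = \{xy = 0\}$ are not manifolds, so one cannot argue purely fibrewise and must work in the total space $T|_G$, and the points of $B$ lying over $N$ require a genuine transversality check of $W_2$ with $\partial T$ at the corner-type points $x = 0, |y| = 1$ and $y = 0, |x| = 1$ — essentially the smooth counterpart of the half-space chart construction carried out for the topological $W$ in Section \ref{sec:top}.
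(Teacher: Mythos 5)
Your proposal is correct and takes the same approach as the paper: the key step in both is the fibrewise computation $W_2 \cap \partial F_q = \{(x,y) \mid xy = 0,\ |x|^2+|y|^2 = 1\} = B \cap F_q$. The paper's proof simply writes $\partial W_2 \cap F = W_2 \cap \partial F$ and leaves the transversality of $W_2$ with $\partial T$ implicit (it is a consequence of the rank computations already carried out in Proposition \ref{prop:W1smooth}), whereas you spell this out region by region over $M \setminus S_{\varepsilon_2}$, $S \setminus N$, and $N$ --- extra care, not a different method.
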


\begin{proof}
For any fiber $F$ of $\T$
\[
\begin{aligned}
\partial W_2 \cap F = W_2 \cap \partial F &= \left\{ (x,y) \in D^4 \mid xy = 0, |x|^2 + |y|^2 = 1 \right\} = \\
 &= \left\{ (x,y) \in D^4 \mid (y = 0 \text{ and } |x| = 1) \text{ or } (x = 0 \text{ and } |y| = 1) \right\} = \\
 &= S^1 \times \{ 0 \} \bigsqcup \{ 0 \} \times S^1 = B \cap F \text{\,.}
\end{aligned}
\]
\end{proof}

\begin{prop}
$W_2$ is co-oriented (in Case 2).
\end{prop}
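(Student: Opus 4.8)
The plan is to build the co-orientation of $W_2$ from two overlapping pieces and then glue them. On $T \big| _{M \setminus N}$ the submanifold $W_2$ is obtained from $W'$ by the fibre-preserving diffeomorphism $d$ of the proof of Proposition \ref{prop:W1smooth} (with $d = \id$ where $\ell(t) = 1$, so that $W_2 = W'$ there); note $d$ is moreover the identity on $\partial D^4$ in each fibre, so $W_2 = W'$ near $B$ over $M\setminus N$. Since $\T \big| _{M \setminus N}$ is a $U(1)$-bundle, $W'$ carries the $U(1)$-equivariant co-orientation of the proof of Proposition \ref{prop:co-or}, fibrewise the one given by the rows of $\mathrm{d}v$. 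Pushing it forward by $d$ (and leaving it unchanged where $d = \id$) yields a co-orientation $\mathfrak{o}_1$ of $W_2 \cap T \big| _{M \setminus N}$, which is consistent because $d$ is a single fibre-preserving diffeomorphism carrying $W'$ onto $W_2$ over all of $S \setminus N$.

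Over $T \big| _{S_{\varepsilon_1}}$ I would instead use the defining function. In the trivializations $\varphi_k$, $\psi'_k$ of the proof of Proposition \ref{prop:W1smooth}, $W_2 \cap T \big| _G$ is the zero set of the map $w = (w_1,w_2) : \R^6 \to \R^2$ written there, $\mathrm{d}w$ has rank $2$ at every point of it, and since near $T \big| _N$ the pair $(T, W_2)$ is locally a product of a neighbourhood in $N$ with $(T \big| _G, W_2 \cap T \big| _G)$, the rows of $\mathrm{d}w$ form a frame of the normal bundle of $W_2$ in $T$; this frame orients the normal $2$-plane bundle and so gives a co-orientation $\mathfrak{o}_2$. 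The point that needs care is that $\mathfrak{o}_2$ is independent of $k$: under a change of trivialization the coordinates transform by $g \mapsto \lambda g$ and $(x,y) \mapsto (\alpha x, \beta y)$ with $\lambda,\alpha,\beta \in U(1)$, and the first condition of Lemma \ref{lem:loctriv} is exactly the compatibility that (as in Proposition \ref{prop:w''-well-def}) preserves the zero set of $w$; $w$ is then only pre- and post-composed with complex-linear maps, which are orientation-preserving on $\R^6$ and on $\R^2$, so the orientation of the normal bundle determined by the rows of $\mathrm{d}w$ is unchanged. Note that replacing $w = (w_1,w_2)$ by $(w_1,-w_2)$ reverses $\mathfrak{o}_2$, and this may be carried out consistently over any single connected component of $N$.

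Finally I would glue $\mathfrak{o}_1$ and $\mathfrak{o}_2$ along $W_2 \cap T \big| _{S_{\varepsilon_1} \setminus N}$. Each connected component of $W_2 \cap T \big| _{S_{\varepsilon_1}}$ lies over one component $N_0$ of $N$, is connected (it fibres over the connected space $S_{\varepsilon_1} \big| _{N_0}$ with connected fibres: $D^2 \times \{0\} \cup \{0\} \times D^2$ over points of $N_0$, and $\approx V$ elsewhere), and meets $T \big| _{M \setminus N}$ in the connected set fibring over $S_{\varepsilon_1} \big| _{N_0} \setminus N_0$. On a connected manifold two co-orientations are either equal or opposite, so on this overlap $\mathfrak{o}_1 = \pm\mathfrak{o}_2$; reversing $\mathfrak{o}_2$ over this component when the sign is $-$, and doing so for every component of $N$, makes the two co-orientations agree and hence defines a co-orientation of $W_2$. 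I expect the main obstacle to be the well-definedness of $\mathfrak{o}_2$ in the second step, which is where Lemma \ref{lem:loctriv} is genuinely used; the gluing is then routine. (That the co-orientation can be arranged to restrict over $B$ to that of $Y_1 \cup Y_2$, as needed for Theorem \ref{thm:smooth}, follows from the computation in Proposition \ref{prop:co-or} over $B \setminus T \big| _N$ together with an analogous check of the sign of $\mathrm{d}w$ at the points $(g,x,0)$ with $|x| = 1$.)
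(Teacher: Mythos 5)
Your proof is correct, but it takes a genuinely different (and considerably longer) route than the paper. Both begin identically: co-orient $W_2 \cap T\big|_{M\setminus N}$ by pushing forward the co-orientation of $W'$ along the fibre-preserving diffeomorphism $d$. The paper then finishes in one line: $W_2 \cap T\big|_N$ is a union of two codimension-$2$ submanifolds of $W_2$ (the subbundles over $N$ with fibres $D^2\times\{0\}$ and $\{0\}\times D^2$), and a co-orientation of the complement of a codimension-$\geq 2$ subset always extends uniquely over it (a section of the orientation $\Z_2$-cover of the normal bundle, defined off such a set, extends because punctured balls of the right codimension stay connected). You instead build a second co-orientation $\mathfrak{o}_2$ over $T\big|_{S_{\varepsilon_1}}$ from the defining function $w$, verify its well-definedness via the transition formula $\overline{\varphi_{k_1,k_2}} = \pr_2\circ\psi_{k_1,k_2}$ of Lemma \ref{lem:loctriv} (your normalization of the $(U(1)\times U(1))$-action as $(x,y)\mapsto(\alpha x,\beta y)$ differs from the paper's $(\alpha x,\alpha^{-1}\beta y)$, but the conclusion — that the transition acts by complex-linear, hence orientation-preserving, maps on both source and target of $w$ — is correct either way), and then glue $\mathfrak{o}_1$ to $\mathfrak{o}_2$ with a per-component sign choice, using connectivity of the overlap fibring over $S_{\varepsilon_1}\big|_{N_0}\setminus N_0$. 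Your version is more explicit and produces a concrete frame of the normal bundle over $T\big|_N$, which is pleasant and also makes the boundary compatibility at $B\cap T\big|_N$ a direct computation rather than a continuity argument; the cost is the extra bookkeeping (well-definedness, connectivity of overlaps, the sign flip). The paper's codimension-$2$ extension principle dispenses with all of that at once, so it is the slicker proof; but nothing you wrote is wrong, and the step you flag as the main obstacle (well-definedness of $\mathfrak{o}_2$) does indeed go through exactly because of the transition-map identity from Lemma \ref{lem:loctriv}.
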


\begin{proof}
First, $W_2 \cap T \big|_{M \setminus N}$ is co-oriented (it is the image of $W'$ under a diffeomorphism of $T \big|_{M \setminus N}$, and $W'$ is co-oriented, as we saw in Section \ref{ssec:sp}). This is enough, because $W_2 \cap T \big|_N$ is the union of two codimension-2 submanifolds in $W_2$ (they are the subbundles of $\T$ over $N$ with fibers $D^2 \times \{ 0 \}$ and $\{ 0 \} \times D^2$), so the co-orientation of $W_2 \cap T \big|_{M \setminus N}$ extends to $W_2$.
\end{proof}

\subsection{A homeomorphism between $W$ and $W_2$} \label{ssec:homeo}

We will show a homeomorphism between $(T \big| _S \cap W)$ and $W''_2$ that is identical on $\partial (T \big| _S \cap W)$. It immediately extends to a homeomorphism $W \rightarrow W_2$ that is identical on $\partial W$.

First we define a subset $W''_0 \subset T \big| _S$. 

For any $p \in N$ choose a $k$ with $p \in U_k$, use $\varphi_k$ to identify $D^2$ with the fiber $G$ of $\S$ over $p$, and use $\psi'_k$ to identify $D^2 \times D^4$ with $T \big| _{G}$. Let $F_g$ be the fiber of $\T$ over $g \in G$. Let
\[
T \big| _{G} \cap W''_0 = \left\{ (z,x,y) \in T \big| _{G} \approx D^2 \times D^4 \mid 2xy = \bar{z}(1-|x|^2 - |y|^2) \right\} 
\]

\begin{prop}
The set $W''_0$ is well-defined, and it is homeomorphic to $W''_2$.
\end{prop}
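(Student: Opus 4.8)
The plan is to split the claim into two halves: well-definedness of $W''_0$, and the existence of a homeomorphism $W''_0 \approx W''_2$. For the first half I would argue exactly as in Proposition \ref{prop:w''-well-def}: the defining equation $2xy = \bar z (1-|x|^2-|y|^2)$ in $T\big|_G \approx D^2 \times D^4$ must be shown to be independent of the choice of $k$. The subtlety is that the coordinate $z$ on the fiber $G$ of $\S$ and the coordinates $(x,y)$ on $F_g$ transform \emph{together} under a change of trivialization, via the transition maps $\varphi_{k_1,k_2}$ and $\psi'_{k_1,k_2}$. By Lemma \ref{lem:loctriv} (first condition) we have $\overline{\varphi_{k_1,k_2}} = \pr_2 \circ \psi_{k_1,k_2}$, and since $\psi'_k$ is built from $\psi_k$ and the (fiberwise $U(1)\times U(1)$-equivariant) isomorphism $H$, the action on the $y$-coordinate of $F_g$ is by $\pr_2 \circ \psi'_{k_1,k_2}$ while the $x$-coordinate and the scalar $1-|x|^2-|y|^2$ are unaffected up to $U(1)$-phases; when $z \mapsto \varphi_{k_1,k_2}\cdot z$ and $y \mapsto (\pr_2\circ\psi'_{k_1,k_2})\, y$, the two phases on the two sides of $2xy = \bar z(\cdots)$ cancel because $\overline{\varphi_{k_1,k_2}} = \pr_2\circ\psi'_{k_1,k_2}$. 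Hence the solution set is invariant, so $W''_0$ is well-defined.

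For the second half, the natural approach is to exhibit a fiberwise homeomorphism of $T\big|_S$ that carries $W''_0$ to $W''_2$ and is the identity on the relevant boundary. Recall $W''_2$ is defined, over a point $q = \varphi_k(p,t\theta)$ with $t>0$, by $2x\theta y = \ell(t)(1-|x|^2-|y|^2)$, i.e. $2xy = \ell(t)\theta^{-1}(1-|x|^2-|y|^2) = \overline{\ell(t)\theta}\,(1-|x|^2-|y|^2)$, and over $N$ (where $t=0$) by $xy=0$; whereas $W''_0$ over the same $q$ is $2xy = \bar z(1-|x|^2-|y|^2)$ with $z = t\theta$, i.e. $2xy = \overline{t\theta}(1-|x|^2-|y|^2)$. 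Thus $W''_0$ and $W''_2$ differ only by reparametrizing the radial coordinate on the base fiber $G$ from $t$ to $\ell(t)$: the map $\Phi$ that sends the fiber $F_{t\theta}$ of $\T$ identically onto the fiber $F_{\ell(t)\theta}$ (using a common trivialization $\psi'_k$) is a homeomorphism $T\big|_G \to T\big|_G$ for each $p \in N$, since $t \mapsto \ell(t)$ is a homeomorphism of $[0,1]$, and it patches over $N$ because $\ell$ depends only on $t=|z|$, which is trivialization-independent. By construction $\Phi$ takes $W''_0 \cap F_{t\theta}$ to $W''_2 \cap F_{\ell(t)\theta}$, hence $\Phi(W''_0) = W''_2$; and since $\ell(t)=t$ for $t$ near $0$ (in particular $\ell(0)=0$), $\Phi$ is the identity on $T\big|_N$ and on a neighbourhood of it, so in particular identical on $\partial(T\big|_S \cap W)$ in the sense needed for the extension to $W \to W_2$.

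I expect the main obstacle to be bookkeeping rather than anything conceptual: one must verify that the coordinate change between two $\psi'_k$'s acts on the $D^4$-factor by an honest $U(1)\times U(1)$-element whose second component is $\overline{\varphi_{k_1,k_2}}$ \emph{and simultaneously} that the base coordinate $z$ transforms by $\varphi_{k_1,k_2}$, so that the two phase ambiguities in $2xy = \bar z(1-|x|^2-|y|^2)$ precisely cancel. This is essentially a repackaging of the computation already carried out in the proof of Lemma \ref{lem:loctriv} (the transition map between $\tilde\psi'_k$ and $\tilde\sigma_j$ being $(1,\theta)$), so I would cite that lemma and spell out only the cancellation. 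The homeomorphism statement itself is then immediate from the radial reparametrization $\Phi$, and the fact that it is identity on the boundary follows from $\ell = \id$ near $0$ together with the observation (as in the previous proof) that $W''_0$ and $W''_2$ agree with $W'$ on the overlap $\partial(T\big|_S)$.
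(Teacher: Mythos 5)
Your treatment of well-definedness is sound and is a legitimate alternative to the paper's: you check directly that the phase of $xy$ transforms by $\pr_2\circ\psi_{k_1,k_2}=\beta$ under the action $(\alpha,\beta)(x,y)=(\alpha x,\alpha^{-1}\beta y)$, while $\bar z$ also transforms by $\overline{\overline\beta}=\beta$, so the equation $2xy=\bar z(1-|x|^2-|y|^2)$ is invariant. The paper avoids this computation entirely by observing that $W''_0$ is the image of the already-well-defined $W''_2$ under a manifestly well-defined fiberwise map, but your direct cancellation argument also works.

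The homeomorphism half, however, has a genuine gap. Your map $\Phi$ reparametrizes the \emph{base} disk $G\approx D^2$ by $t\theta\mapsto\ell(t)\theta$ while keeping the $D^4$-fiber fixed, and you justify bijectivity by asserting that $t\mapsto\ell(t)$ is a homeomorphism of $[0,1]$. It is not: by construction $\ell(t)=1$ for all $t\in[\varepsilon_2,1]$, so $\ell$ collapses the interval $[\varepsilon_2,1]$ to the single point $1$ and is far from injective. Consequently $\Phi$ collapses the annulus $\varepsilon_2\le|z|\le1$ of $G$ onto the circle $|z|=1$ and is not a homeomorphism of $T\big|_G$. (A secondary issue: even granting invertibility of $\ell$, your $\Phi$ sends $F_{t\theta}$ to $F_{\ell(t)\theta}$, which carries $W''_2\cap F_{t\theta}$ to $W''_0\cap F_{\ell(t)\theta}$, not the other direction; the correct map $W''_0\to W''_2$ would need $\ell^{-1}$.) The constancy of $\ell$ on $[\varepsilon_2,1]$ is not a removable quirk of the construction — it is what makes $W_2$ coincide with $W'$ outside a compact neighbourhood of $N$ — so a base reparametrization cannot be repaired. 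The paper instead keeps every fiber $F_q$ fixed and rescales \emph{inside} $D^4$ by $d\big|_{F_{t\theta}}=d_{t/\ell(t)}$ (with $d_L$ as in Proposition \ref{prop:w''-well-def}); since each $d_L$ with $L>0$ is a diffeomorphism of $D^4$, $t/\ell(t)>0$ for $t>0$, and $t/\ell(t)\to1$ as $t\to0$ (so $d\to\mathrm{id}$ continuously), this gives a genuine (fiber-preserving) homeomorphism carrying $W''_2$ to $W''_0$, sidestepping the non-invertibility of $\ell$ entirely.
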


\begin{proof}
From the definition we see that $F_0 \cap W''_0 = \left\{ (x,y) \in F_0 \approx D^4 \mid xy = 0 \right\}$, and $F_{t\theta} \cap W''_0 = \left\{ (x,y) \in F_{t\theta} \approx D^4 \mid 2x\theta y = t(1-|x|^2 - |y|^2) \right\}$ (if we use $\varphi_k$ and $\psi'_k$). Let $d : T \big| _{G} \rightarrow T \big| _{G}$, such that $d \big|_{F_0} = \id$ and $d \big|_{F_{t\theta}} = d_{\frac{t}{\ell(t)}}$ (see the proof of Proposition \ref{prop:w''-well-def} for the definition of $d_L$). 

We saw that $d$ is well-defined (independent of $k$). It is continuous, because $\frac{t}{\ell(t)} = 1$ if $t \leq \varepsilon_1$ and $d_1 = \id$. The calculation in the proof of Proposition \ref{prop:w''-well-def} shows that $d \bigl( T \big| _{G} \cap W''_2 \bigr) = T \big| _{G} \cap W''_0$\,. Since $W''_2$ is well-defined, this is also true for $W''_0$. And if we consider all these maps $d$ for each $p \in N$, together they form a homeomorphism (in fact a diffeomorphism) between $W''_2$ and $W''_0$.
\end{proof}

We will construct a homeomorphism $I : T \big| _S \cap W \rightarrow W''_0$ which is identical on the boundary. This, composed with the homeomorphism we just defined will be a suitable homeomorphism $T \big| _S \cap W \rightarrow W''_2$.

\bek
Let $p \in N$, and let $G$ be the fiber of $\S$ over $p$. We will construct a homeomorphism $i : T \big| _G \cap W \rightarrow T \big| _G \cap W''_0$, this will be made up of two pieces, $i'$ and $i''$.

Choose a $k$ with $p \in U_k$, and use $\varphi_k$ and $\psi'_k$ for the identifications $G \approx D^2$ and $T \big| _{G} \approx D^2 \times D^4$ respectively. Let $F_g$ be the fiber of $\T$ over $g \in G$, and $\tilde{V} = F_0 \cap W$.

Let the maps $a, b : T \big| _G \cap \overline{W'} \rightarrow D^2$ and $i' : T \big| _G \cap \overline{W'} \rightarrow T \big| _{G} \approx D^2 \times D^4$ be defined by the following formulas. (Here $t \in [0,1]$, $\theta \in S^1$, $x,y \in D^2$, and it follows from Lemma \ref{lem:loctriv} that $(x,y)\in F_{t\theta} \cap (T \big| _G \cap \overline{W'})$ iff either $t=0$, $|x|+|y|=1$ or $t>0$, $2x\theta y = 1-|x|^2-|y|^2$.)
\[
\begin{aligned}
a(t\theta,x,y) &= \left( (1-t)(1-|x|)(|x|^2-1)+1 \right) x \\
b(t\theta,x,y) &= \left( (1-t)(1-|y|)(|y|^2-1)+1 \right) y \\
i'(t\theta,x,y) &= 
\begin{cases}
\left( \frac{2\overline{a(t\theta,x,y)b(t\theta,x,y)}}{1-|a(t\theta,x,y)|^2-|b(t\theta,x,y)|^2}, a(t\theta,x,y), b(t\theta,x,y) \right) & \text{if $|x|^2+|y|^2<1$} \\
(t\theta,x,y) & \text{if $|x|^2+|y|^2=1$}
\end{cases}
\end{aligned}
\]
Note that (since $|x|+|y|=1$) $|x|^2+|y|^2=1$ holds iff $|x|=1$, $y=0$ or $|y|=1$, $x=0$. It is also equivalent to $1-|a(t\theta,x,y)|^2-|b(t\theta,x,y)|^2 = 0$ (because $|a(t\theta,x,y)| \leq |x|$ and $|b(t\theta,x,y)| \leq |y|$).

\begin{prop} \label{prop:i'-well-def} 
The map $i'$ is well-defined (ie.\ independent of $k$).
\end{prop}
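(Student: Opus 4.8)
The plan is to show that each of the three ingredients of the formula for $i'$ — the scalar factors appearing in $a$ and $b$, the ``coordinates'' $(x,y)$, and the first component $\frac{2\overline{ab}}{1-|a|^2-|b|^2}$ — transforms correctly under a change of trivializing index from $k_1$ to $k_2$. Everything rests on Lemma \ref{lem:loctriv}, which tells us exactly how the two descriptions of $T\big|_G$ are related: if $q=\varphi_{k_1}(p,g)=\varphi_{k_2}(p,g')$ then, writing $\lambda=\varphi_{k_1,k_2}(p)\in U(1)$, we have $g'=\lambda g$, while the transition map $\psi_{k_1,k_2}(p)$ has second component $\overline{\lambda}=\lambda^{-1}$ (first condition of Lemma \ref{lem:loctriv}); hence if $(x,y)$ are the $\psi'_{k_1}$-coordinates of a point of $F_q$ and $(x',y')$ the $\psi'_{k_2}$-coordinates, then $x'=\mu x$ and $y'=\lambda^{-1}y$ for some $\mu\in U(1)$ (the first component of $\psi_{k_1,k_2}(p)$), and moreover the angular coordinate $\theta$ of $q$ in the disc $D^2$ gets multiplied by $\lambda$, consistently with $g'=\lambda g$.

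First I would record that $t=|g|=|g'|$ is independent of $k$, and that $|x|,|y|,|x|^2+|y|^2$ are all unchanged because $\mu,\lambda\in U(1)$ act by unitary scalars on the $\C$-factors of $D^4=D^2\times D^2$. Consequently the scalar multipliers $(1-t)(1-|x|)(|x|^2-1)+1$ and $(1-t)(1-|y|)(|y|^2-1)+1$ depend only on quantities that are $k$-independent, so $a$ and $b$ transform by exactly the same unitary scalars as $x$ and $y$: $a(\text{in }k_2)=\mu\,a(\text{in }k_1)$ and $b(\text{in }k_2)=\lambda^{-1}b(\text{in }k_1)$. In particular $|a|,|b|$ and the denominator $1-|a|^2-|b|^2$ are $k$-independent, and the equivalences noted after the definition (vanishing of that denominator $\iff |x|^2+|y|^2=1$, using $|a|\le|x|$, $|b|\le|y|$ and $|x|+|y|=1$ on $\overline{W'}$) hold regardless of $k$, so the case distinction in the formula for $i'$ is well-posed.

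Next I would check that the output point of $i'$ is the same element of $T\big|_G$ whether computed from $k_1$ or $k_2$. On the locus $|x|^2+|y|^2=1$ this is immediate: $i'$ returns the point $(x,y)$ in the fiber $F_{t\theta}$, and both ``$(x,y)$ viewed in $F_{t\theta}$'' and ``$g=t\theta$'' transform by the correct transition data (the $(x,y)$ part by $(\mu,\lambda^{-1})$, the base point $g$ by $\lambda$), so the two recipes name the same point of $T\big|_G$ — this is just the statement that $\psi'_{k_1}$ and $\psi'_{k_2}$ are trivializations of one and the same bundle with $\varphi_{k_1},\varphi_{k_2}$ the compatible base trivializations. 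On the locus $|x|^2+|y|^2<1$ the output lies in the fiber over $\frac{2\overline{ab}}{1-|a|^2-|b|^2}\in D^2$ with ``coordinates'' $(a,b)$; here I would use that the base point $\frac{2\overline{ab}}{1-|a|^2-|b|^2}$ transforms as $\overline{\mu a}\cdot\overline{\lambda^{-1}b}/(1-|a|^2-|b|^2)=\overline{\mu}\lambda\cdot\frac{2\overline{ab}}{1-|a|^2-|b|^2}$, i.e.\ it is multiplied by $\overline{\mu}\lambda$. For $i'$ to be well-defined I need this to match the way $\varphi_{k_1}$ and $\varphi_{k_2}$ are related over that point, i.e.\ the $\S$-transition factor there must also be $\overline{\mu}\lambda$ — equivalently, the relation $x'=\mu x$, $y'=\lambda^{-1}y$ between the fiber coordinates must be \emph{globally consistent} with the base identification $\varphi_{k_1,k_2}=\lambda$ in the sense that the first component $\mu$ of $\psi_{k_1,k_2}$ is locally constant on $U_{k_1}\cap U_{k_2}$; then $(a,b)$ as $\psi'$-coordinates in the fiber over $\frac{2\overline{ab}}{1-|a|^2-|b|^2}$ name the same point in $T\big|_G$ from either index. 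Pinning down this compatibility — verifying that the pair (base transition $\lambda$, fiber transition $(\mu,\lambda^{-1})$) coming from Lemma \ref{lem:loctriv} really does make the formula $i'$ a coherent assignment over the moved base point $\frac{2\overline{ab}}{1-|a|^2-|b|^2}$, rather than only over the original point $t\theta$ — is the one genuinely delicate point; I expect it to reduce, via the description of $\tilde\psi'_k$ in the proof of Lemma \ref{lem:loctriv}, to the observation that $\mu=\pr_1\circ\psi_{k_1,k_2}$ is a function of $p\in N$ only (not of the fiber coordinate or of the radial parameter $t$), so it is the same whether evaluated ``at $q$'' or ``at the image of $q$ under $i'$'' since both sit over the same $p$. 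Once that is in hand, combining the three computations shows $i'(t\theta,x,y)$ is independent of $k$, which is the claim.
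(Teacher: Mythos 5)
Your calculation of how the $y$-coordinate transforms is wrong, and this is not a minor slip: it is the exact place where the argument either closes or fails. You write $y' = \lambda^{-1}y$, applying the second component $\overline{\lambda}=\lambda^{-1}$ of $\psi_{k_1,k_2}(p)$ directly to $y$ as if the structure group $U(1)\times U(1)$ acted by $(\alpha,\beta)(x,y)=(\alpha x,\beta y)$. But the paper's $\T$ is identified with a $(U(1)\times U(1))$-bundle via the \emph{twisted} action $(\alpha,\beta)(x,y)=(\alpha x,\alpha^{-1}\beta y)$ — this choice is made deliberately in Section~\ref{ssec:pre} (so that Lemma~\ref{lem:gr} applies), and it is what Lemma~\ref{lem:loctriv} is stated against. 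With the correct action, writing $(\alpha,\beta)=\psi_{k_1,k_2}(p)$, one has $x_2=\alpha x_1$ and $y_2=\alpha^{-1}\beta y_1$, hence $a_2=\alpha a_1$, $b_2=\alpha^{-1}\beta b_1$, so $\overline{a_2 b_2}=\overline{\beta}\,\overline{a_1 b_1}$: the $\alpha$ cancels, and the base point $z=\tfrac{2\overline{ab}}{1-|a|^2-|b|^2}$ transforms by $\overline{\beta}=\varphi_{k_1,k_2}(p)$, exactly as the $\S$-transition requires.

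Your version produces $z$ transforming by $\overline{\mu}\lambda$ where $\mu=\pr_1\circ\psi_{k_1,k_2}(p)$, and you then hope to dispose of the spurious $\overline{\mu}$ by arguing that $\mu$ depends only on $p$. That observation is true but irrelevant: it tells you $\mu$ is the same at $q$ and at $i'(q)$, not that $\mu=1$. There is in general no reason for $\mu$ to be trivial, so the ``compatibility'' you defer to does not exist under your transformation rule. The proof as written therefore has a genuine gap, and it is precisely the gap that the paper's choice of the twisted $U(1)\times U(1)$-action is designed to avoid. Otherwise your strategy — check that $(a,b)$ transforms by $\psi_{k_1,k_2}$, check that the base point transforms by $\varphi_{k_1,k_2}$, and note that the case distinction is $k$-independent — is the same as the paper's.
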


\begin{proof}
Assume that $\psi'_{k_1}(\varphi_{k_1}(p,t_1\theta_1),(x_1,y_1)) = \psi'_{k_2}(\varphi_{k_2}(p,t_2\theta_2),(x_2,y_2))$, for some $k_1$, $k_2$, then we want to prove that $\psi'_{k_1}(\varphi_{k_1}(p,z_1),(a_1,b_1)) = \psi'_{k_2}(\varphi_{k_2}(p,z_2),(a_2,b_2))$, where $a_i = a(t_i\theta_i,x_i,y_i)$, $b_i = b(t_i\theta_i,x_i,y_i)$, and $z_i = \frac{2\overline{a_ib_i}}{1-|a_i|^2-|b_i|^2}$ if $|x_i|^2+|y_i|^2<1$ and $z_i = t_i\theta_i$ if $|x_i|^2+|y_i|^2=1$.

Our assumption means that $\varphi_{k_1}(p,t_1\theta_1) = \varphi_{k_2}(p,t_2\theta_2)$ (therefore $t_1=t_2$ and $\theta_2 = \varphi_{k_1,k_2}(p)(\theta_1)$), and that $\psi_{k_1}(p,(x_1,y_1)) = \psi_{k_2}(p,(x_2,y_2))$. Let $(\alpha, \beta) = \psi_{k_1,k_2}(p) \in U(1) \times U(1)$, then $x_2 = \alpha x_1$, and $y_2 = \alpha^{-1} \beta y_1$. This implies that $a_2 = \alpha a_1$ and $b_2 = \alpha^{-1} \beta b_1$, therefore $\psi_{k_1}(p, \allowbreak (a_1,b_1)) = \psi_{k_2}(p,(a_2,b_2))$.

We can also see that if $|x_i|^2+|y_i|^2<1$, then $z_2 = \overline{\beta} z_1$ (note that $|x_1|^2+|y_1|^2 = |x_2|^2+|y_2|^2$). By the first part of Lemma \ref{lem:loctriv} $\varphi_{k_1,k_2}(p) = \overline{\pr_2(\psi_{k_1,k_2}(p))} = \overline{\beta}$, therefore $\theta_2 = \overline{\beta} \theta_1$, so $z_2 = \overline{\beta} z_1$ holds even if $|x_i|^2+|y_i|^2=1$. Therefore $\varphi_{k_1}(p,z_1) = \varphi_{k_2}(p,z_2)$.

From the definition of $\psi'_k$ we see that the two equalities we have proved imply that $\psi'_{k_1}(\varphi_{k_1}(p,z_1),(a_1,b_1)) = \psi'_{k_2}(\varphi_{k_2}(p,z_2),(a_2,b_2))$.
\end{proof}

\begin{prop}
The map $i'$ is continuous.
\end{prop}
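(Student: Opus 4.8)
The plan is to write the domain $T\big|_G\cap\overline{W'}$ as the union of the open subset $O=\{|x|^2+|y|^2<1\}$ and the closed subset $C=\{|x|^2+|y|^2=1\}$, to verify continuity on each of these separately, and then to treat sequences in $O$ converging to a point of $C$ (the remaining cases being trivial, since $O$ is open and $i'$ restricted to $C$ is the inclusion). First I would record the elementary facts about the scalar factors $\lambda_x=(1-t)(1-|x|)(|x|^2-1)+1$ and $\lambda_y=(1-t)(1-|y|)(|y|^2-1)+1$ occurring in $a=\lambda_x x$, $b=\lambda_y y$: they depend on the base point only through $t=|g|$ and are continuous everywhere, and $\lambda_x,\lambda_y\in[0,1]$, so that $|a|\le|x|$, $|b|\le|y|$ and therefore $1-|a|^2-|b|^2\ge 1-|x|^2-|y|^2\ge 0$, with equality precisely on $C$.

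On $O$ the denominator $1-|a|^2-|b|^2$ is strictly positive, so $i'$ is there a product and quotient of continuous functions, hence continuous. On $C$, the equation defining $\overline{W'}$ (Lemma~\ref{lem:loctriv}: $2x\theta y=1-|x|^2-|y|^2$ for $t>0$, and $|x|+|y|=1$ for $t=0$) combined with $|x|^2+|y|^2=1$ forces $xy=0$, so $C\cap\overline{W'}=\{x=0,|y|=1\}\cup\{y=0,|x|=1\}$, and on this set $i'$ is given by its second branch $(t\theta,x,y)\mapsto(t\theta,x,y)$, the inclusion into $T\big|_G$, which is continuous. Thus it remains to show: if $p_n=(t_n\theta_n,x_n,y_n)\in O$ converges to $p_0=(t_0\theta_0,x_0,y_0)\in C$, then $i'(p_n)\to i'(p_0)=p_0$. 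The fibrewise map $(g,x,y)\mapsto(g,y,x)$ preserves $T\big|_G\cap\overline{W'}$, interchanges the two pieces of $C\cap\overline{W'}$, and conjugates $i'$ to itself (it exchanges $a$ and $b$ and leaves the first coordinate fixed), so we may assume $y_0=0$, $|x_0|=1$. Then $|x_n|\to1$, $|y_n|\to0$, $t_n\to t_0$, so $\lambda_{x_n}\to1$ (and $\lambda_{y_n}\le1$ always), giving $a_n=\lambda_{x_n}x_n\to x_0$ and $b_n=\lambda_{y_n}y_n\to0=y_0$; the one nontrivial point is that the first coordinate $z_n=2\overline{a_nb_n}/(1-|a_n|^2-|b_n|^2)$ tends to $t_0\theta_0$.

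This $0/0$ limit is where the real work lies, and it is handled by substituting the equation of $\overline{W'}$ into $z_n$. For $t_n>0$ that equation yields $2|x_n||y_n|=u_n:=1-|x_n|^2-|y_n|^2$ and $2\overline{x_ny_n}=\theta_n u_n$, hence $z_n=\lambda_{x_n}\lambda_{y_n}\theta_n\cdot u_n/(u_n+A_n+B_n)$ where $A_n=(1-\lambda_{x_n}^2)|x_n|^2\ge0$ and $B_n=(1-\lambda_{y_n}^2)|y_n|^2\ge0$; using $1-\lambda_{x_n}=(1-t_n)(1-|x_n|)(1-|x_n|^2)$, $1-|x_n|^2=u_n+|y_n|^2$ and $u_n=2|x_n||y_n|$, a routine estimate gives $A_n/u_n\to0$ and $B_n/u_n\to0$, so $u_n/(u_n+A_n+B_n)\to1$ and $z_n\to1\cdot t_0\cdot\theta_0$ (when $t_0>0$ one has $\theta_n\to\theta_0$ because $t_n\theta_n\to t_0\theta_0$; when $t_0=0$ one has $\lambda_{y_n}\to0$, which already forces $z_n\to0$). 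For $t_n=0$ (which forces $t_0=0$) the equation reads $|x_n|+|y_n|=1$, so $2|x_n||y_n|=u_n\le 1-|a_n|^2-|b_n|^2$ and hence $|z_n|\le\lambda_{x_n}\lambda_{y_n}\le\lambda_{y_n}\to0$. Thus $i'(p_n)\to p_0$ in every case, proving continuity; Case~1 is identical with the two circles replaced by pairs of points, and is no harder. The main obstacle is exactly this $0/0$ analysis: one must control the relative rates at which $u_n$, $1-|x_n|$ and $|y_n|$ vanish, and it is the defining equation of $\overline{W'}$ that makes these rates compatible. Everything else is bookkeeping about gluing the two branches of the formula.
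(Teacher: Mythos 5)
Your proof is correct, and it follows essentially the same route as the paper: both proofs reduce to the $0/0$ behaviour of the first coordinate $z=2\overline{ab}/(1-|a|^2-|b|^2)$ near $C$, and both resolve it using the relation $|x|+|y|=1$ (equivalently $2|xy|=1-|x|^2-|y|^2$) that holds on $T\big|_G\cap\overline{W'}$, together with $\lambda_x\to1$, $\lambda_y\to t_0$. The only difference is algebraic bookkeeping: the paper divides numerator and denominator by $\overline{xy}$ and then estimates the absolute value and argument of $(1-|x|\lambda_x)/\overline{xy}$, whereas you split the denominator as $u+A+B$ with $A,B\geq0$ and show $A/u,B/u\to0$, which pulls the phase $\theta$ cleanly out front as $z=\lambda_x\lambda_y\theta\,u/(u+A+B)$; this is a mildly tidier organization of the same estimate, and your explicit handling of the $t_n=0$ subsequence is a small completeness point the paper leaves implicit.
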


\begin{proof}
We need to prove this at points $(x,y)$, where $|x|^2+|y|^2=1$, ie.\ $|x|=1$, $y=0$ or $|y|=1$, $x=0$. The maps $a$ and $b$ are everywhere continuous (and smooth), and at such points $a(t\theta,x,y)=x$ and $b(t\theta,x,y)=y$.

Now fix some $x_0$ of absolute value 1, and $t_0\theta_0 \in D^2$, we will consider the point $(t_0\theta_0,x_0,0)$ (points of the other type can be handled similarly). We have to prove that the following expression tends to $t_0\theta_0$ as $(t\theta,x,y) \rightarrow (t_0\theta_0,x_0,0)$:
\[
\!\!\!\!\!\!\!\!\!\!\!\!\!\!\!\!\!\!\!\!\!\!\!\!\!\!\!\!\!\!\!\!\!\!\!\!\!\!\!\!\!\!\!\!\!\!\!\!\!\!\!\!\!\!\!\!\!\!\!\!\!\!\!\!\!\!\!\!\!\!\!\!\!\!\!\!\!\!\!\!\!\!\!\!\!\!\!\!\!\!\!\!\!\!\!\!\!\!\!\!\!\!\!\!\!\!\!\!\!\!\!\!\!\!\!\!\!\!\!\frac{2\overline{a(t\theta,x,y)b(t\theta,x,y)}}{1-|a(t\theta,x,y)|^2-|b(t\theta,x,y)|^2} = 
\]
\[
= \frac{2\overline{xy} \bigl((1-t)(1-|x|) (|x|^2-1)+1 \bigr) \bigl((1-t)(1-|y|) (|y|^2-1)+1 \bigr)} {1-|x|^2 \bigl((1-t)(1-|x|) (|x|^2-1)+1 \bigr)^2-y\bar{y} \bigl( (1-t)(1-|y|) (|y|^2-1)+1 \bigr)^2} = 
\]
\[
\!\!\!\!\!\!\!\!\!\!\!\!\!\!\!\!\!\!\!\!\!\!= \frac{2 \bigl( (1-t)(1-|x|) (|x|^2-1)+1 \bigr) \bigl( (1-t)(1-|y|) (|y|^2-1)+1 \bigr) }{\frac{1- |x|((1-t)(1-|x|) (|x|^2-1)+1)}{\overline{xy}}\bigl(1+|x|\bigl( (1-t)(1-|x|) (|x|^2-1)+1 \bigr)\bigr)-\frac{y}{\overline{x}}\bigl( (1-t)(1-|y|) (|y|^2-1)+1 \bigr)^2}
\]

Here $((1-t)(1-|x|) (|x|^2-1)+1) \rightarrow 1$, and $((1-t)(1-|y|) (|y|^2-1)+1) \rightarrow t_0$, $(1+|x|( (1-t)(1-|x|) (|x|^2-1)+1 )) \rightarrow 2$, and $\frac{y}{\overline{x}}( (1-t)(1-|y|) (|y|^2-1)+1)^2 \rightarrow 0$. Since $|y|=1-|x|$ for all points $(t\theta,x,y) \in T \big| _G \cap \overline{W'}$, 
\begin{multline*}
\left| \frac{1- |x|((1-t)(1-|x|) (|x|^2-1)+1)}{\overline{xy}} \right| = \frac{|1 - |x| - |x|(1-t)(1-|x|)(|x|^2-1)|}{\left|x\right| (1 - \left| x \right|)} = \\
= \left| \frac{1}{|x|} - (1-t)(|x|^2-1) \right| \rightarrow 1
\end{multline*}
Therefore if $t_0=0$, then the whole expression tends to 0. If $t_0 \neq 0$, then $\theta$ is well-defined for points $(t\theta,x,y)$ in a neighbourhood of $(t_0\theta_0,x_0,0)$, and it is the argument of $\overline{xy}$, therefore $\frac{1- |x|((1-t)(1-|x|) (|x|^2-1)+1)}{\overline{xy}} \rightarrow \overline{\theta_0}$.
\end{proof}

\begin{prop}
The map $i'$ is injective.
\end{prop}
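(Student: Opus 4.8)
The plan is to reconstruct $(t\theta,x,y)$ from $i'(t\theta,x,y)$. I would fix trivializations $\varphi_k,\psi'_k$ at $p$ and express the source points accordingly; by Proposition \ref{prop:i'-well-def} the argument does not depend on this choice. Recall from the construction of $i'$ that every point of $T|_G\cap\overline{W'}$ satisfies $|x|+|y|=1$, so $|x|^2+|y|^2=1$ holds precisely when $x=0$ or $y=0$. Given $i'(t_1\theta_1,x_1,y_1)=i'(t_2\theta_2,x_2,y_2)$, I would first dispose of two easy cases. If $|x_1|^2+|y_1|^2=|x_2|^2+|y_2|^2=1$, then $i'$ is the identity on these points, so they agree. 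If exactly one of these norms equals $1$, say $|x_1|^2+|y_1|^2=1$, then the $D^4$-component of $i'(t_1\theta_1,x_1,y_1)$ is $(x_1,y_1)$, of squared norm $1$, while that of $i'(t_2\theta_2,x_2,y_2)$ is $(a_2,b_2)$ with $|a_2|^2+|b_2|^2\le|x_2|^2+|y_2|^2<1$ (since $|a_2|\le|x_2|$, $|b_2|\le|y_2|$) --- a contradiction. The substantial case is $|x_i|\in(0,1)$ for $i=1,2$.

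In that case equality of images forces $a_1=a_2=:a$ and $b_1=b_2=:b$ (the first coordinate of $i'$ is a function of $a$ and $b$). Writing $a=c^x x$ and $b=c^y y$ with $c^x=(1-t)(1-|x|)(|x|^2-1)+1$ and $c^y=(1-t)(1-|y|)(|y|^2-1)+1$, I would check that $c^x,c^y\in[0,1]$ and that both are strictly positive whenever $|x|\in(0,1)$; this yields $\arg x_1=\arg a=\arg x_2$ and $\arg y_1=\arg b=\arg y_2$. Hence it suffices to recover $\rho:=|x|\in(0,1)$ (equivalently $|y|=1-\rho$) and $t$ from the image: once $x,y,t$ are known, $\theta$ is too, being $(1-|x|^2-|y|^2)/(2xy)$ if $t>0$ and irrelevant (as $t\theta=0$) if $t=0$.

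For the recovery I would set $s=1-t$, $P=\rho(1-\rho)$, $q=sP$, $m=2\rho-1$ and $u=|a|=c^x\rho$, $v=|b|=c^y(1-\rho)$, and compute (by expanding $c^x,c^y$ and using $1-\rho^2=(1-\rho)(1+\rho)$) the identities
\[
u-v=m(1+q),\qquad 1-(u+v)=q\cdot\tfrac{3-m^2}{2}.
\]
Since $u,v\ge0$ and $u+v=1-q(1+2P)\le1$, we get $|u-v|\le u+v\le1$, hence $|m|=|u-v|/(1+q)\le1$. Eliminating $m$ turns the second identity into $g(q)=1-(u+v)$ with $g(q)=\tfrac{3q}{2}-\tfrac{(u-v)^2q}{2(1+q)^2}$ for $q\ge0$; and $g'(q)=\tfrac32-\tfrac{(u-v)^2(1-q)}{2(1+q)^3}\ge\tfrac32-\tfrac12>0$ (using $(u-v)^2\le1$ and $|1-q|/(1+q)^3\le1$ for $q\ge0$), so $g$ is a strictly increasing bijection of $[0,\infty)$ onto $[0,\infty)$. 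Therefore $q$, and then $m=(u-v)/(1+q)$, $\rho=(m+1)/2$ and $t=1-q/(\rho(1-\rho))$, are uniquely determined by the value of $i'$. Applying this to both source points gives $|x_1|=|x_2|$ (hence $x_1=x_2$ and $y_1=y_2$ by the equality of arguments) and $t_1=t_2$, whence $\theta_1=\theta_2$ (or $t_1\theta_1=0=t_2\theta_2$); so the source points coincide, proving injectivity.

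I expect the main difficulty to be spotting the right change of variables: the two displayed identities and the strict monotonicity of $g$ collapse the problem to a one-variable bijection, but they only become visible after substituting $q=s\rho(1-\rho)$ and $m=2\rho-1$. Everything else --- the bounds $c^x,c^y\in[0,1]$, the two identities, and the estimate on $g'$ --- is an elementary computation.
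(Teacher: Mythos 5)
Your proof is correct, and it takes a genuinely different computational route from the paper. Both proofs reduce, after the same preliminary reductions (identity on the boundary, positivity of the scaling factors, recovery of arguments from positive scalar multiples, and $|y|=1-|x|$), to showing that the map $(t,\rho)\mapsto(|a|,|b|)$ with $\rho=|x|\in(0,1)$, $t\in[0,1]$ is injective. The paper does this by cross-multiplying the two expressions for $1-t$ to eliminate $t$, producing an explicit cubic $2x^3+(B-A-3)x^2+(2A-1)x+(1-B)=0$ in $x$, and then counting sign changes at $0$ and $1$ to conclude that the cubic has exactly one root in $(0,1)$. You instead introduce the change of variables $q=(1-t)\rho(1-\rho)$, $m=2\rho-1$, which linearizes the system into the two identities $u-v=m(1+q)$ and $1-(u+v)=q(3-m^2)/2$; eliminating $m$ gives the single scalar equation $g(q)=1-(u+v)$, and you conclude by a derivative estimate showing $g$ is strictly increasing on $[0,\infty)$. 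I checked the two identities and the bound $g'(q)\ge 1$ and they hold. Your monotonicity argument is arguably more transparent about \emph{why} the map is injective, whereas the paper's cubic argument is shorter but relies on the lucky fact that the resultant is a cubic with nice boundary values. Either way, the reduction to a one-variable problem is the crux, and you found a clean one.
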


\begin{proof}
$i'$ restricted to points with $|x|^2+|y|^2=1$ is the identity map, so it is injective here. If $|x|^2+|y|^2<1$ then $|a(t\theta,x,y)|^2+|b(t\theta,x,y)|^2 < 1$, so $i'$ can't take the same value on such a point as on a point of the first type. So it is enough to consider points of the second type. We will prove that $(a,b) : T \big| _G \cap \overline{W'} \rightarrow D^4$ is injective on such points, this will imply our statement. 

For a point $(t\theta,x,y) \in T \big| _G \cap \overline{W'}$ with $|x|^2+|y|^2<1$ (ie.\ $xy \neq 0$), $\theta$ is the argument of $\overline{xy}$, so it is uniquely determined by $x$ and $y$. So what we want to prove is that if $(a,b)(t_1\theta_1,x_1,y_1)=(a,b)(t_2\theta_2,x_2,y_2)$, then $t_1=t_2$, $x_1=x_2$, and $y_1=y_2$. Since $a(t\theta,x,y)$ is a positive scalar multiple of $x$, and $b(t\theta,x,y)$ is a positive scalar multiple of $y$, that equality holds iff $x_2$ and $y_2$ are positive scalar multiples of $x_1$ and $y_1$ respectively and $|a(t_1\theta_1,x_1,y_1)|=|a(t_2\theta_2,x_2,y_2)|$ and $|b(t_1\theta_1,x_1,y_1)|=|b(t_2\theta_2,x_2,y_2)|$. 

Therefore it is enough to prove that the last two equalities imply that $t_1 = t_2$ and $|x_1|=|x_2|$ (and $|y_1|=|y_2|$, but this follows from $|x_1|=|x_2|$, because $|y| = 1 - |x|$ for points of  $T \big| _G \cap \overline{W'}$).

Let $(a^*,b^*) : [0,1] \times (0,1) \rightarrow (0,1) \times (0,1)$, 
\[
(a^*,b^*)(t,x) = \left( ((1-t)(1-x)(x^2-1)+1)x , ((1-t)x((1-x)^2-1)+1)(1-x) \right).
\]
Then $|a(t\theta,x,y)| = a^*(t,|x|)$, and $|b(t\theta,x,y)| = b^*(t,|x|)$. So we need to prove that $(a^*,b^*)$ is injective. (Recall that we are considering points with $|x|^2+|y|^2<1$, so $|x| \in (0,1)$ instead of $[0,1]$.)

Suppose $(a^*,b^*)(t,x) =(A,B)$ for some fixed $(A,B) \in (0,1) \times (0,1)$, then 
\[
(1-t)=\frac{A-x}{(1-x)(x^2-1)x}=\frac{B+x-1}{x(x^2-2x)(1-x)} \tag{$**$}
\]
This implies that $(A-x)(x^2-2x)=(B+x-1)(x^2-1)$, therefore $x$ satisfies the equation $0 = 2x^3+(B-A-3)x^2+(2A-1)x+(1-B)$. The value of this degree-3 polinomial is $(1-B)>0$ in 0 and $(A-1)<0$ in 1, so it has one root in each of the intervals $(-\infty,0)$, $(0,1)$, $(1, \infty)$. Since $x$ is in $(0,1)$, the pair $(A,B)$ uniquely determines $x$, and also $t$ by ($**$). Therefore $(a^*,b^*)$ is injective.
\end{proof}

Let $c, d : \tilde{V} \rightarrow D^2$ and $i'' : \tilde{V} \rightarrow T \big| _{G} \approx D^2 \times D^4$ be defined by the following formulas. (Here $x,y \in D^2$ such that $|x|+|y| \leq 1$.)
\[
\begin{aligned}
c(x,y) &= \left( \frac{1+|y|^2-|x|^2}{2} (|x|^2-1)+1 \right) x \\
d(x,y) &= \left( \frac{1+|x|^2-|y|^2}{2} (|y|^2-1)+1 \right) y \\
i''(x,y) &= 
\begin{cases}
\left( \frac{2\overline{c(x,y)d(x,y)}}{1-|c(x,y)|^2-|d(x,y)|^2}, c(x,y), d(x,y) \right) & \text{if $|x|^2+|y|^2<1$} \\
(0,x,y) & \text{if $|x|^2+|y|^2=1$}
\end{cases}
\end{aligned}
\]

\begin{prop}
The map $i''$ is well-defined (ie.\ independent of $k$).
\end{prop}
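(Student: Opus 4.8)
The plan is to run the same argument as in the proof of Proposition \ref{prop:i'-well-def} for $i'$, the present statement being the specialisation in which the base point is always the zero section of $\S$ over $p$, so the parameter $t$ plays no role and only the fibre coordinates must be tracked. Fix $p\in N$ and two indices $k_1,k_2$ with $p\in U_{k_1}\cap U_{k_2}$. A point $v\in\tilde V$, thought of as a point of the fibre $F_0$ of $\T$ over $p$, is represented in $\psi'_{k_1}$ and in $\psi'_{k_2}$ by pairs $(x_1,y_1)$ and $(x_2,y_2)$; writing $(\alpha,\beta)=\psi_{k_1,k_2}(p)\in U(1)\times U(1)$ and using $\psi'_k(q,(x,y))=H^{-1}(q,\psi_k(\pi(q),(x,y)))$ with $q=\varphi_{k_i}(p,0)=p$, one gets $\psi_{k_1}(p,(x_1,y_1))=\psi_{k_2}(p,(x_2,y_2))$, hence $x_2=\alpha x_1$ and $y_2=\alpha^{-1}\beta y_1$, exactly as in Proposition \ref{prop:i'-well-def}. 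Since $c(x,y)$ is $x$ multiplied by a real scalar depending only on $|x|$ and $|y|$, and likewise $d(x,y)$ is $y$ multiplied by a scalar depending only on $|x|$ and $|y|$, and since $|x_1|=|x_2|$ and $|y_1|=|y_2|$, it follows that $c_2=\alpha c_1$ and $d_2=\alpha^{-1}\beta d_1$, where $c_i=c(x_i,y_i)$, $d_i=d(x_i,y_i)$. (One also checks, as noted after the definition, that $|c_i|\le|x_i|$ and $|d_i|\le|y_i|$, so on $\tilde V$ one has $|c_i|+|d_i|\le|x_i|+|y_i|\le 1$, which places the first coordinate below in $D^2$; moreover the case distinction $|x|^2+|y|^2<1$ versus $=1$ is $(U(1)\times U(1))$-invariant, hence does not depend on $k$.)

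It remains to compare the two computations of $i''(v)$ in the two cases. If $|x|^2+|y|^2=1$ then both descriptions give $i''(v)=\psi'_{k_i}(\varphi_{k_i}(p,0),(x_i,y_i))=v$, so there is nothing to prove. If $|x|^2+|y|^2<1$, set $z_i=\frac{2\overline{c_id_i}}{1-|c_i|^2-|d_i|^2}$; from $c_2=\alpha c_1$ and $d_2=\alpha^{-1}\beta d_1$ we get $\overline{c_2d_2}=\overline{\beta}\,\overline{c_1d_1}$ and $|c_2|^2+|d_2|^2=|c_1|^2+|d_1|^2$, hence $z_2=\overline{\beta}\,z_1$. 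By the first part of Lemma \ref{lem:loctriv}, $\varphi_{k_1,k_2}(p)=\overline{\pr_2(\psi_{k_1,k_2}(p))}=\overline{\beta}$, so $\varphi_{k_1}(p,z_1)=\varphi_{k_2}(p,z_2)=:q$. Then $\psi'_{k_1}(q,(c_1,d_1))=H^{-1}(q,\psi_{k_1}(p,(c_1,d_1)))=H^{-1}(q,\psi_{k_2}(p,(c_2,d_2)))=\psi'_{k_2}(q,(c_2,d_2))$, using that $\psi_{k_1}(p,(c_1,d_1))=\psi_{k_2}(p,(\alpha c_1,\alpha^{-1}\beta d_1))$ since $\psi_{k_1,k_2}(p)=(\alpha,\beta)$ acts on the fibre by $(\alpha,\beta)\cdot(x,y)=(\alpha x,\alpha^{-1}\beta y)$. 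Hence the two values of $i''(v)$ coincide, so $i''$ is well-defined.

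The only step that needs real care is keeping straight that the single transition element $(\alpha,\beta)=\psi_{k_1,k_2}(p)$ controls at once the $\T$-fibre coordinates (through $(x,y)\mapsto(\alpha x,\alpha^{-1}\beta y)$, hence $c\mapsto\alpha c$ and $d\mapsto\alpha^{-1}\beta d$) and the $\S$-base coordinate $z$ (through its conjugate $\overline{\beta}$, by Lemma \ref{lem:loctriv}); granting this, the verification is the same short computation as for $i'$, with $a$, $b$ replaced by $c$, $d$ and with $t=0$.
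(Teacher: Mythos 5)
Your proof is correct and follows essentially the same route as the paper's: take the transition element $(\alpha,\beta)=\psi_{k_1,k_2}(p)$, note that $c$ and $d$ are equivariant (so $c_2=\alpha c_1$, $d_2=\alpha^{-1}\beta d_1$), deduce $z_2=\overline\beta z_1$, and invoke the first part of Lemma \ref{lem:loctriv} to match the base coordinates in $\S$. The paper presents this more tersely as "analogous to Proposition \ref{prop:i'-well-def}", while you spell out the equivariance of $c,d$ and the boundary case; these are helpful elaborations, not a different argument.
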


\begin{proof}
The proof is analogous to that of Proposition \ref{prop:i'-well-def}. Suppose $\psi_{k_1}(p,(x_1,y_1)) = \psi_{k_2}(p,(x_2,y_2))$ for some $k_1$, $k_2$, then we want to prove that $\psi'_{k_1}(\varphi_{k_1}(p,z_1),(c_1,d_1)) = \psi'_{k_2}(\varphi_{k_2}(p,z_2),(c_2,d_2))$, where $c_i = c(x_i,y_i)$, $d_i = d(x_i,y_i)$, and $z_i = \frac{2\overline{c_id_i}}{1-|c_i|^2-|d_i|^2}$ if $|x_i|^2+|y_i|^2<1$ and $z_i = 0$ if $|x_i|^2+|y_i|^2=1$.

Let $(\alpha, \beta) = \psi_{k_1,k_2}(p) \in U(1) \times U(1)$, this means that $x_2 = \alpha x_1$, and $y_2 = \alpha^{-1} \beta y_1$. This implies that $c_2 = \alpha c_1$, and $d_2 = \alpha^{-1} \beta d_1$, so $\psi_{k_1}(p,(c_1,d_1)) = \psi_{k_2}(p,(c_2,d_2))$.

Another consequence is that $z_2 = \overline{\beta} z_1$. By the first part of Lemma \ref{lem:loctriv}, $\varphi_{k_1,k_2}(p) = \overline{\pr_2(\psi_{k_1,k_2}(p))} = \overline{\beta}$, therefore $\varphi_{k_1}(p,z_1) = \varphi_{k_2}(p,z_2)$.
\end{proof}

\begin{prop}
The map $i''$ is continuous.
\end{prop}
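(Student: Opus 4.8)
The plan is to follow the pattern of the continuity proof for $i'$. Away from the set $\{|x|^2+|y|^2=1\}$ there is nothing to do: $c$ and $d$ are polynomials in $|x|^2,|y|^2$ times $x$, resp.\ $y$, hence smooth, and since the scalar coefficients occurring in $c$ and $d$ lie in $[0,1]$ one has $|c(x,y)|\le|x|$ and $|d(x,y)|\le|y|$, so $|c|^2+|d|^2\le|x|^2+|y|^2<1$ and the denominator $1-|c|^2-|d|^2$ is smooth and strictly positive there; thus $i''$ is smooth, in particular continuous, on $\{(x,y)\in\tilde V\mid|x|^2+|y|^2<1\}$. On $\tilde V$ the inequality $|x|+|y|\le1$ gives $|x|^2+|y|^2\le(|x|+|y|)^2\le1$, with equality only if $|x||y|=0$, so the points where continuity still has to be checked are exactly $(x_0,0)$ with $|x_0|=1$ and $(0,y_0)$ with $|y_0|=1$. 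The map $(x,y)\mapsto(y,x)$ of $\tilde V$ interchanges $c$ and $d$ and fixes the first coordinate of $i''$ (because $\overline{cd}=\overline{dc}$), so it intertwines $i''$ with the homeomorphism $(z,x,y)\mapsto(z,y,x)$ of $D^2\times D^4$; hence $i''$ is continuous at $(0,y_0)$ iff it is continuous at $(y_0,0)$, and it suffices to treat a point $(x_0,0)$ with $|x_0|=1$.

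So fix such an $x_0$. As $(x,y)\to(x_0,0)$ the continuity of $c,d$ together with $c(x_0,0)=x_0$, $d(x_0,0)=0$ shows that the last two coordinates of $i''(x,y)$ converge to those of $i''(x_0,0)=(0,x_0,0)$, so the whole content is the $0/0$ limit
\[
\frac{2\,\overline{c(x,y)\,d(x,y)}}{1-|c(x,y)|^2-|d(x,y)|^2}\;\longrightarrow\;0 ,\qquad (x,y)\to(x_0,0)\ \text{in}\ \tilde V .
\]
I would handle this with elementary estimates in $r=|x|$, $s=|y|$, $u=1-r^2$, using the defining inequality of $\tilde V$ in the form $s\le1-r\le1-r^2=u$. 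From $|c|\le r$ and $|d|\le s\le u$ one gets the lower bound $1-|c|^2-|d|^2\ge u-s^2\ge u-u^2=u(1-u)$, of exact order $u$. For the numerator the key observation is that the coefficient $b(|x|,|y|)$ of $y$ in $d$ equals $\frac12\bigl(u+3s^2-us^2-s^4\bigr)$, which is $O(u)$ because $s\le u$; hence $|d|=b(|x|,|y|)\,s=O(u^2)$ and $2|c||d|=O(u^2)$. Therefore
\[
\Bigl|\frac{2\,\overline{cd}}{1-|c|^2-|d|^2}\Bigr|=\frac{2|c||d|}{1-|c|^2-|d|^2}=O(u) ,
\]
which tends to $0$ as $u\to0$; this proves the claim at $(x_0,0)$, and the case $(0,y_0)$ follows by the symmetry above.

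The only real obstacle is the estimate in the middle paragraph, and there the subtlety is to choose the small parameter correctly: with $u=1-r^2$ the constraint $|x|+|y|\le1$ becomes $s\le u$, the denominator is visibly of first order in $u$ and the numerator of second order, whereas with a clumsier choice (such as $1-r$) the orders of vanishing are harder to read off. Everything else --- the smoothness and positivity of the denominator off the boundary, and the reduction from the two boundary circles to one via $(x,y)\mapsto(y,x)$ --- is routine and exactly parallels the preceding proposition on $i'$.
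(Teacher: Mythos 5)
Your proof is correct, and it takes a genuinely different route from the paper's. The paper handles the $0/0$ limit at $(x_0,0)$ by factoring $1-|c|^2$ as $(1-|c|)(1+|c|)$, dividing through by $\bar y\cdot(\text{coefficient of }y)$, and then proving by a direct polynomial positivity computation that the resulting ratio $\frac{1-|x|(\cdots)}{(\cdots)}$ is at least $1$, so that multiplying by $1/\bar y$ sends the denominator to infinity. You instead work with the variable $u=1-|x|^2$, observe that the constraint $|x|+|y|\le 1$ gives $|y|\le 1-|x|\le u$, and then show by an explicit expansion that the scalar factor in $d$ equals $\frac12(u+3|y|^2-u|y|^2-|y|^4)=O(u)$, hence $|d|=O(u^2)$ and the numerator is $O(u^2)$, while the denominator is bounded below by $1-|x|^2-|y|^2\ge u-u^2=u(1-u)$, giving the quotient $=O(u)\to 0$. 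Your estimate and the paper's inequality are verifying comparable algebraic facts, but yours isolates the relevant vanishing orders more transparently and avoids the paper's somewhat delicate nested-fraction rewriting (which in the printed version even contains what appears to be a sign typo in the factor $1+|x|(\frac{1+|y|^2+|x|^2}{2}(|x|^2-1)+1)$). Your use of the swap $(x,y)\mapsto(y,x)$ to reduce the two boundary circles to one is a small tidiness gain over the paper's ``points of the other type can be handled similarly,'' and it correctly rests on the symmetry $c(y,x)=d(x,y)$, $d(y,x)=c(x,y)$, $\overline{cd}=\overline{dc}$.
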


\begin{proof}
We need to prove this at points $(x,y)$, where $|x|^2+|y|^2=1$. The maps $c$ and $d$ are everywhere continuous, and at such points $c(x,y)=x$ and $d(x,y)=y$.

Now fix some $x_0$ of absolute value 1, we will consider the point $(x_0,0)$. We have to prove that the following expression tends to 0 as $(x,y) \rightarrow (x_0,0)$:
\[
\!\!\!\!\!\!\!\!\!\!\!\!\!\!\!\!\!\frac{2\overline{c(x,y)d(x,y)}}{1-|c(x,y)|^2-|d(x,y)|^2} = \frac{2\overline{xy}\left( \frac{1+|y|^2-|x|^2}{2} (|x|^2-1)+1 \right) \left( \frac{1+|x|^2-|y|^2}{2} (|y|^2-1)+1 \right) }{1-|x|^2\left( \frac{1+|y|^2-|x|^2}{2} (|x|^2-1)+1 \right)^2-y\bar{y}\left( \frac{1+|x|^2-|y|^2}{2} (|y|^2-1)+1 \right)^2} = 
\]
\[
\!\!\!\!\!\!\!\!\!\!\!\!\!\!\!\!\!= \frac{2\bar{x}\left( \frac{1+|y|^2-|x|^2}{2} (|x|^2-1)+1 \right)} {\frac{1-|x|\left( \frac{1+|y|^2-|x|^2}{2} (|x|^2-1)+1 \right)}{\bar{y}\left( \frac{1+|x|^2-|y|^2}{2} (|y|^2-1)+1 \right)} \left(1+|x|\left( \frac{1+|y|^2+|x|^2}{2} (|x|^2-1)+1 \right)\right) - y\left( \frac{1+|x|^2-|y|^2}{2} (|y|^2-1)+1 \right)}
\]
\quad

$2\bar{x}\left( \frac{1+|y|^2-|x|^2}{2} (|x|^2-1)+1 \right) \rightarrow 2\overline{x_0}$ and $\left(1+|x|\left( \frac{1+|y|^2+|x|^2}{2} (|x|^2-1)+1 \right)\right) \rightarrow \nolinebreak 2$ \break and $y\left( \frac{1+|x|^2-|y|^2}{2} (|y|^2-1)+1 \right) \rightarrow 0$ hold trivially. 

$\frac{1-|x|\left( \frac{1+|y|^2-|x|^2}{2} (|x|^2-1)+1 \right)}{\left( \frac{1+|x|^2-|y|^2}{2} (|y|^2-1)+1 \right)}$ is at least 1, because $1-|x|\left( \frac{1+|y|^2-|x|^2}{2} (|x|^2-1)+1 \right) \geq \left( \frac{1+|x|^2-|y|^2}{2} (|y|^2-1)+1 \right)$, because $2-|x|\bigl( (1+|y|^2-|x|^2) (|x|^2-1)+2 \bigr) - \bigl( (1+ \break |x|^2 -|y|^2) (|y|^2-1)+2 \bigr)$ $=$ $1-|x|+|x|^2-2|y|^2-2|x|^3+|x||y|^2-|x|^2|y|^2+|y|^4 + \break |x|^5- |x|^3|y|^2$ $=$ $(1-|x|-|y|^2)(1+|x|^2+|x|^3-|y|^2)-2|x|^3+|x|^4+|x|^5$ $\geq$ $(1- \break |x|-(1-|x|)^2)(1+|x|^2+|x|^3-(1-|x|)^2)-2|x|^3+|x|^4+|x|^5 = (|x|-|x|^2)(2|x|+|x|^3)-2|x|^3+|x|^4+|x|^5=2|x|^2-4|x|^3+2|x|^4=2(|x|-|x|^2)^2 \geq 0$. (For the first in\-equality we used that $|x| + |y| \leq 1$ for all $(x,y) \in \tilde{V}$, therefore the left-hand side is a decreasing function of $|y|$, and $|y| \leq 1 - |x|$.)

This implies that the absolute value of the factor $\frac{1-|x|\left( \frac{1+|y|^2-|x|^2}{2} (|x|^2-1)+1 \right)}{\bar{y}\left( \frac{1+|x|^2-|y|^2}{2} (|y|^2-1)+1 \right)}$ tends to infinity (because this is true for $\frac{1}{\overline{y}}$). This proves our statement.
\end{proof}

\begin{prop}
The map $i''$ is injective.
\end{prop}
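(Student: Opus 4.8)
The plan is to imitate the proof that $i'$ is injective. First I would separate the two strata of $\tilde V$. On $\{(x,y)\in\tilde V\mid |x|^2+|y|^2=1\}$, which is exactly $S^1\times\{0\}\sqcup\{0\}\times S^1$, the map $i''$ is $(x,y)\mapsto(0,x,y)$ and hence injective; moreover, since the two scalar factors defining $c$ and $d$ lie in $[0,1]$, one has $|c(x,y)|\le|x|$ and $|d(x,y)|\le|y|$, so on $\{|x|^2+|y|^2<1\}$ the image of $i''$ satisfies $|c|^2+|d|^2<1$, disjoint from the image of the first stratum. So it suffices to prove $i''$ is injective on $\{(x,y)\in\tilde V\mid |x|^2+|y|^2<1\}$. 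There the first coordinate of $i''$ is a function of $(c(x,y),d(x,y))$, so it is enough that $(c,d)$ be injective there; and since $c$ is a nonnegative real multiple of $x$ (vanishing only when $x=0$ on this set) and $d$ a nonnegative real multiple of $y$ (vanishing only when $y=0$), an equality $(c,d)(x_1,y_1)=(c,d)(x_2,y_2)$ forces $\arg x_1=\arg x_2$, $\arg y_1=\arg y_2$ (where defined) and $|c|,|d|$ to coincide. So the statement reduces to injectivity of the map $(a^*,b^*)$ on the triangle $\Delta=\{(s,r)\mid s,r\ge0,\ s+r\le1\}$, where $a^*(s,r)=\bigl(\tfrac{(1+r^2-s^2)(s^2-1)}{2}+1\bigr)s$ and $b^*(s,r)=a^*(r,s)$, evaluated at $(s,r)=(|x|,|y|)$. (On the edges $s=0$ and $r=0$, where $a^*$ respectively $b^*$ vanishes identically, injectivity follows directly from strict monotonicity of $b^*(0,\cdot)$ respectively $a^*(\cdot,0)$ on $[0,1)$.)

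The technical core is that the Jacobian determinant $J$ of $(a^*,b^*)$ is positive on $\interior\Delta$. Here I would compute $\partial a^*/\partial s=\tfrac12\bigl((1-r^2)+s^2(6+3r^2-5s^2)\bigr)\ge\tfrac{1-r^2}{2}$ (using $s<1$) and $\partial a^*/\partial r=rs(s^2-1)\le0$, and by the symmetry $b^*(s,r)=a^*(r,s)$, $\partial b^*/\partial r\ge\tfrac{1-s^2}{2}$ and $\partial b^*/\partial s=rs(r^2-1)\le0$. Therefore $J=\partial_sa^*\,\partial_rb^*-\partial_ra^*\,\partial_sb^*\ge\tfrac{(1-r^2)(1-s^2)}{4}-r^2s^2(1-s^2)(1-r^2)=(1-r^2)(1-s^2)\bigl(\tfrac14-r^2s^2\bigr)$, which is strictly positive on $\interior\Delta$ because $rs\le\bigl(\tfrac{r+s}{2}\bigr)^2<\tfrac14$ there.

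Finally I would promote positivity of $J$ to global injectivity. Since $a^*(\cdot,r)$ is strictly increasing on $[0,1-r]$, for each admissible value $A$ there is a unique $s=\alpha(A,r)$ with $a^*(\alpha(A,r),r)=A$, and by the implicit function theorem $\frac{d}{dr}b^*(\alpha(A,r),r)=J/\partial_sa^*>0$, so $r\mapsto b^*(\alpha(A,r),r)$ is strictly increasing on its domain. One checks that $a^*(1-r,r)=(1-r)^2+(1-r)^3-(1-r)^4$ is decreasing in $r$; this guarantees that if $(a^*,b^*)(s_1,r_1)=(a^*,b^*)(s_2,r_2)=(A,B)$ with $r_1\le r_2$, then $\alpha(A,r)$ is defined for every $r\in[r_1,r_2]$, so $b^*(\alpha(A,r_1),r_1)=B=b^*(\alpha(A,r_2),r_2)$ forces $r_1=r_2$, hence $s_1=\alpha(A,r_1)=\alpha(A,r_2)=s_2$. (When $A=0$, so $s_1=s_2=0$, one falls back on strict monotonicity of $b^*(0,\cdot)$.)

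I expect the positivity of the Jacobian to be the main obstacle: it genuinely does not follow from monotonicity alone, since $\partial_sa^*$ and $\partial_ra^*$ have opposite signs (and similarly for $b^*$), so one really needs the estimates above together with the bound $rs\le\tfrac14$ valid on $\Delta$. The rest is bookkeeping of exactly the sort already carried out for $i'$.
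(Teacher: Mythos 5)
Your proof is correct, but it takes a genuinely different route from the paper's. Both proofs reduce to injectivity of the planar map $(s,r)\mapsto(c^*(s,r),d^*(s,r))$ on the triangle $\{s,r\ge 0,\ s+r\le 1\}$ (you rename this $(a^*,b^*)$, which clashes with the paper's use of $(a^*,b^*)$ for the analogous map in the $i'$-argument, but that is cosmetic). At that point the paper observes that the \emph{symmetric part} $D^S$ of the Jacobian matrix is positive semidefinite, and positive definite away from the two corners, and concludes injectivity from the standard fact that a map with positive-definite symmetrized Jacobian on a convex domain is injective, via $\bigl\langle\Delta,(c^*,d^*)(p_2)-(c^*,d^*)(p_1)\bigr\rangle=\int_0^1\bigl\langle\Delta,D(\gamma(t))\Delta\bigr\rangle\,\mathrm{d}t>0$. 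You instead prove positivity of the Jacobian \emph{determinant} $J$ via the clean AM--GM bound $rs\le\tfrac14$ on the triangle (noticeably less algebra than the paper's explicit expansion of $\det D^S$), and then run a level-set argument: fix the target first coordinate $A$, solve $a^*(\alpha(A,r),r)=A$ uniquely using $\partial_s a^*>0$, and show $r\mapsto b^*(\alpha(A,r),r)$ is strictly increasing since its derivative is $J/\partial_s a^*$. Since positivity of $J$ alone does not give global injectivity, the extra step you supply --- strict monotonicity of $a^*(1-r,r)$ in $r$, which keeps the level curve inside the triangle and its $r$-domain connected --- is genuinely needed; the paper trades that bookkeeping for the heavier determinant computation. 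One small remark: your bound $J\ge(1-r^2)(1-s^2)\bigl(\tfrac14-r^2s^2\bigr)$ is in fact strictly positive on the entire triangle minus the two corners, not merely on $\interior\Delta$ as you state, since $rs\le\tfrac14$ holds up to the hypotenuse; noting this would simplify the endpoint considerations in your final step.
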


\begin{proof}
We will prove that $(c,d) : \tilde{V} \rightarrow D^4$ is injective, this implies our statement. Since $c(x,y)$ is a positive scalar multiple of $x$ (actually the coefficient of $x$ is non-negative but it can be 0 only if $x=0$), and $d(x,y)$ is a positive scalar multiple of $y$, $(c,d)(x_1,y_1)=(c,d)(x_2,y_2)$ holds iff $x_2$ and $y_2$ are positive scalar multiples of $x_1$ and $y_1$ respectively, and $|c(x_1,y_1)|=|c(x_2,y_2)|$ and $|d(x_1,y_1)|=|d(x_2,y_2)|$. Therefore it is enough to prove that if the last two equations hold, then $|x_1|=|x_2|$ and $|y_1|=|y_2|$.

Let $(c^*,d^*) : \{ (x,y) \in [0,1] \times [0,1] \mid x+y \leq 1 \} \rightarrow \R^2$, 
\[
(c^*,d^*)(x,y) = \left( \left( \frac{1+y^2-x^2}{2} (x^2-1)+1 \right) x , \left( \frac{1+x^2-y^2}{2} (y^2-1)+1 \right) y \right).
\]
Then $|c(x,y)|= c^*(|x|,|y|)$, and $|d(x,y)|= d^*(|x|,|y|)$, so what we need to prove is that $(c^*,d^*)$ is injective. Let
\[
D = \mathrm{d}(c^*,d^*) = 
\begin{bmatrix} 
\frac{1+6x^2-5x^4+(3x^2-1)y^2}{2} & x(x^2-1)y \\
y(y^2-1)x & \frac{1+6y^2-5y^4+(3y^2-1)x^2}{2}
\end{bmatrix}
\]
Let $D^S = \frac{D+D^T}{2}$ be the symmetric part of $D$. Its $(1,1)$-entry is $\frac{6x^2-5x^4}{2}+\frac{1+(3x^2-1)y^2}{2} \geq 0$, and it is $0$ iff $x=0$, $y=1$. Its determinant is $\frac{1}{4} \bigl( 1+6x^2-5x^4+(3x^2-1)y^2 \bigr) \bigl( 1+6y^2-5y^4+(3y^2-1)x^2 \bigr) - x^2y^2 \big( \frac{x^2+y^2}{2}-1 \big) ^2 = \frac{1}{4} \bigl( 1 + 5 x^2 + 5 y^2 - 11 x^4 + 39 x^2 y^2 - \break  11 y^4 + 5 x^6 - 11 x^4 y^2 - 11 x^2 y^4 + 5 y^6 - 16 x^6 y^2 + 32 x^4 y^4 - 16 x^2 y^6 \bigr) = \frac{1}{4} \bigl( (1 - x^4 - y^4) \break + 5(x - x^3)^2 + 5(y - y^3)^2 + 11x^2y^2(1-x^2-y^2) + 16x^2y^2(1-x^4-y^4) + 12x^2y^2 + \break 32x^4y^4 \bigr) \geq 0$, and it is $0$ if $x=1$, $y=0$ or  $x=0$, $y=1$. So $D^S$, and therefore $D$ too, are positive definite in each point except these two.

Suppose $(x_1,y_1) \neq (x_2,y_2)$ are two points in the domain of $(c^*,d^*)$, let $\Delta = (x_2,y_2) - (x_1,y_1)$ be their difference, and $\gamma : [0,1] \rightarrow \R^2$, $\gamma(t) = (x_1,y_1) + t\Delta$ be the line segment between them (it is in the domain of $(c^*,d^*)$, because this domain is convex). $(c^*,d^*) \circ \gamma$ is a path between the images of the points, so $\int_0^1 ((c^*,d^*)(\gamma(t)))'\mathrm{d}t = (c^*,d^*)(x_2,y_2) - (c^*,d^*)(x_1,y_1)$. The integrand is $D(\gamma(t))\Delta$, so $\left< \Delta, (c^*,d^*)(x_2,y_2) - \right. \allowbreak \left. (c^*,d^*)(x_1,y_1) \right> = \left< \Delta, \int_0^1 D(\gamma(t))\Delta \mathrm{d}t \right> =  \int_0^1 \left< \Delta, D(\gamma(t))\Delta \right> \mathrm{d}t$, which is positive, be\-cause the matrix $D(\gamma(t))$ is positive definite for all but at most two values of $t$. Therefore $(c^*,d^*)(x_2,y_2) - (c^*,d^*)(x_1,y_1) \neq 0$, so $(c^*,d^*)$ is injective.
\end{proof}

Both of $i'$ and $i''$ are defined on $\left( T \big| _G \cap \overline{W'} \right) \cap \tilde{V} = \partial \tilde{V}$, and they coincide, because if $(x,y) \in \partial \tilde{V}$ (ie.\ $|x|+|y| = 1$), then $a(0,x,y) = \left( (1-|x|)(|x|^2-1)+1 \right) x = \break \left( \frac{1+|y|^2-|x|^2}{2} (|x|^2-1)+1 \right) x = c(x,y)$, similarly $b(0,x,y) = d(x,y)$, hence $i'(0,x,y)=i''(x,y)$. Therefore $i'$ and $i''$ together form a continuous map $i : T \big| _G \cap W \rightarrow T \big| _{G}$\,. This $i$ is well-defined (does not depend on the choice of a local trivialization), because this is true for $i'$ and $i''$.

\begin{prop}
The map $i$ is identical on the boundary, and the image of $i$ is $T \big| _G \cap W''_0$.
\end{prop}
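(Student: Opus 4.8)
The statement has two halves — that $i$ is the identity on $\partial(T\big|_G\cap W)=B\cap T\big|_G$, and that $i(T\big|_G\cap W)=T\big|_G\cap W''_0$ — and I would dispose of the first half together with one inclusion of the second right away. In the coordinates of Lemma~\ref{lem:loctriv} a point of $B\cap T\big|_G$ has the form $(z,x,y)$ with $|x|=1,\,y=0$ or $x=0,\,|y|=1$, hence with $|x|^2+|y|^2=1$; on the $\overline{W'}$-part such a point falls in the branch $i'(t\theta,x,y)=(t\theta,x,y)$ and on $\tilde V$ in the branch $i''(x,y)=(0,x,y)$, both of which are the identity there (the $\tilde V$-point has $z$-coordinate $0$), and the two branches agree on the overlap; so $i=\id$ on $B\cap T\big|_G$. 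For the inclusion $i(T\big|_G\cap W)\subseteq W''_0$: at a boundary point $i(q)=q=(z,x,y)$ with $xy=0=1-|x|^2-|y|^2$, so the defining relation $2xy=\bar z(1-|x|^2-|y|^2)$ of $W''_0$ holds trivially, while at an interior point $i(q)=(z,a,b)$ with $z$ defined as $2\overline{ab}/(1-|a|^2-|b|^2)$ (from $i'$ or $i''$; here $|a|\le|x|$, $|b|\le|y|$ keep the denominator positive), so $2ab=\bar z(1-|a|^2-|b|^2)$.

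The real content is the inclusion $T\big|_G\cap W''_0\subseteq i(T\big|_G\cap W)$, and my plan is to reduce it to a statement about the two explicit real maps $(a^*,b^*)$ and $(c^*,d^*)$ of the preceding propositions. Project away the first coordinate: $\Phi\colon T\big|_G\cap W''_0\to D^4$, $(z,x,y)\mapsto(x,y)$. Straight from the equation $2xy=\bar z(1-|x|^2-|y|^2)$ one reads off that $\Phi$ is onto $\tilde V=\{|x|+|y|\le1\}$, that over $\{|x|^2+|y|^2<1\}$ it is injective (there $\bar z$ is forced to equal $2xy/(1-|x|^2-|y|^2)$), and that $\Phi^{-1}$ of $\{|x|^2+|y|^2=1\}$ is exactly $B\cap T\big|_G$ (there $xy=0$ and $z$ is unconstrained). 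Since $i$ fixes $B\cap T\big|_G$, that set already lies in the image; and by injectivity of $\Phi$ over $\{|x|^2+|y|^2<1\}$, it suffices to show that $\Phi(i(T\big|_G\cap W))$ contains every $(x,y)$ with $|x|+|y|\le1$ and $|x|^2+|y|^2<1$. Away from $\{|x|^2+|y|^2=1\}$, $\Phi\circ i$ is $(a,b)$ on the $\overline{W'}$-part and $(c,d)$ on $\tilde V$; since $a,c$ (respectively $b,d$) are nonnegative real multiples of the first (respectively second) fibre coordinate, and the domain contains points with arbitrary prescribed arguments of the two coordinates (for the $\overline{W'}$-part $\theta$ is then determined, for $\tilde V$ there is no constraint), one checks that $\Phi(i(T\big|_G\cap W))\cap\{|x|^2+|y|^2<1\}$ is precisely the set of $(x,y)$ with $|x|^2+|y|^2<1$ and $(|x|,|y|)$ in the image of $(a^*,b^*)$ or of $(c^*,d^*)$. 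So the whole inclusion reduces to the real statement
\[
\mathrm{im}(a^*,b^*)\cup\mathrm{im}(c^*,d^*)\ \supseteq\ T_1:=\{(u,v)\in\R^2:\ u,v\ge0,\ u+v\le1\}.
\]

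This region-covering statement is the step I expect to be the main obstacle. The maps $(a^*,b^*)$ and $(c^*,d^*)$ are proper continuous injections of $2$-disks — for $(a^*,b^*)$ one first collapses the edges $x=0$ and $x=1$, which the map already sends to $(0,1)$ and $(1,0)$ respectively — hence homeomorphisms onto closed topological disks carrying boundary circle to topological boundary. By the Jordan--Schoenflies theorem, $\mathrm{im}(c^*,d^*)$ is the closed region $R_2$ bounded by the segments from the origin to $(0,1)$ and to $(1,0)$ and by the arc $\Gamma:=(c^*,d^*)(\{x+y=1\})$ from $(1,0)$ to $(0,1)$; a single evaluation (e.g.\ $(c^*,d^*)(\frac12,\frac12)=(\frac5{16},\frac5{16})$) shows $\Gamma$ lies in the open triangle except at its endpoints, so $R_2$ is the part of $T_1$ below $\Gamma$. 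From $(a^*,b^*)(0,x)=(c^*,d^*)(x,1-x)$ and $(a^*,b^*)(1,x)=(x,1-x)$, the same argument identifies $\mathrm{im}(a^*,b^*)$ with the region $R_1$ bounded by $\Gamma$ and the hypotenuse $\{u+v=1\}$; hence $R_1\cup R_2=T_1$, completing the proof. If one prefers to avoid Jordan--Schoenflies — and is willing to first check that the glued map $i$ is injective — there is an alternative: $i$ is then a continuous injection of the compact connected $4$-manifold-with-boundary $T\big|_G\cap W$ into the connected $4$-manifold-with-boundary $T\big|_G\cap W''_0$ (a manifold, being homeomorphic to the smooth $W''_2$) that restricts to the identity on their common boundary, so by invariance of domain its image is open in the interior and is also closed, hence all of $T\big|_G\cap W''_0$.
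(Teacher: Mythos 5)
Your identification $\partial(T\big|_G\cap W)=B\cap T\big|_G$ is wrong, and this creates a real gap in the first half. The space $T\big|_G\cap W$ also has boundary over $\partial G$, namely the points $(t\theta,x,y)\in T\big|_G\cap\overline{W'}$ with $t=1$ (where $|x|^2+|y|^2<1$). You never check that $i$ is the identity there, and it is not automatic from your formulas: at such a point $i'(\theta,x,y)=\bigl(\tfrac{2\overline{ab}}{1-|a|^2-|b|^2},a,b\bigr)$, and one has to observe that $a(\theta,x,y)=x$, $b(\theta,x,y)=y$ and that the defining relation $2x\theta y=1-|x|^2-|y|^2$ makes the first coordinate equal to $\theta$, so the point is fixed. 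This piece of the boundary matters precisely because the global homeomorphism $I$ is to be glued with the identity of $W\setminus T\big|_S$ along $W\cap T\big|_{\partial S}$; identity on $B\cap T\big|_G$ alone is not enough. Your inclusion argument into $W''_0$ does incidentally cover those points, but the claim to be proved there is equality with the identity, not membership.

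The surjectivity half is where your argument genuinely diverges from the paper, and your route is correct. You reduce, via the fibrewise projection $\Phi$, to the planar statement $\mathrm{im}(a^*,b^*)\cup\mathrm{im}(c^*,d^*)\supseteq T_1$, then identify the two images as the two Jordan regions into which the arc $\Gamma=(a^*,b^*)(\{t=0\})=(c^*,d^*)(\{x+y=1\})$ cuts the triangle, using the already-proved injectivity of $(a^*,b^*)$ and $(c^*,d^*)$ and Jordan--Schoenflies. This is explicit and elementary but dimension-specific and fairly computational. The paper instead argues by contradiction with a short homological degree computation (the generator of $H_3(\partial P_2)$ would have to die in $H_3(P_2\setminus\{q\})$ if some interior $q$ were omitted, but it does not), which needs no picture of the images, no Schoenflies, and does not require knowing $i$ is injective. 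Your alternative at the end (invariance of domain plus closed image) is closer in spirit to the paper but, as you note, presupposes the injectivity of $i$, which in the paper's ordering is established only in the subsequent proposition.
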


\begin{proof}
First we prove that $i$ is identical on the boundary. There are no boundary points in $\interior \tilde{V}$. The point $(t\theta,x,y) \in T \big| _G \cap \overline{W'}$ is a boundary point iff $|x|^2+|y|^2=1$ or $t=1$. In the first case $i(t\theta,x,y) = i'(t\theta,x,y)=(t\theta,x,y)$ by definition. In the second case $a(\theta,x,y) = x$, $b(\theta,x,y) = y$, and $i(\theta,x,y) = i'(\theta,x,y) = \bigl( \frac{2\overline{xy}}{1-|x|^2-|y|^2},x,y \bigr) =(\theta,x,y)$, because for these points $2x\theta y = 1-|x|^2-|y|^2$, hence $\frac{2xy}{1-|x|^2-|y|^2} = \bar{\theta}$. This proves the first statement.

Next we prove that $i$ maps into $W''_0$. Since the boundary of $T \big| _G \cap W''_0$ coincides with that of $T \big| _G \cap W$, we only need to check this for interior points (so $|x|^2+|y|^2<1$ here). By definition $W''_0$ contains the points $(z,x,y)$ with  $2xy = \bar{z}(1-|x|^2 - |y|^2)$, which is equivalent to $z = \frac{2\overline{xy}}{1-|x|^2-|y|^2}$. Both of $i'(t\theta,x,y)$ and $i''(x,y)$ satisfy this condition.

Finally we prove that $i$ is surjective onto $T \big| _G \cap W''_0$. Let $T \big| _G \cap W = P_1$ and $T \big| _G \cap W''_0 = P_2$, these are 4-manifolds with boundary. \big[In Case 1 $\dim P_1 = \dim P_2 = 2$, so all dimensions are shifted by $(-2)$.\big] Assume indirectly that $q \in \interior P_2$ is not in the image of $i$. From the exact sequence of the triple $(P_2 , P_2 \setminus \{ q \} , \partial P_2)$ (using that $P_2$ is connected) we see that $H_4(P_2 \setminus \{ q \} , \partial P_2) = 0$. In the exact sequence of the pair $(P_2 \setminus \{ q \} , \partial P_2)$ this implies that the homomorphism $H_3(\partial P_2) \rightarrow H_3(P_2 \setminus \{ q \})$ is injective, so the image of the generator of $H_3(\partial P_2) \cong \Z$ is non-zero.

On the other hand the homomorphism $H_3(\partial P_1) \rightarrow H_3(P_1)$ induced by the inclusion maps the generator to 0. Since $i$ is identical on the boundary, $i_* : H_*(\partial P_1) \rightarrow H_*(\partial P_2)$ is an isomorphism. So the commutativity of the following diagram implies that the homomorphism $H_3(\partial P_2) \rightarrow H_3(i(P_1))$ (which is induced by the inclusion) maps the generator to 0.
\[
\begin{CD}
H_3(\partial P_1) @>>> H_3(P_1) \\
@VVi_*V @VVi_*V \\
H_3(\partial P_2) @>>> H_3(i(P_1))
\end{CD}
\]
Since $q \not\in i(P_1)$, the homomorphism $H_3(\partial P_2) \rightarrow H_3(P_2 \setminus \{ q \})$ is in fact a composition $H_3(\partial P_2) \rightarrow H_3(i(P_1)) \rightarrow H_3(P_2 \setminus \{ q \})$, therefore it maps the generator to 0. We have obtained a contradiction, this proves that $i$ must be surjective.
\end{proof}

\begin{prop}
The map $i$ is injective.
\end{prop}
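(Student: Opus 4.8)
The plan is to exploit that $i$ is assembled from the two maps $i'$ (on $T\big|_G\cap\overline{W'}$) and $i''$ (on $\tilde V$), both of which have already been shown to be injective and to agree on their common domain $\bigl(T\big|_G\cap\overline{W'}\bigr)\cap\tilde V=\partial\tilde V$. A routine case-check reduces the injectivity of $i$ to a single situation. Indeed, if $i(p)=i(p')$ and $p,p'$ lie in the same one of the two domains, injectivity of $i'$ (resp.\ $i''$) gives $p=p'$; if one of them lies in $\partial\tilde V$, it lies in both domains and the same argument applies. So it remains to rule out a pair $p_1\in(T\big|_G\cap\overline{W'})\setminus\partial\tilde V$ (so $p_1$ lies in the fibre over some $g=t\theta\in G$ with $t>0$) and $p_2\in\interior\tilde V$ with $i'(p_1)=i''(p_2)$.

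Suppose such a pair exists; write $(x_i,y_i)$ for the fibre coordinates of $p_i$. If $|x_1|^2+|y_1|^2=1$ then $i'(p_1)=(t\theta,x_1,y_1)$, whose last two coordinates have square-norm $1$, while $i''(p_2)$ has last two coordinates of square-norm $<1$ (since $|c(x_2,y_2)|\le|x_2|$, $|d(x_2,y_2)|\le|y_2|$ and $|x_2|^2+|y_2|^2<1$); contradiction. Hence $|x_1|^2+|y_1|^2<1$ and $x_1y_1\neq0$, so $i'(p_1)=(z_1,a_1,b_1)$ and $i''(p_2)=(z_2,c_2,d_2)$ are both of the ``interior'' form, with $z_1,z_2$ determined by the remaining coordinates. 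Since $a(t\theta,x_1,y_1)$ is a positive real multiple of $x_1$ and $c(x_2,y_2)$ a positive real multiple of $x_2$ (similarly for $b,d$ and $y_1,y_2$), the equation $i'(p_1)=i''(p_2)$ forces $\arg x_1=\arg x_2$, $\arg y_1=\arg y_2$, and collapses to the two modulus equations $a^*(t,|x_1|)=c^*(|x_2|,|y_2|)$ and $b^*(t,|x_1|)=d^*(|x_2|,|y_2|)$, where $a^*,b^*,c^*,d^*$ are the functions introduced in the proofs that $i'$ and $i''$ are injective, subject to $t\in(0,1]$, $|x_1|+|y_1|=1$ (which holds at every point of $\overline{W'}$), and $|x_2|+|y_2|<1$.

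The final step — showing this system has no solution — is the main obstacle, and I would handle it by a planar separation argument. The arc $\Gamma=\{(c^*(p,1-p),\,d^*(p,1-p)):p\in[0,1]\}$ is the common image of the seam $\partial\tilde V$ under $(a^*,b^*)\big|_{t=0}$ and under $(c^*,d^*)\big|_{\{|x|+|y|=1\}}$ — this is exactly the identity verified when checking that $i'$ and $i''$ agree on $\partial\tilde V$ — it runs from $(0,1)$ to $(1,0)$, and it is embedded because $(c^*,d^*)$ is injective. From $c^*(p,1-p)<p$ and $d^*(p,1-p)<1-p$ for $p\in(0,1)$ one gets that $\Gamma$ lies strictly below the segment $\{\alpha+\beta=1\}$ and meets it and the coordinate axes only at its endpoints; hence $\Gamma$ together with that segment bounds a lens $R_1$, $\Gamma$ together with the two coordinate segments bounds a region $R_2$ containing the origin, and $\overline{R_1}\cap\overline{R_2}=\Gamma$. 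Now $a^*,b^*$ are strictly increasing in $t$, approaching a point of $\Gamma$ as $t\to0^+$ and taking the value $(p,1-p)\in\{\alpha+\beta=1\}$ at $t=1$, so $(a^*(t,|x_1|),b^*(t,|x_1|))\in\overline{R_1}\setminus\Gamma$ for $t\in(0,1]$; whereas $c^*\le|x_2|$, $d^*\le|y_2|$ with $|x_2|+|y_2|<1$ put $(c^*(|x_2|,|y_2|),d^*(|x_2|,|y_2|))\in\overline{R_2}\setminus\Gamma$. These two sets are disjoint, so the modulus system has no solution, which completes the proof. (An alternative is to recover the stratum — the value of $t$, or membership in $\interior\tilde V$ — continuously from the image point by invariance of domain applied to $i'\big|_{\{t>0\}}$ and $i''\big|_{\interior\tilde V}$, but this still reduces to the same disjointness of images.)
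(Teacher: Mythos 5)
The paper's proof is a short, soft topological argument: since $i''$ is injective and $\tilde V$ is compact, $i''(\tilde V)$ is a closed codimension-$0$ submanifold-with-boundary of $W''_0$; the radial path $\gamma(s)=(s\theta,x_1,y_1)$, $s\in[t,1]$, is mapped by $i'$ to a path starting inside $i''(\interior\tilde V)$ (if the assumed coincidence held) and ending outside $i''(\tilde V)$ (as the paper checks directly), so it must cross $i''(\partial\tilde V)=i'(F_0\cap\overline{W'})$, contradicting injectivity of $i'$ because $\gamma$ avoids $F_0$. You instead reduce to the system $a^*(t,|x_1|)=c^*(|x_2|,|y_2|)$, $b^*(t,|x_1|)=d^*(|x_2|,|y_2|)$ and attempt a planar separation argument with the arc $\Gamma$; your case reduction, the elimination of $|x_1|^2+|y_1|^2=1$, and the identification of $\Gamma$ as the common image of the seam are all correct, and your treatment of the $i'$-side (claim that $(a^*(t,p),b^*(t,p))$ lies in $\overline{R_1}\setminus\Gamma$ for $t>0$) can be made to work because $\alpha$ is strictly increasing, $\beta$ strictly decreasing along $\Gamma$, and $(a^*,b^*)$ strictly increases in $t$.

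The gap is on the $i''$-side. You justify $(c^*(q,r),d^*(q,r))\in\overline{R_2}\setminus\Gamma$ for $q+r<1$ by the inequalities $c^*\le q$, $d^*\le r$, $q+r<1$. But these only give $c^*+d^*<1$, which is equally true of every point of $R_1$; they say nothing about which side of $\Gamma$ the point lies on, so they do not place it in $\overline{R_2}$. This is a genuine missing step: to locate the image of $\interior\tilde V$ you need a Jordan-curve/invariance-of-domain argument (the injective map $(c^*,d^*)$ carries $\partial\{q+r\le1,\,q,r\ge0\}$ onto $\partial\overline{R_2}$, hence the interior onto $R_2$), which is precisely the kind of argument the paper makes once, directly in $W''_0$, by noting $i''(\tilde V)$ is closed of codimension $0$ and that a path from its interior to a point of its complement must meet its boundary. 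Your closing parenthetical gestures at this fix but does not carry it out, so as written the proposal does not close the argument.
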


\begin{proof}
Since $i'$ and $i''$ are injective, we only need to prove that $i'(t\theta,x_1,y_1) \neq i''(x_2,y_2)$ if $t>0$ and $|x_2|+|y_2|<1$. Assume indirectly that $i'(t\theta,x_1,y_1) = i''(x_2,y_2)$.

Since $i''$ is continuous, injective, and $\tilde{V}$ is compact, $i''(\tilde{V})$ is the homeomorphic image of $\tilde{V}$. So it is a closed codimension-0 submanifold (with boundary) in $W''_0$, therefore any path in $W''_0$ connecting a point of $i''(\interior \tilde{V})$ with a point outside $i''(\tilde{V})$ must intersect $i''(\partial \tilde{V})$.

Let $\gamma : [t,1] \rightarrow T \big| _G \cap \overline{W'}$, $\gamma(s)=(s\theta,x_1,y_1)$.
 
Then $i'(\gamma([t,1]))$ is a path between $i'(\gamma(t)) = i'(t\theta,x_1,y_1) = i''(x_2,y_2) \in i''(\interior \tilde{V})$ and $i'(\gamma(1)) = i'(\theta,x_1,y_1)$. The latter is not in $i''(\tilde{V})$, because if $|x|+|y|=1$, then $i''(x,y) = i'(0,x,y) \neq i'(\theta,x_1,y_1)$ (because $i'$ is injective), and if $|x| + |y| < 1$, then $|a(\theta,x_1,y_1)| + |b(\theta,x_1,y_1)| = |x_1| + |y_1| = 1 > |x|+|y| > |c(x,y)|+|d(x,y)|$, so $i'(\theta, \allowbreak x_1,y_1) \neq i''(x,y)$. 

Therefore $i'(\gamma([t,1]))$ intersects $i''(\partial \tilde{V}) = i'(F_0 \cap \overline{W'})$. But $\gamma([t,1])$ is disjoint from $F_0 \cap \overline{W'}$, and $i'$ is injective, so this is a contradiction.

This proves that $i$ is injective.
\end{proof}

To sum up, $i$ is a continuous bijection between $T \big| _G \cap W$ and $T \big| _G \cap W''_0$, and since $T \big| _G \cap W$ is compact, this is a homeomorphism.

For each $p \in N$ we can construct this homeomorphism $i$. Together they will form a homeomorphism $I : T \big| _S \cap W \rightarrow W''_0$. (This $I$ is continuous, because for any $k$, $i$ is defined by the same formula for every $p \in U_k$, so $I$ is continuous restricted to $T \big| _{S|_{U_k}} \cap W$.) We saw that $i$ is identical on the boundary, so this holds for $I$ too.

This finishes the proof of the existence of a suitable $I$.

Since $i$ maps a subset of $T \big| _G \approx D^2 \times D^4$ into another one, it is homotopic to the inclusion $T \big| _G \cap W \hookrightarrow T \big| _G$ through a linear homotopy. The linear homotopy is well-defined, and can be defined for all fibers $G$ of $\S$, so $I$ is homotopic to the inclusion $T \big| _S \cap W \hookrightarrow T \big| _S$.

The same argument proves that $I$ composed with the homeomorphism $W''_0 \rightarrow W''_2$ is homotopic to the inclusion, hence the homeomorphism $W \rightarrow W_2$ we constructed is homotopic to the inclusion $W \hookrightarrow T$. This finishes the proof of Theorem \ref{thm:smooth}.

\begin{proof}[Proof of Proposition \ref{prop:or2}]
If $Y_1$ is both oriented and co-oriented, then it defines a local orientation of $X$ at each of its points (by taking the orientations of the tangent space and normal space of $Y_1$). Similarly $Y_2$ defines a local orientation of $X$ at each point of $Y_2$. So both of them determine local orientations of $X$ at points of $Y_1 \cap Y_2 = M$. This is a deformation retract of $T$, and $T$ is a codimension-0 submanifold in $X$, therefore the local orientations of $X$ along $M$ correspond to orientations of $T$.

First suppose that there exists an oriented and co-oriented $Y$. This determines an orientation of $T$. It must coincide with the orientation induced by $Y_i$ (because in the points of $B_i$, the orientation and co-orientation of $Y$ coincide with those of $Y_i$). This holds for both $Y_i$, so they must define the same orientation of $T$.

Next suppose that $Y_1$ and $Y_2$ define the same orientation of $T$. As we saw, we can construct a co-oriented $W \subset T$ such that $Y = ((Y_1 \cup Y_2) \setminus T) \cup W$ is co-oriented. \big[In Case 1 this follows from Proposition \ref{prop:or1}.\big] The orientation of $T$ and the co-orientation of $W$ determine an orientation of $W$ (by the condition that this orientation and the co-orientation of $W$ must induce the given orientation of $T$). Then $Y$ is oriented, because at $B_i$ the orientations of $Y_i$ and $W$ coincide (they satisfy the same condition). Therefore in this case there exists an oriented and co-oriented $Y$.
\end{proof}

\section{A lower bound for the number of components of $M$} 

Let $X$ be a closed $n$-manifold and $Y_1, Y_2$ connected oriented codimension-1 submanifolds in $X$ representing the integral homology classes $[Y_1], [Y_2] \in H_{n-1}(X; \Z)$. Suppose that $Y_1$ and $Y_2$ intersect each other transversally in $M$. Let $|M|$ denote the number of connected components of $M$. 

Let $t : H_{n-1}(X) \rightarrow H_{n-1}(X) / \Tor(H_{n-1}(X)) = F$ be the natural homomorphism. We call an element $\tilde{\alpha} \in F$ primitive if $\tilde{\alpha} = k \tilde{\beta}$ ($k \in \Z$, $\tilde{\beta} \in F$) implies $|k| = 1$. $F$ is a free Abelian group, so for each non-zero $\tilde{\alpha} \in F$ there is a unique positive integer $k$ and a primitive element $\tilde{\beta} \in F$ such that $\tilde{\alpha} = k \tilde{\beta}$, we will denote this $k$ by $\DIV(\tilde{\alpha})$ (and $\DIV(0) = 0$). For an $\alpha \in H_{n-1}(X)$ let $\DIV(\alpha) = \DIV(t(\alpha))$, $\alpha$ is called primitive if $\DIV(\alpha) = 1$.

If $X$ is orientable, then $H_{n-1}(X)$ is free, so for each $\alpha \in H_{n-1}(X)$ there is a primitive $\beta$ such that $\alpha = \DIV(\alpha) \beta$. If $X$ is non-orientable, then $\Tor(H_{n-1}(X)) = \Z_2$, let $\sigma$ denote the order-2 element. In this case for each $\alpha$ there is a primitive $\beta$ such that $\alpha = \DIV(\alpha) \beta$ or $\alpha = \DIV(\alpha) \beta + \sigma$. If $\DIV(\alpha)$ is odd, then an equation of the second type can be rewritten to one of the first type, because $\DIV(\alpha) \beta + \sigma = \DIV(\alpha) (\beta + \sigma)$, and $(\beta + \sigma)$ is primitive too.

The following theorem was proved in Meeks--Patrusky ([1]) and Meeks ([2]):

\begin{thm}
If an embedded oriented codimension-1 submanifold represents $\alpha \in H_{n-1}(X)$, then it has at least $C(\alpha)$ connected components, where
\[
C(\alpha) =
\begin{cases}
\DIV(\alpha) & \text{if $X$ is orientable} \\
\left \lfloor \frac{\DIV(\alpha)}{2} \right \rfloor & \text{if $X$ is non-orientable \linebreak and $\alpha = \DIV(\alpha) \beta$ for a primitive $\beta$} \\
\frac{\DIV(\alpha)}{2} + 1 & \text{if $X$ is non-orientable, $\DIV(\alpha)$ is even and $\alpha = \DIV(\alpha) \beta + \sigma$ } \\
& \text{for a primitive $\beta$}
\end{cases}
\]
\end{thm}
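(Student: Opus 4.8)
The plan is to reduce the statement to a combinatorial fact about the graph formed by the components of the submanifold and of its complement, read off the orientable case from it directly, and then push the non-orientable cases through with extra bookkeeping coming from the orientation double cover. Write $Z$ for the given oriented codimension-$1$ submanifold and $Z_1,\dots,Z_c$ for its components, so $\alpha=\sum_{i=1}^{c}[Z_i]$. The first step is to note that $\DIV([Z_i])\le 1$ for every $i$: if $Z_i$ is one-sided, the boundary of a twisted $I$-bundle neighbourhood of $Z_i$ is a connected double cover of $Z_i$ which is null-homologous and of degree $2$ over $Z_i$, so $2[Z_i]=0$ and $\DIV([Z_i])=0$; if $Z_i$ is two-sided and separates $X$ then $[Z_i]=0$; and if $Z_i$ is two-sided and non-separating, a short transverse arc through $Z_i$ closed up inside the connected complement $X\setminus Z_i$ gives a loop meeting $Z_i$ in a single point, so $[Z_i]$ pairs to $\pm1$ with some homomorphism to $\Z$ and $\DIV([Z_i])=1$. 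I would then set up the combinatorial skeleton: let $G$ be the graph with one vertex for each component of $X\setminus Z$ and one edge for each $Z_i$ (a one-sided $Z_i$ giving a loop at a single vertex), directed, where possible, by the co-orientation. Since $X$ is connected, $G$ is connected, hence has at most $c$ edges and $c+1$ vertices, and the moment $G$ contains a cycle it has at least as many edges as vertices.

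For the orientable case, Poincaré duality identifies $H^1(X;\Z)$ with $H_{n-1}(X;\Z)$, both free, and $d:=\DIV(\alpha)$ is the positive generator of the image of $u:=\mathrm{PD}^{-1}(\alpha)\colon H_1(X;\Z)\to\Z$. Every value $\ell\cdot Z$ of this homomorphism is realised by a loop $\ell$ that can be pushed into the edges and vertices of $G$ without changing its intersection number with $Z$, and on such a loop $\ell\cdot Z$ equals the signed number of directed edge-traversals, i.e.\ the value on the cycle class of $\ell$ of the ``sum of all edges'' cochain on $G$. Thus $d$ is the gcd of these signed edge-sums over the cycle lattice $Z_1(G)$; expanding in a basis of fundamental cycles of a spanning tree, each fundamental cycle uses at most (number of vertices) edges, so if $d>0$ then $G$ has a cycle, whence the number of edges is at least the number of vertices and $d$ is at most the number of vertices, which is $\le c$. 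Therefore $c\ge d=C(\alpha)$.

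For the non-orientable cases I would separate the $a$ one-sided components from the $b=c-a$ two-sided ones. The one-sided components carry $2$-torsion classes, so the torsion-free part $t(\alpha)$ is carried entirely by the two-sided ones, and intersection with their union defines a class $v\in H^1(X;\Z)=\mathrm{Hom}(H_1(X;\Z),\Z)$; the graph argument above, applied to the two-sided sub-collection of edges of $G$, bounds $\DIV(v)\le b\le c$. What remains is to compare $\DIV(v)$ with $d=\DIV(\alpha)$: now the Poincaré dual of $\alpha$ lives in the twisted group $H^1(X;\Z_w)$ rather than in $H^1(X;\Z)$, and passing between the two — most transparently through the orientation double cover $p\colon\widetilde X\to X$, in which a two-sided component splits into two and a one-sided one stays connected, so $p^{-1}(Z)$ has at most $2c$ components — costs a factor of $2$, which gives $c\ge\lfloor d/2\rfloor$. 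The extra $+1$ in the third case is a parity phenomenon: when $\alpha=\DIV(\alpha)\beta+\sigma$, one must additionally account for the class $\sigma$ (equivalently for $H_0(X;\Z_w)=\Z_2$), and this forces one more component than the bare divisibility count yields.

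The hard part will be exactly this non-orientable divisibility comparison: pinning down the factor of $2$ relating $\DIV(v)$ to $d$ across the transfer of the double cover while keeping careful track of the $2$-torsion, and extracting the sharp $+1$ in the mixed case. This is the step at which the argument must go beyond the orientable case, and it is where I would expect Meeks's treatment to diverge from the orientable argument of Meeks--Patrusky.
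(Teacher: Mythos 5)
The paper does not prove this statement; it cites it as a known result of Meeks--Patrusky~[1] (orientable case) and Meeks~[2] (non-orientable case) and then uses it as a black box in the proofs of Theorems~\ref{thm:lb1} and~\ref{thm:lb2}. So there is no in-paper argument to compare against, and I am judging your proposal on its own merits. Your orientable argument is sound and, as far as I can tell, close in spirit to [1]: the essential point is that the intersection cochain on the connectivity graph $G$ takes values $\pm1$ on edges, every class in $Z_1(G)$ is hit by a loop in $X$, and a fundamental cycle of a spanning tree has at most $v$ edges, so $\DIV(\alpha)\le v\le c$ once a cycle exists. It is worth emphasising why this graph-structural step is indispensable: the bare assertion ``a sum of $c$ primitive classes has divisibility at most $c$'' is false in an abstract free abelian group (e.g.\ $(1,1)+(2,-1)=(3,0)$ in $\Z^2$ has divisibility $3$ from two primitives), so the bound genuinely comes from the geometry of the complement.

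The non-orientable half is, as you yourself flag, not a proof, and the gaps are real. Restricting to the two-sided components does not leave a connected subgraph of $G$, so the ``edges $\ge$ vertices'' counting does not transfer verbatim; you would have to reformulate the graph so that the one-sided components are absorbed into the vertices, and redo the bound from scratch. The double-cover route is also delicate: one has to pin down exactly what class $p^{-1}(Z)$ represents (the transfer $p^{!}$ kills $\sigma$ but one must check what happens to the orientations of the two sheets over a two-sided $Z_i$ under the orientation-reversing deck transformation), and a naive application seems to give $c\ge\lceil\DIV(\alpha)/2\rceil$ in case two, which does not match the stated $\lfloor\DIV(\alpha)/2\rfloor$ and so needs reconciling rather than hand-waving. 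Finally, the $+1$ of case three --- the interaction between the $\Z_2$-torsion class $\sigma$ and divisibility --- is where Meeks's argument must do genuinely different work, and your proposal offers no mechanism for it beyond naming it as a ``parity phenomenon''. Until these points are filled in, the non-orientable cases remain unproved.
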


Using this theorem we can prove the following:

\begin{thm} \label{thm:lb1}
If $C([Y_1] + [Y_2]) \geq 3$, then $|M| \geq C([Y_1] + [Y_2]) - 1$. 
\end{thm}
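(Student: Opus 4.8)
The plan is to combine the explicit sum-construction of Theorems~\ref{thm:smooth} and Proposition~\ref{prop:or1} with the Meeks--Patrusky--Meeks lower bound. Starting from the connected oriented submanifolds $Y_1$ and $Y_2$ meeting transversally in $M$, Proposition~\ref{prop:or1} produces an oriented embedded codimension-1 submanifold $Y$ with $[Y]=[Y_1]+[Y_2]$; moreover, from the construction $Y$ coincides with $(Y_1\cup Y_2)\setminus T$ outside the tubular neighbourhood $T$ of $M$, and inside $T$ it is obtained from the bundle $W$ whose fibre is $V\approx D^1\times S^0$ (two arcs per fibre). Applying the Meeks--Patrusky--Meeks theorem to $Y$ gives $|Y_0|\ge C([Y_1]+[Y_2])$ for the number of components $|Y_0|$ of $Y$. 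So it suffices to bound $|Y_0|$ from above in terms of $|M|$: concretely I would aim to show $|Y_0|\le |M|+2$, or more precisely $|Y_0|\le |M|+(\text{number of components of }Y_1)+(\text{number of components of }Y_2)=|M|+2$, and then combine.

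First I would analyse the combinatorics of how components merge. Think of $Y$ as built by cutting $Y_1\cup Y_2$ along $B=B_1\sqcup B_2$ (the boundary circles/spheres of the removed tube pieces) and regluing via $W$. Each component of $M$ contributes one connected piece $F\cap W$ in the normal slice which locally reconnects the four ``quadrant'' pieces of $Y_1\cup Y_2$ around that component into (generically) two sheets --- this is exactly the picture of Figure~1, where $D^1\times\{0\}\cup\{0\}\times D^1$ is replaced by two disjoint arcs. So near each component of $M$ the local surgery can only \emph{decrease} the number of connected components, and it does so in a controlled way: each component of $M$ is a codimension-1 submanifold of both $Y_1$ and $Y_2$, and performing the regluing along one component of $M$ changes the component count of the current submanifold by at most one (it either connects two previously distinct components or disconnects one into two; in the ``two disjoint arcs'' configuration the change is by at most $\pm 1$). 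Iterating over all $|M|$ components of $M$, and starting from the $2$ components $Y_1,Y_2$ (before removing tubes $Y_1\cup Y_2$ has $2$ components), we get $|Y_0|\le 2+|M|$; but we need the sharper inequality.

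The sharper bound should come from a handle/Euler-characteristic or dual-graph argument. Form the graph $\Gamma$ whose vertices are the components of $Y_1$ and $Y_2$ (two vertices) together with the components of $M$, with an edge joining a component $M_j$ of $M$ to $Y_i$ whenever $M_j\subset Y_i$ (so every $M_j$-vertex has exactly two incident edges, one to $Y_1$ and one to $Y_2$). Then $Y$ is, up to the local reconnections, obtained by a graph surgery on $\Gamma$, and $|Y_0|$ is bounded by $b_0(\Gamma)$ plus the first Betti number $b_1(\Gamma)$ of $\Gamma$ --- because each independent cycle in $\Gamma$, when the reconnection is performed, can at worst split off one extra component. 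Since $\Gamma$ is connected (all of $M$ meets both $Y_1$ and $Y_2$, which are themselves connected) one has $b_0(\Gamma)=1$ and $b_1(\Gamma)=1-V+E=1-(2+|M|)+2|M|=|M|-1$, so $|Y_0|\le b_0(\Gamma)+b_1(\Gamma)=|M|$. Hence $C([Y_1]+[Y_2])\le |Y_0|\le |M|$...

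Wait --- this would give $|M|\ge C([Y_1]+[Y_2])$ directly, and the theorem only claims $|M|\ge C([Y_1]+[Y_2])-1$. The discrepancy is precisely the off-by-one that the hypothesis $C([Y_1]+[Y_2])\ge 3$ is there to absorb: the crude argument overcounts by potentially one, because one of the reconnections (the ``last'' cycle-closing one) may fail to split, or one component of $Y$ may be absorbed into the $(Y_1\cup Y_2)\setminus T$ part in a way that is not visible to $\Gamma$. So the honest version of the argument is: a careful bookkeeping of the local pictures (Figures~1--4, together with the smoothing in Theorem~\ref{thm:smooth}) shows $|Y_0|\le |M|+1$, hence $C([Y_1]+[Y_2])\le |M|+1$, i.e. $|M|\ge C([Y_1]+[Y_2])-1$, which is the claim.

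\medskip

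I expect the main obstacle to be exactly this last step: turning the intuitive ``each reconnection changes the component count by at most one, and cycles in the incidence graph can cost at most one extra component'' picture into a rigorous bound $|Y_0|\le |M|+1$, handling non-generic fibre configurations (the $\tilde V$ ``square'' fibres over $N$ from Lemma~\ref{lem:loctriv}) and making sure the smoothing of corners along $B$ does not create or destroy components. The hypothesis $C([Y_1]+[Y_2])\ge 3$ presumably also rules out small degenerate cases (e.g. when $[Y_1]+[Y_2]$ is torsion or of small divisibility, where $Y$ could a priori be connected or empty and the counting breaks down), so part of the work is checking that under this hypothesis $Y$ is non-empty and the graph-theoretic model is valid.

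Here is the plan in order:

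\textbf{Step 1.} Invoke Proposition~\ref{prop:or1} to get an oriented embedded codimension-1 submanifold $Y$ with $[Y]=[Y_1]+[Y_2]$, equal to $(Y_1\cup Y_2)\setminus T$ outside $T$ and built from the fibrewise-two-arcs bundle $W$ inside $T$.

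\textbf{Step 2.} Apply the Meeks--Patrusky--Meeks theorem to $Y$: $|Y_0|\ge C([Y_1]+[Y_2])$, where $|Y_0|$ is the number of components of $Y$.

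\textbf{Step 3.} Build the incidence graph $\Gamma$ (vertices: $Y_1$, $Y_2$, and the components of $M$; edges: $M_j$--$Y_i$ when $M_j\subset Y_i$), observe it is connected with $b_1(\Gamma)=|M|-1$.

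\textbf{Step 4.} Show, via the explicit local models of the construction (Figures~1--2 and Lemma~\ref{lem:loctriv}), that passing from $Y_1\cup Y_2$ to $Y$ corresponds to a surgery on $\Gamma$ in which each component of $M$ reconnects its two adjacent sheets, so that $|Y_0|\le b_0(\Gamma)+b_1(\Gamma)+(\text{correction})$; carry out the bookkeeping carefully, using $C([Y_1]+[Y_2])\ge 3$ to control the degenerate cases, to conclude $|Y_0|\le |M|+1$.

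\textbf{Step 5.} Combine Steps 2 and 4: $C([Y_1]+[Y_2])\le |Y_0|\le |M|+1$, hence $|M|\ge C([Y_1]+[Y_2])-1$.
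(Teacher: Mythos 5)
Your Steps 1--2 match the paper exactly: build $Y$ via Proposition~\ref{prop:or1} and apply the Meeks--Patrusky--Meeks bound. The gap is in Steps 3--4. Your graph $\Gamma$ (two ``big'' vertices $Y_1,Y_2$ plus one vertex per component of $M$) is too coarse, and the proposed inequality $|Y_0|\le b_0(\Gamma)+b_1(\Gamma)$ is never actually justified --- indeed you notice yourself that it would give $|Y_0|\le|M|$, which contradicts the intended bound and signals the model is wrong. The reconnections in the surgery happen between components of $Y_i\setminus M$, which your graph does not see at all (both sides of each $M_j$ inside $Y_i$ are collapsed to the single vertex $Y_i$), so ``each cycle costs at most one extra component'' has no content here. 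Writing ``the honest version of the argument shows $|Y_0|\le|M|+1$'' without proof is precisely the missing step.

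The paper's graph is finer: its vertices are the connected components of $Y_1\setminus M$ and of $Y_2\setminus M$, with each $M'\subset M$ contributing an edge (or a ``special'' loop when its normal bundle in $Y_i$ is nontrivial) inside each part $G_i$. Then a second graph $G'$ is built on the same vertex set, with edges recording which pieces of $Y_1\setminus M$ get glued to which pieces of $Y_2\setminus M$ by $W\cap T\big|_{M'}$; the key point is that $G'$ is \emph{bipartite} and its connected components are \emph{exactly} the connected components of $Y$. Since every vertex of $G'$ has degree $\ge 1$ (this is where $C\ge 3$ is used, via ``$M\ne\emptyset$, else $Y_1\sqcup Y_2$ would represent the sum with $\le 2$ components''), the number of components of a bipartite graph with both sides nonempty is at most the size of either side; and $G_i$ being connected with $|M|$ edges has at most $|M|+1$ vertices. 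That gives $C([Y_1]+[Y_2])\le|Y_0|\le|M|+1$ cleanly. To repair your proof, replace $\Gamma$ by this finer graph and run the bipartite vertex-count rather than a Betti-number estimate; the Euler-characteristic heuristic does not directly control the component count of the surgered manifold.
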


\begin{proof}
We construct a graph $G$. The vertices of $G$ will be the connected components of $Y_1 \setminus M$ and $Y_2 \setminus M$. Each connected component $M'$ of $M$ will correspond to two edges. If the normal bundle of $M'$ in $Y_1$ is trivial, then it is contained in the boundaries of two connected components of $Y_1 \setminus M$, and $G$ will contain an edge between these two components (these components may be equal, so this edge may be a loop). If the normal bundle is non-trivial, then $M'$ is contained in the boundary of only one component, and $G$ will contain a loop on the corresponding vertex, such loops will be called special. Similarly, $M'$ determines an edge between the connected components of $Y_2 \setminus M$. The submanifolds $Y_1$ and $Y_2$ are connected, so $G$ has two connected components, $G_1$ and $G_2$, where $G_i$ contains the vertices corresponding to connected components of $Y_i \setminus M$.

We now prove that there are no vertices of degree $0$ in $G$. If $G_i$ contained such a vertex, then it would contain only this vertex (because it is connected) and no edges, so $M$ would be empty. Then $Y_1 \bigsqcup Y_2$ would represent $[Y_1] + [Y_2]$, so $C([Y_1] + [Y_2]) \leq 2$, which contradicts the theorem's assumption.

We apply the construction in the proof of Proposition \ref{prop:or1} to obtain a $Y$ representing $[Y_1] + [Y_2]$. We define another graph $G'$ with the property that its connected components are in a bijection with the connected components of $Y$. The vertices of $G'$ will be those of $G$. Again, edges will be defined for each connected component $M'$ of $M$, we will need to consider 3 cases:

First suppose that $M'$ corresponds to the non-special edges $x_1x_2$ and $y_1y_2$ in $G_1$ and $G_2$ (these may be loops). When we construct $Y$, we replace a neighbourhood of $M'$ in $Y_1 \cup Y_2$ by $W \cap T \big| _{M'}$, which joins $x_1$ with $y_1$ and $x_2$ with $y_2$, or $x_1$ with $y_2$ and $x_2$ with $y_1$ (more precisely, we should write here $x_1 \setminus T$ instead of $x_1$, etc.). The graph $G'$ will then contain the edges $x_1y_1$ and $x_2y_2$, or $x_1y_2$ and $x_2y_1$.

If $M'$ corresponds to a special loop on $x$ in $G_1$ and a non-special edge $y_1y_2$ in $G_2$, then $x \setminus T$, $y_1 \setminus T$ and $y_2 \setminus T$ will be in the same connected component of $Y$, so $G'$ will contain the edges $xy_1$ and $xy_2$. The case of a special loop in $G_2$ and a non-special edge in $G_1$ is similar.

If $M'$ corresponds to special loops on $x$ and $y$ in $G_1$ and $G_2$, then $x \setminus T$ and $y \setminus T$ will be in the same connected component of $Y$, so $G'$ will contain the edge $xy$.

The connected components of $G'$ correspond to the connected components of $Y$. By the theorem of Meeks, $Y$ has at least $C([Y_1] + [Y_2])$ connected components, so this is also true for $G'$. But $G'$ is a bipartite graph, and each of its vertices has degree at least $1$ (because this is true in $G$, and if a vertex is contained in an edge in $G$, then it will be contained in an edge in $G'$), so the number of connected components is at most the number of vertices in either $G_1$ or $G_2$. Since $G_i$ is connected in $G$, and contains $|M|$ edges, it contains at most $|M|+1$ vertices. This means that $C([Y_1] + [Y_2]) \leq |M|+1$. 
\end{proof}

The following generalisation also holds:

\begin{thm} \label{thm:lb2}
If $a, b \in \Z \setminus \{ 0 \}$ and $C(a [Y_1] + b [Y_2]) > |a| + |b|$, then 
\[
|M| \geq \frac{C(a [Y_1] + b [Y_2]) - \min(|a|,|b|)}{|ab|}.
\] 
\end{thm}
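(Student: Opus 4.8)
The plan is to run the proof of Theorem \ref{thm:lb1} again, but starting from $|a|$ parallel copies of $Y_1$ and $|b|$ parallel copies of $Y_2$ rather than single copies. Fix pairwise disjoint parallel copies $Y_1^{(1)},\dots,Y_1^{(|a|)}$ of $Y_1$ in a collar, oriented like $Y_1$ if $a>0$ and oppositely if $a<0$, and set $Z_1=\bigsqcup_i Y_1^{(i)}$, so that $[Z_1]=a[Y_1]$; do the same for $Y_2$ to obtain $Z_2$ with $[Z_2]=b[Y_2]$. Choosing the copies $C^1$-close to $Y_1,Y_2$ we may assume $Z_1\pitchfork Z_2$ and that each $Y_1^{(i)}\cap Y_2^{(j)}$ is diffeomorphic to $M$, so $\mathcal M:=Z_1\cap Z_2$ has exactly $|a|\,|b|\,|M|$ connected components. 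Note $\mathcal M=\emptyset$ iff $M=\emptyset$, in which case $Z_1\sqcup Z_2$ would be a representative of $a[Y_1]+b[Y_2]$ with only $|a|+|b|$ components, contradicting the hypothesis $C(a[Y_1]+b[Y_2])>|a|+|b|$; hence $M\neq\emptyset$, and we may in addition arrange all $Y_1^{(i)}\cap Y_2^{(j)}\neq\emptyset$ (take the copies close enough near a point of $M$). Applying Proposition \ref{prop:or1} to the oriented transverse pair $(Z_1,Z_2)$ now produces an oriented embedded submanifold $Y$ with $[Y]=[Z_1]+[Z_2]=a[Y_1]+b[Y_2]$.

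Next I would form the graph $G$ exactly as in Theorem \ref{thm:lb1}: vertices are the components of $Z_1\setminus\mathcal M$ and of $Z_2\setminus\mathcal M$, and each component of $\mathcal M$ contributes one edge (a loop or a special loop when appropriate) in the $Z_1$-part and one in the $Z_2$-part. Write $G=G_1\sqcup G_2$. The new feature is that $Z_1$ is disconnected, so $G_1=\bigsqcup_{i=1}^{|a|}G_1^{(i)}$, where $G_1^{(i)}$ is built from $Y_1^{(i)}$; since $Y_1^{(i)}$ is connected and $Y_1^{(i)}\cap\mathcal M=\bigsqcup_j(Y_1^{(i)}\cap Y_2^{(j)})$ has $|b|\,|M|$ components, $G_1^{(i)}$ is connected with $|b|\,|M|$ edges and hence at most $|b|\,|M|+1$ vertices. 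Summing over $i$ gives $|V(G_1)|\le |ab|\,|M|+|a|$, and symmetrically $|V(G_2)|\le |ab|\,|M|+|b|$. Moreover every vertex of $G$ has degree at least $1$: a component of $Z_1\setminus\mathcal M$ lies in a single $Y_1^{(i)}$, which is connected and meets $\mathcal M$, so that component has part of $\mathcal M$ in its closure.

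Then I would build the bipartite graph $G'$ on the same vertex set, with parts $V(G_1)$ and $V(G_2)$, recording for each component $M'$ of $\mathcal M$ how $W|_{M'}$ (from the construction of Proposition \ref{prop:or1}) joins the adjacent pieces of $Z_1\setminus\mathcal M$ and $Z_2\setminus\mathcal M$, in the three cases (two non-special edges; one special loop and one non-special edge; two special loops) treated verbatim as in the proof of Theorem \ref{thm:lb1}; its connected components are in bijection with those of $Y$. As there, every vertex of $G'$ has degree at least $1$, so $G'$ being bipartite, the number of its components is at most the size of the smaller part, i.e. at most $\min(|V(G_1)|,|V(G_2)|)=|ab|\,|M|+\min(|a|,|b|)$. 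On the other hand, by the Meeks--Patrusky theorem the oriented submanifold $Y$ has at least $C(a[Y_1]+b[Y_2])$ components, hence so does $G'$. Combining, $C(a[Y_1]+b[Y_2])\le |ab|\,|M|+\min(|a|,|b|)$, which rearranges to the asserted bound.

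Most of this is routine: the genericity statements ($Z_1\pitchfork Z_2$ and $Y_1^{(i)}\cap Y_2^{(j)}\cong M$) and the re-use of the graph-to-manifold dictionary of Theorem \ref{thm:lb1}. The step needing real care — the analogue of the ``$M\neq\emptyset$'' step — is verifying that, after passing to the disconnected $Z_1,Z_2$, the graph $G'$ still has no isolated vertices and that the vertex count of each $G_i$ is controlled by its edge count; this is precisely where the hypothesis $C(a[Y_1]+b[Y_2])>|a|+|b|$ is used. One should also note that if some $Y_i$ has non-trivial normal bundle then $|a|$ (or $|b|$) genuinely disjoint parallel copies do not exist; in that case the relevant instance is either vacuous or requires replacing the disjoint copies by a small transverse perturbation, and the same bookkeeping then applies.
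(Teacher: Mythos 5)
Your outline is the same as the paper's: replace $Y_1,Y_2$ by representatives of $a[Y_1],b[Y_2]$ with controlled numbers of components, re-run the graph argument of Theorem \ref{thm:lb1} with the connectivity hypotheses weakened, and use that $G_i$ has at most $|a|$ (resp.\ $|b|$) connected components to get $|V(G_i)|\le|\tilde M|+|a|$ and a bound on the number of components of the bipartite $G'$. That part, and the $M\neq\emptyset$ step, match the paper.

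The gap is exactly at the point you flag in your last paragraph, and your proposed patch does not close it. When $X$ is non-orientable the (oriented) hypersurface $Y_1$ can have non-trivial normal bundle, and then \emph{no} parallel disjoint copy of $Y_1$ exists: any nearby perturbation transverse to $Y_1$ must actually cross $Y_1$, so the pieces $Y_1^{(1)},\dots,Y_1^{(|a|)}$ would self-intersect and $Z_1$ would not be an embedded submanifold. The case is not vacuous and ``the same bookkeeping'' cannot be applied to a non-embedded $Z_1$. The paper's fix is to take, for $\lfloor|a|/2\rfloor$ distinct small radii $\varepsilon$, the $S^0_\varepsilon$-bundle of the normal bundle of $Y_1$ (the boundary of the radius-$\varepsilon$ tubular neighbourhood), together with $Y_1$ itself when $a$ is odd. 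Each $S^0_\varepsilon$-bundle is an embedded oriented submanifold (orient it by pulling back the orientation of $Y_1$ along the $2$-fold covering projection) representing $2[Y_1]$, it has at most $2$ components, and it is a double cover of $Y_1$ when the normal bundle is non-trivial. This gives a $\tilde Y_1$ representing $a[Y_1]$ with at most $|a|$ components in all cases. One then only has $|\tilde M|\le|ab|\,|M|$ rather than equality (some components of $\tilde M$ may double-cover components of $M$), but that inequality is all that is used, so the counting you carry out goes through once the construction of $\tilde Y_1,\tilde Y_2$ is replaced by this one.
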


\begin{proof}
First we construct a $\tilde{Y}_1$ from $Y_1$. The normal bundle of $Y_1$ is an $\R^1$-bundle, so we can take its $S^0_{\varepsilon}$-bundle (this contains the points $\pm \varepsilon$ of each fiber). The new submanifold $\tilde{Y}_1$ will consist of $\left \lfloor \frac{|a|}{2} \right \rfloor$ copies of such an $S^0_{\varepsilon}$-bundle (ie.\ we construct this for $\left \lfloor \frac{|a|}{2} \right \rfloor$ different values of $\varepsilon$), and also $Y_1$ if $a$ is odd. The projection of an $S^0_{\varepsilon}$-bundle to $Y_1$ is a 2-fold cover, so this bundle has at most 2 connected components, so $\tilde{Y}_1$ has at most $|a|$ of them. If $a>0$ then they are given the same orientation as $Y_1$ (more precisely, the orientation of an $S^0_{\varepsilon}$-bundle is the pullback of the orientation of $Y_1$ by the projection), if $a < 0$, then $\tilde{Y}_1$ is given the opposite orientation. Thus the $\tilde{Y}_1$ we constructed represents $a [Y_1]$. We can construct similarly a $\tilde{Y}_2$ that represents $b [Y_2]$.

Let $\tilde{M} = \tilde{Y}_1 \cap \tilde{Y}_2$, it contains at most $|ab|$ connected components for each component of $M$, so $|\tilde{M}| \leq \left| ab \right| \left| M \right|$. The proof of Theorem \ref{thm:lb1} works for $\tilde{Y}_1$ and $\tilde{Y}_2$ with the following modifications:

The subgraphs $G_1$ and $G_2$ are not necessarily connected, but they contain at most $|a|$ and $|b|$ connected components respectively. The graph $G$ contains a vertex of degree 0 only if $M = \emptyset$, but in this case $\tilde{Y}_1 \bigsqcup \tilde{Y}_2$ represents $a [Y_1] + b [Y_2]$, and it has at most $|a| + |b|$ components, so $C(a [Y_1] + b [Y_2]) \leq |a| + |b|$, which contradicts our assumption. The number of vertices of $G_1$ is at most $|\tilde{M}|+|a|$, and the number of vertices of $G_2$ is at most $|\tilde{M}|+|b|$, therefore $G'$ has at most $|\tilde{M}| + \min(|a|,|b|)$ connected components. This means that $C(a [Y_1] + b [Y_2]) \leq \left| ab \right| \left| M \right| + \min(|a|,|b|)$.
\end{proof}

\section*{Bibliography}

\noindent
{[1]} W Meeks, J Patrusky, Representing codimension-one homology classes by embedded submanifolds, Pacific J. Math., vol 68 (1977) 175-176 \\
\\
{[2]} W H Meeks, III, Representing codimension-one homology classes on closed nonorientable manifolds by submanifolds, Illinois J. Math. 23 (1979) 199–210 \\
\\
{[3]} R. Thom, Quelques propri\'et\'es globales des vari\'et\'es diff\'erentiables, Comment. Math. Helv. 29 (1954) 17–85.

\end{document}